\documentclass[12pt]{article}

\usepackage[a4paper,margin=28mm]{geometry}
\usepackage{amsmath,amssymb,amsthm,mathtools,mathrsfs,cite}
\usepackage{bbm}
\usepackage{enumitem}
\usepackage{microtype}
\usepackage[hidelinks]{hyperref}
\hypersetup{
  pdftitle={Dirichlet-Constrained Random Operators and Quenched Reflection Quantum Walks: Tree Spectra and Poisson Criticality at High Degree},
  pdfauthor={Nino Nagata, Ryoichi Suzuki},
  pdfkeywords={random operators on trees; Dirichlet random environment; Poisson-weighted tree; reflection quantum walk; semicircle law; high-degree transition}
}

\newtheorem{theorem}{Theorem}[section]
\newtheorem{proposition}[theorem]{Proposition}
\newtheorem{corollary}[theorem]{Corollary}
\newtheorem{lemma}[theorem]{Lemma}
\theoremstyle{definition}
\newtheorem{definition}[theorem]{Definition}
\theoremstyle{remark}
\newtheorem{remark}[theorem]{Remark}

\newcommand{\T}{\mathbb{T}}
\newcommand{\C}{\mathbb{C}}
\newcommand{\R}{\mathbb{R}}
\newcommand{\E}{\mathbb{E}}
\newcommand{\Pp}{\mathbb{P}}
\newcommand{\1}{\mathbbm{1}}
\newcommand{\Tr}{\operatorname{Tr}}
\newcommand{\supp}{\operatorname{supp}}
\newcommand{\rank}{\operatorname{rank}}
\newcommand{\Cat}{\operatorname{Cat}}
\newcommand{\dd}{\,\mathrm{d}}
\newcommand{\Imag}{\operatorname{Im}}
\newcommand{\N}{\mathbb{N}}
\newcommand{\edg}{\mathfrak{e}}

\title{Dirichlet-Constrained Random Operators and Quenched Reflection Quantum Walks:\\
Tree Spectra and Poisson Criticality at High Degree}
\author{%
Nino Nagata$^{1}$ and Ryoichi Suzuki$^{2,*}$\\[0.55em]
\small $^{1}$Department of Mathematical Sciences, College of Science and Engineering,\\
\small Ritsumeikan University, Kusatsu, Shiga 525-8577, Japan\\
\small $^{2}$School of Management, Tokyo University of Science, Tokyo 102-0071, Japan\\[0.35em]
\small $^{*}$Corresponding author:
\href{mailto:rsuzukimath@gmail.com}{\texttt{rsuzukimath@gmail.com}}\\[0.35em]
\footnotesize ORCID: Nino Nagata 0009-0009-5064-0499;\\
\footnotesize Ryoichi Suzuki 0000-0001-9979-1882}
\date{}

\begin{document}
\maketitle

\begin{abstract}
We study spectral limits of random operators whose incident edge weights
are coupled by local normalization, and the reflection quantum walks
associated with them. At each vertex of a \(d\)-regular graph, an
independent Dirichlet probability vector distributes unit mass among
outgoing directions. At fixed degree, the empirical spectral measures
of the associated self-adjoint operators converge almost surely on
locally tree-like graph sequences. We give a closed-walk formula for
every moment of the limiting tree law.

Our main result identifies two distinct high-degree scales. After
rescaling the deterministic expected root spectral law by \(\sqrt d\),
a weak limit exists if and only if \(d\alpha_d^2\) converges in
\([0,\infty]\), where \(\alpha_d\) is the Dirichlet parameter. The limits range from a
point mass at zero through non-semicircular critical laws to the
semicircle law. The critical laws admit a Poisson-tree representation
with an explicit Bessel intensity. The extended family, including both
endpoint laws, provides a first-Wasserstein approximation uniform over
positive Dirichlet parameters.

In the regime \(d\alpha_d^2\to0\), the rescaled law converges weakly
to a point mass although its second moment remains one. Weighting
spectral mass by its contribution to this moment and rescaling again
reveals a different family of limits, classified by \(d\alpha_d\).
These limits have explicit Beta-product descriptions. A quantitative
comparison transfers the variance-scale classification to the
centered and rescaled inherited eigenphases of the quantum walk.
\end{abstract}

\noindent\textit{Keywords}: random operators on trees; Dirichlet random
environment; Poisson-weighted tree; reflection quantum walk; semicircle law;
high-degree transition.

\medskip
\noindent\textit{Mathematics Subject Classification (2020)}:
Primary 47B80; Secondary 05C80, 15B52, 60B20, 60K37, 81Q35.

\section{Introduction}\label{sec:introduction}

We study how a local normalization constraint affects the spectrum of a
random operator on a graph. At each vertex, a random probability vector
distributes unit mass among the outgoing directions. An edge weight uses
one mass from each endpoint. Edges meeting at a vertex are therefore
dependent, even when the vertex vectors are sampled independently. The
same construction defines a reflection quantum walk in an environment that
is sampled once and then kept fixed.

Let \(G\) be a \(d\)-regular graph, with \(d\geq3\). At each vertex \(v\),
choose independently
\[
 X_v=(X_{v,u})_{u\sim v}
 \sim \operatorname{Dirichlet}(\alpha,\ldots,\alpha),
 \qquad \alpha>0.
\]
The coordinate \(X_{v,u}\) is the mass assigned at \(v\) to its neighbor
\(u\). For fixed \(d\), as \(\alpha\downarrow0\), a vertex places nearly
all its mass in one randomly chosen direction. As \(\alpha\to\infty\),
the masses approach the uniform allocation. In the positive-weight case, the operator has
zero diagonal and edge coefficients
\[
 T_{v,u}=\sqrt{X_{v,u}X_{u,v}},\qquad u\sim v.
\]
It is sparse and self-adjoint, unlike the dense Dirichlet Markov matrices
studied in \cite{Chafai2010}. The full model also allows complex phases.
On a tree these phases can be removed by a diagonal unitary transformation,
so they do not change the tree spectral law.

Choose amplitudes \(q_{v,u}\) with \(|q_{v,u}|^2=X_{v,u}\). Then
\(q_v=(q_{v,u})_{u\sim v}\) is a unit vector and defines a local reflection
coin. Applying these reflections and then reversing the directed edges
gives the quantum walk.
The environment remains fixed throughout the evolution; this is the
\emph{quenched} setting.

The associated self-adjoint operator \(T\) is called the \emph{discriminant}. Its spectral values give the
\emph{inherited} part of the unitary spectrum through
\(x\mapsto x\pm i\sqrt{1-x^2}\) \cite{SegawaSuzuki2019}.
When the amplitudes are chosen positive, \(X_{v,u}\) is also a classical
transition probability in a
Dirichlet random environment \cite{SabotTournier2017}, and the quantum
walk is its reflection/Szegedy-type quantization
\cite{PortugalSegawa2017,Szegedy2004}.

The central question is how the local dependence changes the spectral
behavior as the degree grows. Our main result identifies two distinct
scales for the deterministic expected root spectral laws. The first
classifies their variance-normalized weak limits. In the regime where
this limit collapses to a point mass at zero, the second classifies
the spectral mass carrying the still nonzero second moment. Thus
different Dirichlet parameter sequences can have the same ordinary
spectral limit but different rescaled, second-moment-weighted limits.

\paragraph{Main results.}
We take the large-graph limit first, with \(d\) and \(\alpha\) fixed, and
then study the resulting tree laws as \(d\to\infty\). The high-degree
statements thus concern deterministic measures obtained by averaging the
root spectral measure over the tree environment.

\smallskip
\noindent\textit{Fixed degree: the tree law and its support.}
Consider a graph sequence in which the fraction of vertices with a tree
neighborhood of any fixed radius tends to one. The empirical spectral
measures of the discriminants converge almost surely to a deterministic
measure \(\mu_{d,\alpha}\). This is the expected spectral measure at the
root of the marked regular tree, which we call the \emph{tree law}
(Theorem~\ref{thm:quenched-limit}). We compute every moment by summing the
weights of closed walks. The formula retains the mixed Dirichlet moments
at each vertex, so it accounts for the dependence between incident edges
(Theorem~\ref{thm:moment-formula}).

For every \(\alpha>0\), the tree operator almost surely has essential
spectrum \([-1,1]\), and the tree law has support \([-1,1]\). Long paths
with internal edge weights close to one half support approximate
eigenvectors throughout the interval. Small side-branch weights control
lateral leakage, while the endpoint amplitudes of the path eigenvectors
control the two boundary contributions
(Theorems~\ref{thm:full-spectrum}--\ref{thm:full-support}).
Despite this common support, the endpoint laws have smaller support:
as \(\alpha\downarrow0\) the limit is a three-point law on
\(\{-1,0,1\}\), and as \(\alpha\to\infty\) it is the normalized
Kesten--McKay law of the unweighted regular tree
(Theorem~\ref{thm:disorder-limits}).

\smallskip
\noindent\textit{High degree: the spectral law and the mass carrying its second moment.}
The second moment of \(\mu_{d,\alpha}\) is \(1/d\), so multiplication of
the spectral variable by \(\sqrt d\) makes this moment equal to one.
For \(\alpha=\alpha_d\), the rescaled tree laws converge weakly if and
only if \(d\alpha_d^2\) converges in \([0,\infty]\). When this parameter
tends to infinity, the limit is the semicircle law. A finite positive
limit gives a non-semicircular critical law, and a zero limit gives
\(\delta_0\) (Corollary~\ref{cor:sqrt-d-phase-diagram}).

The critical laws and their two endpoint laws form a continuous family
in the first Wasserstein distance. At degree \(d\), the member indexed
by \(d\alpha^2\) approximates the rescaled tree law, with an error tending
to zero uniformly over \(\alpha>0\)
(Proposition~\ref{prop:uniform-phase-approximation}).

For a finite positive limit \(d\alpha_d^2\to\tau\), we identify the critical law as the average
root spectral measure of an essentially self-adjoint operator on a
Poisson-weighted tree. The intensity of the squared edge weights is given
explicitly by a Bessel-function density
(Theorem~\ref{thm:poisson-critical}).

The case \(d\alpha_d^2\to0\) reveals why a second parameter is needed.
Most spectral mass approaches zero, yet the second moment remains one.
A vanishing amount of mass at large rescaled spectral values carries this
moment.

To describe this mass, we weight the tree law by the square of the spectral
variable, normalize the result to a probability measure, and change the
observation scale. These \emph{energy-biased laws} converge if and only if
\(d\alpha_d\) converges in \([0,\infty]\), under the condition
\(d\alpha_d^2\to0\). Their limits are explicit laws built from products
of Beta variables, together with their endpoint limits
(Theorem~\ref{thm:subcritical-energy} and
Corollary~\ref{cor:energy-classification}). For example, all choices
\(\alpha_d=c/d\) with fixed \(c>0\) have the same variance-normalized
weak limit \(\delta_0\), but distinct energy-biased limits for distinct
\(c\).

\smallskip
\noindent\textit{Quantum-walk eigenphases and tree recursions.}
At high degree, the full unitary spectral law is dominated by
structural atoms at \(\pm1\). The high-degree transition studied here
is captured by extracting and normalizing the inherited component.
We center and rescale these eigenphases and compare their law with
the rescaled tree law. Their first Wasserstein distance is \(O(d^{-1/2})\), uniformly in
\(\alpha\), and improves to \(O(d^{-1})\) when the rescaled fourth
moments remain bounded. Hence the same high-degree spectral
classification and uniform approximation hold for these eigenphases
(Corollary~\ref{cor:eigenphase-diagram}).

Removing an edge of the tree also gives a recursion for branch Green
functions. The branch Green function remains dependent on the Dirichlet
mass assigned to the parent direction. Their product satisfies a closed
recursion. We prove uniqueness of its law as a random analytic function
with positive imaginary part and the resolvent bound. We also prove the
corresponding uniqueness result for the critical Poisson model
(Theorems~\ref{thm:cavity-fixed-point} and \ref{thm:poisson-rde}).

\paragraph{Related work.}
The spectral relation between reflection walks and self-adjoint
discriminants comes from abstract quantum-walk theory
\cite{SegawaSuzuki2019,KonnoSatoSegawaShikano2023}. For sparse random
operators, local weak convergence and resolvent methods yield spectral
limits \cite{BordenaveLelarge2010,AnantharamanEtAl2021}. Closed-walk
expansions provide another established approach to sparse and weighted
random matrices \cite{KhorunzhyShcherbinaVengerovsky2004,Zakharevich2006}.
We use these methods while keeping the mixed moments imposed by the
Dirichlet constraint.

Poisson-weighted trees and recursive distributional equations are also
established tools for random-operator limits
\cite{AldousSteele2004,AldousBandyopadhyay2005,
BordenaveCaputoChafai2011}. In particular, Jung \cite{Jung2018}
represents spectral limits of certain random matrix ensembles using
Poisson-weighted trees, with matrix entries independent up to Hermitian
symmetry. Here, the incident coefficients share
Dirichlet coordinates, and their mixed moments enter the closed-walk
expansion. The resulting critical law has a specific Bessel intensity,
while a second scaling recovers the spectral energy that is invisible
in the collapsed weak limit.

The Kesten--McKay-to-semicircle transition gives the comparison without
Dirichlet disorder
\cite{Kesten1959,McKay1981,DumitriuPal2012,GongLiLiu2025}.
Other work on tree operators concerns spectral stability, resonant
delocalization, and trees of finite cone type
\cite{AizenmanSimsWarzel2006,AizenmanWarzel2013,KellerLenzWarzel2013}.
Our support theorem identifies where the spectrum lies, without
distinguishing pure point, absolutely continuous, and singular continuous
parts. The Dirichlet Markov ensemble of Chafa\"i
\cite{Chafai2010} and classical random walks in Dirichlet environment
\cite{SabotTournier2017} provide further comparisons. Here we study the
sparse self-adjoint discriminant and its unitary reflection walk.

The way randomness enters a quantum walk is important. Hamza and Joye
study a different static-disorder model on homogeneous trees
\cite{HamzaJoye2014}. Quillen resamples a full Haar coin at every time
step and studies the averaged channel \cite{Quillen2026}; our environment
is held fixed. In a 2026 preprint (arXiv:2607.02356), Maltsev and Osman
study mesoscopic linear statistics for quantum graphs, including a
Haar-scattering ensemble. Our local coins, by contrast, are reflections
generated by rank-one projections. Higuchi and Segawa \cite{HiguchiSegawa2018} use Dirichlet boundary
conditions and finite-depth approximations on infinite trees. Their use
of ``Dirichlet'' refers to a boundary condition, not to the probability
distribution at a vertex.

Section~\ref{sec:model-reduction} defines the model and its spectral
mapping. Section~\ref{sec:infinite-volume} proves the fixed-degree results.
Section~\ref{sec:high-degree} establishes the two high-degree limits,
the Poisson representation, and the uniform approximation.
Section~\ref{sec:conclusion} discusses further questions, and
Appendix~\ref{sec:cavity} develops the finite-degree recursion.

\paragraph{Notation.}
We use \(\N=\{1,2,\ldots\}\), \(\T=\{z\in\C:|z|=1\}\), and
\(\C_+=\{z\in\C:\Imag z>0\}\). We write \(\mathcal B(S)\) for the Borel
sets of a topological space \(S\), and \(f_\#\mu\) for the pushforward of
\(\mu\) under a measurable map \(f\). The symbols \(\mu_{d,\alpha}\) and
\(\nu_{d,\alpha}\) denote the limiting discriminant and unitary laws,
respectively.  For a canonical closed-walk class \(\gamma\),
\(H_\gamma\) is its traced rooted tree and
\(\edg(\gamma)=|E(H_\gamma)|\).  We reserve \((a)_r\) for the rising
factorial and write \(x^{\underline r}\) for the falling factorial.  The
high-degree parameters \(\varepsilon_{d,\alpha}\), \(\tau_d\),
\(\theta_d\), and \(s_d\) are defined when first used in
Section~\ref{sec:high-degree}. For probability measures on \(\R\) with finite
first moment, \(W_1(\mu,\nu)=\inf\E|X-Y|\), where the infimum is over all
couplings with laws \(\mu\) and \(\nu\). On other metric spaces, we replace
\(|x-y|\) by the specified metric. We use the standard fact that weak
convergence, together with uniform integrability of first moments, implies
\(W_1\) convergence \cite[Chapter~6]{Villani2009}.

\section{Random reflection model and discriminant reduction}
\label{sec:model-reduction}

\subsection{The random reflection model}

Fix an integer \(d\geq 3\).  Let \(G=(V,E)\) be a finite simple
\(d\)-regular graph, and let
\[
 A^{\rightarrow}
 =
 \{(v,u):\{v,u\}\in E\}
\]
be its set of directed edges.  The walk space is
\[
 \mathcal{H}_{G}=\ell^2(A^{\rightarrow}).
\]
Fix an ordering of the \(d\) outgoing arcs at every vertex.  The
marked stars
\[
 \omega_v=(X_v,\xi_v),
 \qquad
 X_v=(X_{v,u})_{u\sim v},
 \qquad
 \xi_v=(\xi_{v,u})_{u\sim v},
\]
are independent and identically distributed over the vertices.  The marginal law
of the squared amplitudes is
\[
 X_v\sim\operatorname{Dirichlet}(\alpha,\ldots,\alpha),
 \qquad \alpha>0,
\]
and \(\xi_v\in\T^d\) may have an arbitrary conditional law given \(X_v\).
The phase coordinates may be dependent among themselves and on \(X_v\);
deterministic phase choices are included as a special case.  Set
\[
 q_{v,u}=\sqrt{X_{v,u}}\,\xi_{v,u},
 \qquad
 q_v=(q_{v,u})_{u\sim v}.
\]
Then \(\|q_v\|=1\).  The local reflection coin is
\[
 C_v=2q_vq_v^*-I_d,
\]
and the global coin is \(C=\bigoplus_{v\in V}C_v\).  The flip--flop
shift is
\[
 (S\psi)(v,u)=\psi(u,v).
\]
Thus \(S=S^*=S^{-1}\), and the one-step quantum walk is
\[
 U=SC.
\]

The canonical amplitude-disorder model is obtained by taking
\(\xi_{v,u}=1\).  The real and complex Haar-reflection models arise,
respectively, from \(\alpha=1/2\) with independent Rademacher signs and from
\(\alpha=1\) with independent uniform phases. All infinite-tree results and
locally tree-like limiting results below are independent of the admissible
conditional phase law: phases cancel on the tree neighborhoods that determine
the limiting moments.

The phrase \emph{weak amplitude disorder} refers to the limit
\(\alpha\to\infty\).  In the phase-aligned model \(\xi_{v,u}=1\), the law of
the local coin converges weakly to the point mass at the Grover coin.  For
general admissible phase laws, the tree spectral law is unchanged because the
gauge transformation below removes the phases.

\begin{remark}[Dirichlet random environment and reflection quantization]
In the phase-aligned model, $p(v,u):=X_{v,u}$ is a quenched Markov kernel
whose transition vectors are independent Dirichlet variables. With the boundary
operator $K$ defined below, $U=S(2K^*K-I)$ is the associated flip--flop reflection
quantization, and its
discriminant is
\[
 T_{v,u}=\sqrt{p(v,u)p(u,v)}.
\]
This connects Dirichlet random environments to the Szegedy reflection
framework.  The model with phases retains the same tree law, but products of phases
around cycles may affect the operator on finite graphs.
\end{remark}

\subsection{Discriminant and spectral mapping}

The next identities reduce the spectral analysis of the unitary walk to a
self-adjoint operator on vertices. The characteristic-polynomial formula
also keeps track of the contributions at the endpoints \(\pm1\).

Define \(K:\mathcal{H}_G\to\ell^2(V)\) by
\[
 (K\psi)(v)
 =
 \sum_{u\sim v}\overline{q_{v,u}}\psi(v,u).
\]
Then
\[
 (K^*f)(v,u)=q_{v,u}f(v),
 \qquad
 KK^*=I_{\ell^2(V)},
\]
and
\[
 C=2K^*K-I_{\mathcal{H}_G}.
\]
The discriminant is
\[
 T=KSK^*.
\]

\begin{proposition}[Self-adjoint reduction]\label{prop:self-adjoint-discriminant}
The operator \(T\) is a self-adjoint contraction on \(\ell^2(V)\), and
\[
 T_{v,u}
 =
 \begin{cases}
 \overline{q_{v,u}}q_{u,v},&u\sim v,\\
 0,&u\not\sim v.
 \end{cases}
\]
In particular,
\[
 |T_{v,u}|=\sqrt{X_{v,u}X_{u,v}}.
\]
\end{proposition}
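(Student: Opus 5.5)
The plan is to compute \(T=KSK^*\) directly on the standard basis, read off self-adjointness from the resulting matrix, and get the contraction bound from the isometry property of \(K^*\).

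\textbf{Matrix entries.} For \(f\in\ell^2(V)\), the formula for \(K^*\) gives \((K^*f)(u,v)=q_{u,v}f(u)\). Applying the flip--flop shift yields
\[
(SK^*f)(v,u)=(K^*f)(u,v)=q_{u,v}f(u).
\]
Applying \(K\) then gives
\[
(Tf)(v)=\sum_{u\sim v}\overline{q_{v,u}}\,q_{u,v}\,f(u).
\]
Reading off the coefficient of \(f(u)\) gives \(T_{v,u}=\overline{q_{v,u}}q_{u,v}\) when \(u\sim v\). When \(u\not\sim v\) the entry is \(0\); this includes \(u=v\), since the graph is simple.

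\textbf{Self-adjointness.} From the entry formula,
\[
\overline{T_{u,v}}=\overline{\overline{q_{u,v}}\,q_{v,u}}=q_{u,v}\overline{q_{v,u}}=T_{v,u},
\]
so the matrix is Hermitian. Alternatively, \(T^*=KS^*K^*=KSK^*=T\) because \(S=S^*\). Since \(V\) is finite, \(T\) is bounded and hence self-adjoint.

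\textbf{Contraction.} The identity \(KK^*=I_{\ell^2(V)}\) makes \(K^*\) an isometry, so \(\|K\|=\|K^*\|=1\). The shift \(S\) is unitary, so
\[
\|T\|\le\|K\|\,\|S\|\,\|K^*\|=1.
\]

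\textbf{Modulus.} Since \(q_{v,u}=\sqrt{X_{v,u}}\,\xi_{v,u}\) with \(|\xi_{v,u}|=1\), we have \(|q_{v,u}|=\sqrt{X_{v,u}}\), and therefore \(|T_{v,u}|=\sqrt{X_{v,u}X_{u,v}}\).

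The only step needing care is getting the arc orientation right in \(S\), namely \((SK^*f)(v,u)=q_{u,v}f(u)\) rather than \(q_{v,u}f(v)\); everything else follows immediately from the stated identities.
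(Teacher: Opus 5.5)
Your proof is correct and follows essentially the same route as the paper: self-adjointness from \(T^*=KS^*K^*=KSK^*\), the bound \(\|T\|\le\|K\|\|S\|\|K^*\|=1\), and a direct evaluation of the entries \(\langle\delta_v,T\delta_u\rangle\). You write out the entry computation through \(K^*\), \(S\), and \(K\) in more detail, with the arc orientation handled correctly, and justify \(\|K\|=\|K^*\|=1\) from \(KK^*=I\); the paper leaves both of these steps implicit.
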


\begin{proof}
Since \(S=S^*\), one has \(T^*=KS^*K^*=T\).  Moreover,
\(\|T\|\leq\|K\|\|S\|\|K^*\|=1\).  For \(u\sim v\),
\[
 \langle\delta_v,T\delta_u\rangle
 =\overline{q_{v,u}}q_{u,v},
\]
and the entry is zero otherwise.  In particular,
\(T_{u,v}=\overline{T_{v,u}}\).
\end{proof}

\begin{lemma}[Gauge reduction on a tree]\label{lem:tree-gauge}
Let \(G\) be a finite or locally finite infinite tree carrying the
normalized vertex amplitudes described above, and fix a root \(o\).
There exists a diagonal unitary
operator \(D\) on \(\ell^2(V)\), with \(D\delta_o=\delta_o\), such that
\[
 (D^*TD)_{v,u}=|T_{v,u}|=\sqrt{X_{v,u}X_{u,v}}
 \qquad (u\sim v).
\]
Consequently, the spectrum of \(T\) and its spectral measure at the root
depend solely on the squared amplitudes \(X\).
\end{lemma}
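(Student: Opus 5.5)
The plan is to build the diagonal unitary \(D=\operatorname{diag}(\eta_v)\), \(|\eta_v|=1\), recursively outward from the root along the unique geodesics of the tree. Set \(\eta_o=1\). Suppose \(\eta_v\) is already fixed and \(u\) is a child of \(v\), meaning \(u\sim v\) and \(u\) lies farther from \(o\). Then
\[
 (D^*TD)_{v,u}=\overline{\eta_v}\,T_{v,u}\,\eta_u ,
\]
and we choose \(\eta_u\) so that this quantity equals \(|T_{v,u}|\). When \(T_{v,u}\neq0\), take \(\eta_u=\eta_v\,\overline{T_{v,u}}/|T_{v,u}|\). When \(T_{v,u}=0\), take \(\eta_u=\eta_v\); this case occurs only on a null event, but it is harmless either way. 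Every vertex other than \(o\) has exactly one parent, and the tree is connected. Hence this defines \(\eta_u\) for all \(u\in V\) by induction on the distance to \(o\), including in the locally finite infinite case, where each vertex is reached after finitely many steps. No consistency condition arises because the tree has no cycles.

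Next we check the remaining entries. Each edge of a tree joins a parent to a child, so every edge entry has been treated in one orientation. For the reverse orientation, Proposition~\ref{prop:self-adjoint-discriminant} gives \(T_{u,v}=\overline{T_{v,u}}\). It follows that
\[
 (D^*TD)_{u,v}=\overline{(D^*TD)_{v,u}}=|T_{v,u}|=|T_{u,v}|.
\]
Non-adjacent entries and diagonal entries stay zero. Finally, \(|T_{v,u}|=|q_{v,u}||q_{u,v}|=\sqrt{X_{v,u}X_{u,v}}\) by Proposition~\ref{prop:self-adjoint-discriminant}.

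For the consequence, note that \(D\) is unitary because all \(|\eta_v|=1\), so \(D^*TD\) is unitarily equivalent to \(T\). On an infinite tree, \(T\) is bounded with norm at most \(1\). This follows from Proposition~\ref{prop:self-adjoint-discriminant} applied to \(T=KSK^*\), which is still well defined on \(\ell^2\) of a locally finite graph. Conjugation by \(D\) therefore preserves the spectrum. It also preserves the spectral measure at the root, because \(D\delta_o=\delta_o\) gives
\[
 \langle\delta_o,f(D^*TD)\delta_o\rangle=\langle D\delta_o,f(T)D\delta_o\rangle=\langle\delta_o,f(T)\delta_o\rangle .
\]
Since \(D^*TD\) has entries \(\sqrt{X_{v,u}X_{u,v}}\), which are functions of \(X\) alone, both objects depend only on the squared amplitudes.

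The only step that needs care is making the recursive definition rigorous on an infinite tree and recording that the operator identity holds entrywise. This is enough for bounded operators on \(\ell^2(V)\), since the entries \(\langle\delta_v,\cdot\,\delta_u\rangle\) determine a bounded operator. Beyond that the argument is routine.
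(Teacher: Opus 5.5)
Your proposal is correct and follows essentially the same route as the paper: set the root phase to one, choose each child's phase recursively outward so that the parent--child coefficient becomes \(|T_{v,u}|\), with acyclicity guaranteeing consistency and zero-weight edges imposing no constraint, and then use \(D\delta_o=\delta_o\) together with unitary equivalence to transfer the spectrum and the root spectral measure. Your explicit formula \(\eta_u=\eta_v\overline{T_{v,u}}/|T_{v,u}|\) and your check of the reverse orientation via \(T_{u,v}=\overline{T_{v,u}}\) make explicit what the paper states in words.
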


\begin{proof}
On each edge with \(a_{v,u}:=|T_{v,u}|>0\), write
\(T_{v,u}=a_{v,u}e^{\mathrm{i}\vartheta_{v,u}}\), where
\(\vartheta_{u,v}=-\vartheta_{v,u}\). Zero-weight edges impose no phase
constraint. Set the diagonal phase at the root to one. Proceeding away from
the root, choose each child's phase so that the conjugated coefficient on the
edge to its parent is nonnegative. The absence of cycles makes these choices
consistent. The resulting diagonal unitary fixes the root vector and therefore
preserves its spectral measure.
\end{proof}

Let \(n=|V|\) and \(m=|E|=dn/2\).

\begin{proposition}[Characteristic polynomial and trace identity]\label{prop:trace-identity}
For every \(z\in\C\),
\[
 \det(zI_{2m}-U)
 =
 (z^2-1)^{m-n}
 \det\!\left((z^2+1)I_n-2zT\right).
\]
If \(\mathsf{T}_k\) denotes the Chebyshev polynomial of the first kind, then
\[
 \Tr U^k
 =
 (m-n)\bigl(1+(-1)^k\bigr)
 +
 2\Tr\mathsf{T}_k(T),
 \qquad k\geq 1.
\]
\end{proposition}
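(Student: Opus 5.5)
The plan is to prove the determinant identity by a Schur-complement argument built on the factorization $U=SC$ with $C=2K^*K-I$, and then to read off the trace formula by comparing logarithmic derivatives. Write $P=K^*K$; since $KK^*=I_n$, $P$ is the orthogonal projection of rank $n$ on $\mathcal H_G$ (dimension $2m$). Then $U=S(2P-I)=2SK^*K-S$, and
\[
zI-U=(zI+S)-2SK^*K .
\]
For $z\neq\pm1$ the operator $zI+S$ is invertible, because $S^2=I$ gives $(zI+S)^{-1}=(zI-S)/(z^2-1)$. Since $S$ is an involution pairing the $2m$ arcs into $m$ transpositions, $\det(zI+S)=(z^2-1)^m$.

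By the matrix determinant lemma $\det(A-BC)=\det A\,\det(I-CA^{-1}B)$, applied with $A=zI+S$, $B=2SK^*$ and $C=K$,
\[
\det(zI-U)=(z^2-1)^m\det\!\Bigl(I_n-\tfrac{2}{z^2-1}K(zI-S)SK^*\Bigr).
\]
The identities $S^2=I$ and $KK^*=I$ give $K(zI-S)SK^*=zKSK^*-KK^*=zT-I_n$. The argument of the determinant is therefore
\[
I_n-\frac{2(zT-I)}{z^2-1}=\frac{(z^2+1)I-2zT}{z^2-1}.
\]
Pulling out $(z^2-1)^{-n}$ yields the identity for $z\neq\pm1$. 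Both sides are polynomials in $z$, so the identity extends to all $z$. Here we need $m\ge n$, which holds because $d\ge3$ gives $m=dn/2>n$.

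For the trace formula, let $T$ have eigenvalues $\lambda_j=\cos\theta_j$, using $\|T\|\le1$ from Proposition~\ref{prop:self-adjoint-discriminant}. Then $z^2+1-2z\lambda_j=(z-e^{i\theta_j})(z-e^{-i\theta_j})$, so the eigenvalues of $U$ with multiplicity are:
\begin{itemize}[nosep]
\item $e^{\pm i\theta_j}$ for $j=1,\dots,n$;
\item $1$ and $-1$, each with multiplicity $m-n$.
\end{itemize}
Summing $k$-th powers gives
\[
\Tr U^k=(m-n)\bigl(1+(-1)^k\bigr)+\sum_j 2\cos(k\theta_j).
\]
Since $\cos(k\theta_j)=\mathsf T_k(\lambda_j)$, the last sum equals $2\Tr\mathsf T_k(T)$.

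The main point needing care is the bookkeeping that $\det(zI+S)=(z^2-1)^m$ and the sign convention in the Schur step. A sign slip there would turn $(z^2+1)I-2zT$ into $(z^2+1)I+2zT$. To guard against this I would check the case $T=0$ by hand, where the formula should give $\det(zI-U)=(z^2-1)^{m-n}(z^2+1)^n$.
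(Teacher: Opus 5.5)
Your proposal is correct and is essentially the paper's proof. Both apply the matrix determinant lemma to $zI-U=(zI+S)-2SK^*K$ with $(zI+S)^{-1}=(zI-S)/(z^2-1)$ and $\det(zI+S)=(z^2-1)^m$, extend to $z=\pm1$ by polynomiality, and read off $\Tr U^k$ from the roots $e^{\pm i\theta_j}$ and the $(m-n)$-fold roots $\pm1$; your explicit note that $m\ge n$ (from $d\ge3$) is needed for the right-hand side to be a polynomial is a small point the paper leaves implicit.
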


\begin{proof}
For \(z^2\neq1\),
\[
 zI-U=zI+S-2SK^*K.
\]
Since
\[
 (zI+S)^{-1}=\frac{zI-S}{z^2-1},
\]
and the flip--flop shift is a direct sum of \(m\) two-dimensional swap blocks,
we have \(\det(zI+S)=(z^2-1)^m\). The matrix determinant lemma then gives
\begin{align*}
 \det(zI-U)
 &=
 \det(zI+S)
 \det\!\left(
 I_n-2K(zI+S)^{-1}SK^*
 \right)\\
 &=
 (z^2-1)^m
 \det\!\left(
 I_n-\frac{2(zT-I_n)}{z^2-1}
 \right)\\
 &=
 (z^2-1)^{m-n}
 \det\!\left((z^2+1)I_n-2zT\right).
\end{align*}
Both sides are polynomials, so the identity extends to \(z=\pm1\).

If \(\lambda=\cos\theta\in[-1,1]\) is an eigenvalue of \(T\), the associated
quadratic factor has roots \(e^{\pm i\theta}\).  Hence its contribution to
\(\Tr U^k\) is
\[
 e^{ik\theta}+e^{-ik\theta}
 =
 2\mathsf{T}_k(\lambda).
\]
The factor \((z^2-1)^{m-n}\) contributes the remaining term.
\end{proof}

\begin{corollary}[Exact relation between empirical laws]\label{cor:empirical-law-map}
Let
\[
 \mu_T=\frac1n\sum_{j=1}^n\delta_{\lambda_j(T)}
\]
and let \(\nu_U\) be the empirical spectral measure of \(U\), normalized by
\(2m=dn\).  With
\[
 \phi_\pm(x)=x\pm i\sqrt{1-x^2},
\]
one has
\[
 \nu_U
 =
 \frac{d-2}{2d}(\delta_1+\delta_{-1})
 +
 \frac1d(\phi_+)_\#\mu_T
 +
 \frac1d(\phi_-)_\#\mu_T.
\]
When \(T\) has an eigenvalue at \(\pm1\), the branches \(\phi_+\) and
\(\phi_-\) coincide. The corresponding quadratic factor is \((z\mp1)^2\),
so each endpoint eigenvalue contributes twice to the formula.
\end{corollary}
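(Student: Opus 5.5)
The plan is to read the measure identity directly off the factorization in Proposition~\ref{prop:trace-identity}. That proposition gives
\[
 \det(zI_{2m}-U)=(z^2-1)^{m-n}\prod_{j=1}^n\bigl(z^2-2\lambda_j z+1\bigr),
\]
where \(\lambda_1,\dots,\lambda_n\) are the eigenvalues of \(T\), listed with multiplicity. Since \(U\) is unitary, hence normal, its eigenvalue multiset with algebraic multiplicity is exactly the root multiset of this polynomial. It therefore suffices to list the roots with multiplicity and divide by \(2m=dn\).

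\textbf{Step 1: the structural factor.} The factor \((z^2-1)^{m-n}\) contributes \(m-n\) roots at \(1\) and \(m-n\) roots at \(-1\). Because \(m=dn/2\), dividing by \(dn\) gives weight
\[
 \frac{m-n}{dn}=\frac{d-2}{2d}
\]
at each of \(\pm1\). Here \(m-n\geq0\) since \(d\geq3\).

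\textbf{Step 2: the inherited factors.} By Proposition~\ref{prop:self-adjoint-discriminant}, \(\|T\|\le1\), so each \(\lambda_j\) lies in \([-1,1]\). The quadratic \(z^2-2\lambda_j z+1\) then has roots
\[
 \lambda_j\pm i\sqrt{1-\lambda_j^2}=\phi_\pm(\lambda_j).
\]
Each root carries mass \(1/(dn)\) in \(\nu_U\). Summing over \(j\) gives
\[
 \frac{1}{dn}\sum_j\bigl(\delta_{\phi_+(\lambda_j)}+\delta_{\phi_-(\lambda_j)}\bigr)
 =\frac1d(\phi_+)_\#\mu_T+\frac1d(\phi_-)_\#\mu_T,
\]
because \((\phi_\pm)_\#\mu_T=\frac1n\sum_j\delta_{\phi_\pm(\lambda_j)}\).

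\textbf{Step 3: the endpoint case.} If \(\lambda_j=\pm1\), the quadratic becomes \((z\mp1)^2\), a double root. In the formula this appears as \(\phi_+(\lambda_j)=\phi_-(\lambda_j)=\pm1\), each branch contributing \(1/(dn)\). This matches the multiplicity two, which justifies the final remark of the statement.

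The only point needing care is that the empirical spectral measure counts algebraic multiplicities. For a unitary matrix these coincide with geometric multiplicities, so the root count is legitimate. No real obstacle is expected.
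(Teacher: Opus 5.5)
Your proposal is correct and matches the paper's approach: the corollary is read off directly from the factorization $\det(zI_{2m}-U)=(z^2-1)^{m-n}\det\bigl((z^2+1)I_n-2zT\bigr)$ in Proposition~\ref{prop:trace-identity}. The $m-n$ structural roots at each of $\pm1$ give weight $(d-2)/(2d)$, each eigenvalue $\lambda_j$ of $T$ contributes the pair $\phi_\pm(\lambda_j)$, and your treatment of the double root at $\lambda_j=\pm1$ is exactly the multiplicity convention the paper states.
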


\begin{definition}[Normalized inherited spectral law]
The normalized inherited spectral law is defined by the two inverse
Joukowsky branches:
\[
 \nu_T^{\mathrm{inh}}
 =\frac12(\phi_+)_\#\mu_T
  +\frac12(\phi_-)_\#\mu_T.
\]
Consequently,
\[
 \nu_U
 =\left(1-\frac2d\right)
   \frac{\delta_1+\delta_{-1}}2
  +\frac2d\,\nu_T^{\mathrm{inh}}.
\]
The first term is the structural contribution from the flip--flop construction;
the second is the inherited contribution. Multiplicities follow the
characteristic-polynomial factorization, so both branches are counted even
when they coincide at an endpoint.
\end{definition}

The inherited part has total weight \(2/d\) in the full unitary law.
Normalizing this part to have mass one keeps the dependence on the
discriminant visible as the degree grows.

\section{Infinite-volume spectral law}
\label{sec:infinite-volume}

\subsection{Quenched convergence on locally tree-like graphs}

Let \(\mathcal I\subset\N\) be infinite, and let
\(G_n=(V_n,E_n)\), \(n\in\mathcal I\), be finite simple \(d\)-regular
graphs with \(|V_n|=n\).  In particular, \(n\geq d+1\) and \(dn\) is
even for \(n\in\mathcal I\).  All graph-sequence limits in this section
are taken as \(n\to\infty\) through \(\mathcal I\).
Let \(B_R(G,v)\) denote the rooted ball of graph radius \(R\), and let
\(\mathbb T_d\) be the infinite \(d\)-regular tree rooted at \(o\).
We write \(\simeq\) for rooted-graph isomorphism. The graphs are locally
tree-like in the following sense: for every fixed radius, the proportion
of vertices whose neighborhoods differ from those of the regular tree
tends to zero. Thus we assume that for every fixed \(R\),
\[
 \frac1n
 \#\left\{
 v\in V_n:
 B_R(G_n,v)\not\simeq B_R(\mathbb{T}_d,o)
 \right\}
 \longrightarrow0.
\]
To treat the entire graph sequence on one probability space, place the marked
stars on the product space
\[
 \prod_{n\in\mathcal I}\prod_{v\in V_n}\Omega_{n,v},
\]
with independent copies of the common one-vertex law, and let \(T_n\) be the
resulting discriminant.  All almost-sure statements for the sequence refer
to this product environment.

On the infinite \(d\)-regular tree \(\mathbb{T}_d\), put independent copies of
the same marks.  Let \(A^{\rightarrow}(\mathbb{T}_d)\) be its directed-edge
set and define
\[
 \mathcal H_\infty=\ell^2(A^{\rightarrow}(\mathbb{T}_d)),
 \qquad
 C_\infty=\bigoplus_{v\in\mathbb{T}_d}(2q_vq_v^*-I_d),
\]
and define the flip--flop shift \(S_\infty\) and boundary operator
\(K_\infty\) as in the finite-graph case. Then
\[
 U_\infty=S_\infty C_\infty
 \quad\text{and}\quad
 T_\infty=K_\infty S_\infty K_\infty^*.
\]
The direct-sum coin and the shift are unitary, \(K_\infty K_\infty^*=I\),
and hence \(U_\infty\) is unitary while \(T_\infty\) is a bounded
self-adjoint contraction. The spectral measure at the root depends on the
random environment. Its expectation is the deterministic probability measure
\[
 \mu_{d,\alpha}(B)
 =
 \E\left[
 \left\langle
 \delta_o,\1_B(T_\infty)\delta_o
 \right\rangle
 \right].
\]
Measurability of the rooted spectral measure follows from the standard theory
of measurable bounded local operators; see
\cite{BordenaveLelarge2010,AnantharamanEtAl2021}.

\begin{theorem}[Quenched limiting spectral law]\label{thm:quenched-limit}
For almost every realization of the vertex marks,
\[
 \mu_{T_n}
 =
 \frac1n\sum_{j=1}^n\delta_{\lambda_j(T_n)}
 \Longrightarrow
 \mu_{d,\alpha}.
\]
The limit is independent of the admissible conditional phase law.
\end{theorem}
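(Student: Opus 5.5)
We will prove the theorem by the method of moments. Since every \(T_n\) and \(T_\infty\) is a contraction (Proposition~\ref{prop:self-adjoint-discriminant}), all the measures involved are supported in \([-1,1]\). Weak convergence is therefore equivalent to convergence of all moments. Because there are only countably many \(k\), it suffices to show that for each fixed \(k\geq1\), almost surely,
\[
 \frac1n\Tr T_n^k\longrightarrow m_k:=\E\bigl[\langle\delta_o,T_\infty^k\delta_o\rangle\bigr]=\int x^k\,\mu_{d,\alpha}(\dd x).
\]
We write \(\frac1n\Tr T_n^k=\frac1n\sum_{v}\langle\delta_v,T_n^k\delta_v\rangle\). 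The summand at \(v\) is a sum over closed walks of length \(k\) from \(v\), so it depends only on the marks inside \(B_{\lceil k/2\rceil}(G_n,v)\). Its absolute value is at most \(1\).

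\emph{Step 1 (expectation).} Fix \(R=\lceil k/2\rceil\) and split the vertices into good vertices, where \(B_R(G_n,v)\simeq B_R(\mathbb T_d,o)\), and bad vertices. At a good vertex, the marks in the ball are i.i.d.\ copies of the star law, and the isomorphism carries them to the tree marks. Hence \(\E\langle\delta_v,T_n^k\delta_v\rangle=m_k\) exactly. By the gauge argument of Lemma~\ref{lem:tree-gauge}, this value does not depend on the phases: the ball is a tree, so the closed-walk weights only involve \(|T_{v,u}|^2\)-type products. Here we must check that the gauge transformation applies to the local quantity \(\langle\delta_v,T^k\delta_v\rangle\) on the ball. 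It does, because a closed walk on a tree traverses each edge equally often in each direction, so the phases cancel edge by edge. Bad vertices contribute at most \(1\) each, and their fraction tends to zero by the locally tree-like hypothesis. Therefore \(\E\frac1n\Tr T_n^k\to m_k\). Phase independence of \(\mu_{d,\alpha}\) follows from the same cancellation.

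\emph{Step 2 (concentration).} We need almost-sure convergence along the deterministic sequence, using the product environment. Consider \(F_n=\frac1n\Tr T_n^k\) as a function of the independent marks \(\omega_w\), \(w\in V_n\). Changing a single mark \(\omega_w\) changes only the summands at vertices \(v\) with \(w\in B_R(G_n,v)\). There are at most \(1+d+\dots+d(d-1)^{R-1}\le d^{R+1}\) such vertices, and each summand moves by at most \(2\). Hence \(F_n\) has bounded differences \(c_w\le 2d^{R+1}/n\). McDiarmid's inequality then gives
\[
 \Pp\bigl(|F_n-\E F_n|>\epsilon\bigr)\le 2\exp\!\bigl(-\epsilon^2 n/(2d^{2R+2})\bigr).
\]
The indices \(n\in\mathcal I\) are distinct integers, so this bound is summable over \(\mathcal I\). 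Borel--Cantelli then yields \(F_n-\E F_n\to0\) almost surely.

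Combining Steps 1 and 2 with a countable intersection over \(k\) and rational \(\epsilon\) gives almost-sure convergence of all moments. Since every measure lives on \([-1,1]\), where polynomials are dense in \(C([-1,1])\) by Weierstrass, this gives the weak convergence \(\mu_{T_n}\Rightarrow\mu_{d,\alpha}\).

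The main obstacle is the identification in Step 1 that the local moment \(\langle\delta_v,T_n^k\delta_v\rangle\) at a good vertex has exactly the law of the tree root quantity, including the phase cancellation. The subtle point is that \(T_{v,u}=\overline{q_{v,u}}q_{u,v}\) involves Dirichlet coordinates shared between incident edges. These coordinates are dependent, but this dependence is local to the stars, and the stars are independent. So the identification is purely an equality in law of the mark configuration on isomorphic balls, and no independence across edges is needed. The concentration step is routine once the locality radius \(R\) is fixed.
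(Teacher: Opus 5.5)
Your proof is correct and has the same overall structure as the paper's: the method of moments, the exact identity $\E\langle\delta_v,T_n^k\delta_v\rangle=m_k(d,\alpha)$ at vertices whose ball is a tree, an error of at most $2(1-|\mathcal G_{n,k}|/n)$ from the remaining vertices, and then McDiarmid plus Borel--Cantelli.

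The real difference is how you obtain the bounded differences.
\begin{itemize}
\item \emph{Your route (locality).} The diagonal entry at $v$ depends only on marks in $B_R(G_n,v)$. So one mark affects at most $|B_R(G_n,w)|\le d^{R+1}$ summands, which gives $c_w\le 2d^{R+1}/n$.
\item \emph{The paper's route (rank two).} Changing the mark at $v$ alters only the row and column of $v$. Hence $A=T_n-\widetilde T_n=\delta_vb^*+b\delta_v^*$ has rank at most two and norm at most two, so its trace norm is at most $4$. The telescoping identity for $T_n^k-\widetilde T_n^k$ then gives $|\Tr T_n^k-\Tr\widetilde T_n^k|\le 4k$. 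This means $c_w\le 4k/n$, with tail bound $2\exp(-nt^2/(8k^2))$.
\end{itemize}
Your argument is purely combinatorial and needs no trace-norm estimate, but its constant grows like $d^{k/2}$. The paper's constant does not depend on $d$ or on the graph geometry. That is the kind of degree-uniform concentration one would want for the simultaneous $n,d\to\infty$ limit raised in the conclusion. For fixed $d$, both routes suffice.

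One small ordering issue in your Step 1. The ball isomorphism need not carry the fixed arc orderings of $G_n$ to those of $\mathbb T_d$. This matters at boundary vertices, where only the parent arc lies in the ball. Also, the conditional phase law is not assumed permutation invariant. So the full marks $(X_v,\xi_v)$ are not automatically transported in law by the isomorphism. You should first cancel the phases edge by edge, which you already justify. That leaves a function of the squared amplitudes alone, and only then should you invoke the permutation invariance of the symmetric Dirichlet law to relabel. This is exactly the order in which the paper argues.
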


\begin{proof}
Fix \(k\geq1\); the zeroth moment is identically one.  Put
\[
 M_{n,k}=\frac1n\Tr T_n^k
\]
and
\[
 \mathcal G_{n,k}
 =
 \left\{
 v\in V_n:
 B_{\lceil k/2\rceil}(G_n,v)
 \simeq B_{\lceil k/2\rceil}(\mathbb T_d,o)
 \right\}.
\]
A closed walk of length \(k\) based at \(v\) stays within this ball. Its
edge coefficients use only the marks at vertices visited by the walk, and
each such mark includes the full outgoing star. On a tree, each unoriented
edge is traversed equally often in both directions, so all phase factors
cancel. Symmetry of the Dirichlet distribution makes the remaining
squared-amplitude law invariant under permutations of the arc labels. Thus,
for \(v\in\mathcal G_{n,k}\),
\[
 \E\langle\delta_v,T_n^k\delta_v\rangle
 =
 m_k(d,\alpha)
 :=
 \E\langle\delta_o,T_\infty^k\delta_o\rangle
 =
 \int_{-1}^1x^k\,\mu_{d,\alpha}(\dd x).
\]
Since both the diagonal entry and \(m_k(d,\alpha)\) have absolute value at
most one,
\[
 \left|\E M_{n,k}-m_k(d,\alpha)\right|
 \leq
 2\left(1-\frac{|\mathcal G_{n,k}|}{n}\right)
 \longrightarrow0.
\]

Let \(\widetilde T_n\) be the discriminant obtained by changing the mark
at a single vertex \(v\). Only the row and column indexed by \(v\) change.
Thus, for \(A=T_n-\widetilde T_n\), there is a vector \(b\) supported on
the neighbors of \(v\) such that
\[
 A=\delta_v b^*+b\delta_v^*.
\]
This decomposition gives \(\rank A\leq2\). Write \(\|\cdot\|_1\) for
the trace norm. Since
\(\|A\|\leq\|T_n\|+\|\widetilde T_n\|\leq2\), we obtain
\(\|A\|_1\leq4\). The telescoping identity
\[
 T_n^k-\widetilde T_n^k
 =\sum_{j=0}^{k-1}T_n^jA\widetilde T_n^{\,k-1-j}
\]
therefore gives
\[
 \left|
 \Tr T_n^k-\Tr \widetilde T_n^k
 \right|
 \leq
 \|T_n^k-\widetilde T_n^k\|_1
 \leq4k.
\]
McDiarmid's bounded-difference inequality
\cite[Lemma~(1.2), p.~149]{McDiarmid1989} gives, for every \(t>0\),
\[
 \Pp\left(
 |M_{n,k}-\E M_{n,k}|>t
 \right)
 \leq
 2\exp\left(-\frac{nt^2}{8k^2}\right).
\]
For each \(k\), the Borel--Cantelli lemma yields
\(M_{n,k}\to m_k(d,\alpha)\) almost surely. Taking the countable intersection
of these probability-one events gives simultaneous convergence for all
\(k\). Since every \(\mu_{T_n}\) is supported on \([-1,1]\), convergence of
all moments implies weak convergence.

The same phase cancellation shows that the limiting law does not depend on
the admissible conditional phase distribution.
\end{proof}

Define the probability measure on the unit circle
\[
 \nu_{d,\alpha}
 =
 \frac{d-2}{2d}(\delta_1+\delta_{-1})
 +\frac1d(\phi_+)_\#\mu_{d,\alpha}
 +\frac1d(\phi_-)_\#\mu_{d,\alpha},
\]
where \(\phi_\pm(x)=x\pm i\sqrt{1-x^2}\).

\begin{corollary}[Limiting law of the quantum walks]\label{cor:unitary-limit}
Let \(\nu_{U_n}\) be the empirical spectral measure of \(U_n\), normalized
by \(d|V_n|\).  Under the assumptions of
Theorem~\ref{thm:quenched-limit},
\[
 \nu_{U_n}\Longrightarrow\nu_{d,\alpha}
 \qquad\text{almost surely}.
\]
The measure \(\nu_{d,\alpha}\) is independent of the admissible phase law.
\end{corollary}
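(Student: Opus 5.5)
The plan is to combine the exact finite-graph identity of Corollary~\ref{cor:empirical-law-map} with the almost-sure convergence of Theorem~\ref{thm:quenched-limit}. For each \(n\in\mathcal I\), Corollary~\ref{cor:empirical-law-map} gives, deterministically in the environment,
\[
 \nu_{U_n}
 =\frac{d-2}{2d}(\delta_1+\delta_{-1})
 +\frac1d(\phi_+)_\#\mu_{T_n}
 +\frac1d(\phi_-)_\#\mu_{T_n},
\]
with multiplicities at \(\pm1\) counted as in the characteristic-polynomial factorization of Proposition~\ref{prop:trace-identity}. The structural atoms have fixed weights, independent of \(n\) and of the environment. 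It therefore suffices to show that \((\phi_\pm)_\#\mu_{T_n}\Rightarrow(\phi_\pm)_\#\mu_{d,\alpha}\) on the full-probability event of Theorem~\ref{thm:quenched-limit}.

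The key step is the continuous mapping theorem. The maps \(\phi_\pm\colon[-1,1]\to\T\), \(x\mapsto x\pm i\sqrt{1-x^2}\), are continuous on \([-1,1]\), including at the endpoints, where both branches equal \(\pm1\). All measures \(\mu_{T_n}\) and \(\mu_{d,\alpha}\) are supported in \([-1,1]\), because \(T_n\) and \(T_\infty\) are contractions. Hence, for every \(f\in C(\T)\), the function \(f\circ\phi_\pm\) is continuous and bounded on \([-1,1]\). Weak convergence \(\mu_{T_n}\Rightarrow\mu_{d,\alpha}\) then gives \(\int f\circ\phi_\pm\,\dd\mu_{T_n}\to\int f\circ\phi_\pm\,\dd\mu_{d,\alpha}\). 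Summing the three terms yields \(\int f\,\dd\nu_{U_n}\to\int f\,\dd\nu_{d,\alpha}\) almost surely.

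An alternative route, which also confirms the multiplicity bookkeeping, uses the trace identity of Proposition~\ref{prop:trace-identity}. Since \(m-n=(d-2)n/2\), it gives
\[
 \frac{1}{dn}\Tr U_n^k=\frac{d-2}{2d}\bigl(1+(-1)^k\bigr)+\frac{2}{d}\int \mathsf T_k\,\dd\mu_{T_n}.
\]
By Theorem~\ref{thm:quenched-limit} this converges almost surely to the \(k\)-th moment of \(\nu_{d,\alpha}\), because \(\int z^k\,\dd(\phi_+)_\#\mu+\int z^k\,\dd(\phi_-)_\#\mu=2\int\mathsf T_k\,\dd\mu\). Negative powers follow by conjugation, so convergence of the trigonometric moments on \(\T\) gives weak convergence.

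Independence from the phase law is inherited directly. The measure \(\nu_{d,\alpha}\) is an explicit function of \(\mu_{d,\alpha}\), and \(\mu_{d,\alpha}\) is phase-independent by Theorem~\ref{thm:quenched-limit} together with Lemma~\ref{lem:tree-gauge}.

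The only delicate point is the endpoint multiplicity convention. When \(T_n\) has an eigenvalue at \(\pm1\), the corresponding factor \((z\mp1)^2\) contributes twice, and this must agree with the pushforward formula. It does, since \(\phi_+(\pm1)=\phi_-(\pm1)=\pm1\), so the two pushforwards each place mass \(1/d\) at the same point. No other obstacle is expected.
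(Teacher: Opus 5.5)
Your proposal is correct and follows the paper's proof. The exact finite-graph decomposition of Corollary~\ref{cor:empirical-law-map}, together with continuity of \(\phi_\pm\) on \([-1,1]\), transfers the almost-sure weak convergence of Theorem~\ref{thm:quenched-limit}, and phase-independence is inherited from \(\mu_{d,\alpha}\). Your alternative trigonometric-moment route through Proposition~\ref{prop:trace-identity} is also valid, but it is not needed.
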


\begin{proof}
Apply Corollary~\ref{cor:empirical-law-map} and preservation of weak
convergence under the continuous maps \(\phi_+\) and \(\phi_-\).
\end{proof}

\subsection{Dirichlet closed-walk calculus}

A diagonal entry of a power of the discriminant is a sum over closed
walks. On a tree, each edge is crossed equally often in both directions,
so its phase cancels. The remaining calculation uses independence across
vertices and the Dirichlet mixed moments within each vertex.

For \(j\geq0\), write
\[
 m_j(d,\alpha)=\int_{-1}^1x^j\,\mu_{d,\alpha}(\dd x).
\]
For a rooted closed walk
\[
 \gamma=(v_0=o,v_1,\ldots,v_{2k}=o)
\]
on \(\mathbb{T}_d\), let \(V(\gamma)\) and \(E(\gamma)\) denote its traced
vertex and edge sets.  For an edge \(e=\{v,u\}\) incident to \(v\), write
\(X_{v,e}:=X_{v,u}\).  Let \(r_e(\gamma)\) be the number of traversals of
\(e\) in one fixed direction.  Since \(\gamma\) is closed and the ambient
graph is a tree, the two directional traversal counts coincide; hence \(e\)
is traversed \(2r_e(\gamma)\) times in total.  Set
\[
 R_v(\gamma)=\sum_{\substack{e\in E(\gamma)\\e\ni v}}r_e(\gamma).
\]
Consequently,
\[
 \sum_{e\in E(\gamma)}r_e(\gamma)=k,
 \qquad
 \sum_{v\in V(\gamma)}R_v(\gamma)=2k.
\]
We use the rising factorial
\[
 (a)_r=a(a+1)\cdots(a+r-1),
 \qquad (a)_0=1.
\]

\begin{theorem}[All-order moment formula]\label{thm:moment-formula}
Each rooted closed-walk sequence in \(\mathbb T_d\) contributes one term to
the sum below. The measure
\(\mu_{d,\alpha}\) is symmetric, and
\[
 m_{2k+1}(d,\alpha)=0,
\]
while
\begin{align*}
 m_{2k}(d,\alpha)
 &=
 \sum_{\substack{\gamma:o\to o\\|\gamma|=2k}}
 \prod_{v\in V(\gamma)}
 \frac{
 \displaystyle
 \prod_{\substack{e\in E(\gamma)\\e\ni v}}
 (\alpha)_{r_e(\gamma)}
 }{
 (d\alpha)_{R_v(\gamma)}
 }\\
 &=
 \sum_{\substack{\gamma:o\to o\\|\gamma|=2k}}
 \frac{
 \displaystyle\prod_{e\in E(\gamma)}
 (\alpha)_{r_e(\gamma)}^2
 }{
 \displaystyle\prod_{v\in V(\gamma)}
 (d\alpha)_{R_v(\gamma)}
 }.
\end{align*}
In particular,
\[
 m_2(d,\alpha)=\frac1d
\]
and
\[
 m_4(d,\alpha)
 =
 \frac{2\alpha(d-1)}{d^2(d\alpha+1)}
 +
 \frac{(\alpha+1)^2}{d(d\alpha+1)^2}.
\]
\end{theorem}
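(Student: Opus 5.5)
The plan is to expand the diagonal entry of a power of the gauge-reduced tree operator and compute the expectation walk by walk. By Lemma~\ref{lem:tree-gauge} we may take \(T_\infty\) with nonnegative entries \(\sqrt{X_{v,u}X_{u,v}}\) without changing the root spectral measure. We then write
\[
\langle\delta_o,T_\infty^j\delta_o\rangle=\sum_{\gamma}\prod_{i}\sqrt{X_{v_i,v_{i+1}}X_{v_{i+1},v_i}}
\]
over rooted closed walks of length \(j\). Since \(\mathbb T_d\) is bipartite, there are no closed walks of odd length, so \(m_{2k+1}=0\). Because \(\mu_{d,\alpha}\) is supported in \([-1,1]\) and determined by its moments, all odd moments vanishing gives symmetry. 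The interchange of expectation and the finite sum is trivial, since there are at most \(d^{j}\) walks and all weights are bounded.

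For a walk of length \(2k\), each traced edge \(e=\{v,u\}\) is crossed \(2r_e\) times, so the product of square roots equals
\[
\prod_{e}(X_{v,e}X_{u,e})^{r_e}.
\]
Regrouping by vertex gives \(\prod_{v\in V(\gamma)}\prod_{e\ni v}X_{v,e}^{r_e}\). Independence of the stars across vertices turns the expectation into a product over \(v\) of the mixed moments \(\E\prod_{e\ni v}X_{v,e}^{r_e}\). The distinct traced edges at \(v\) correspond to distinct coordinates of \(X_v\), and the untraced coordinates carry exponent \(0\). We then apply the standard Dirichlet mixed-moment formula
\[
\E\prod_i X_i^{a_i}=\frac{\Gamma(d\alpha)}{\Gamma(d\alpha+\sum a_i)}\prod_i\frac{\Gamma(\alpha+a_i)}{\Gamma(\alpha)}=\frac{\prod_i(\alpha)_{a_i}}{(d\alpha)_{\sum a_i}}.
\]
The denominator exponent is \(R_v(\gamma)\). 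This gives the first expression. The second follows because each edge contributes \((\alpha)_{r_e}\) once from each of its two endpoints. The counting identities \(\sum_e r_e=k\) and \(\sum_v R_v=2k\) are immediate.

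For the special cases we enumerate the closed walks explicitly. For \(m_2\) there are \(d\) walks \(o\to u\to o\), each with \(r=1\) on a single edge, contributing \(\alpha^2/(d\alpha)^2\). The total is \(d\cdot\frac{1}{d^2}=1/d\).

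For \(m_4\) there are three walk shapes:
\begin{itemize}
\item[(a)] Two distinct edges at the root (\(o\,u\,o\,u'\,o\), \(d(d-1)\) walks). Here \(R_o=2\), \(R_u=R_{u'}=1\), giving
\[
\frac{\alpha^4}{(d\alpha)_2\,(d\alpha)^2}=\frac{\alpha^2}{d^3\alpha(d\alpha+1)}.
\]
\item[(b)] A path \(o\,u\,w\,u\,o\) with \(w\neq o\) (\(d(d-1)\) walks). Here \(R_o=1\), \(R_u=2\), \(R_w=1\), giving the same weight as (a).
\item[(c)] The same edge traversed twice (\(o\,u\,o\,u\,o\), \(d\) walks). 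Here \(r=2\) and \(R_o=R_u=2\), giving
\[
\frac{(\alpha)_2^2}{(d\alpha)_2^2}=\frac{(\alpha+1)^2}{d^2(d\alpha+1)^2}.
\]
\end{itemize}
Summing, (a) and (b) together contribute
\[
2d(d-1)\cdot\frac{\alpha}{d^3(d\alpha+1)}=\frac{2\alpha(d-1)}{d^2(d\alpha+1)},
\]
and (c) contributes \(d\cdot\frac{(\alpha+1)^2}{d^2(d\alpha+1)^2}=\frac{(\alpha+1)^2}{d(d\alpha+1)^2}\), which matches the stated value of \(m_4\).

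The main obstacle is the bookkeeping of per-vertex exponents, making sure that distinct incident edges really correspond to distinct Dirichlet coordinates. The tree structure guarantees this, since there are no multi-edges. The rest is routine.
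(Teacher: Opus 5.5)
Your proposal is correct and follows essentially the same route as the paper: expand the root entry of \(T_\infty^{2k}\) over closed walks, apply the Dirichlet mixed-moment identity at each vertex using independence across vertices, and enumerate the walk types of lengths two and four, and your counts and weights there are correct. The only differences are minor: you remove the phases with the gauge lemma where the paper cancels them edge by edge, and you obtain symmetry from vanishing odd moments plus moment determinacy on \([-1,1]\) where the paper uses the parity conjugation \(JT_\infty J=-T_\infty\); both are valid.
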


\begin{proof}
Let \(J\) be the parity operator
\[
 (Jf)(v)=(-1)^{\operatorname{dist}(o,v)}f(v).
\]
Every edge joins opposite parity classes, so
\(JT_\infty J=-T_\infty\), while \(J\delta_o=\delta_o\). The root spectral
measure, and hence \(\mu_{d,\alpha}\), is therefore symmetric. Bipartiteness
also rules out odd closed walks. For an even closed walk, phases cancel edge
by edge, leaving the weight
\[
 \prod_{e=\{x,y\}}
 \bigl(X_{x,y}X_{y,x}\bigr)^{r_e(\gamma)}.
\]
The vertex marks are independent.  At a fixed vertex,
the Dirichlet moment identity gives
\[
 \E\left[
 \prod_{\substack{e\in E(\gamma)\\e\ni v}}X_{v,e}^{r_e}
 \right]
 =
 \frac{\prod_{\substack{e\in E(\gamma)\\e\ni v}}(\alpha)_{r_e}}
 {(d\alpha)_{R_v(\gamma)}}.
\]
Independence across vertices lets us multiply these factors. Summing over
closed walks gives the first expression; the second follows because each
traced edge has exactly two endpoints.

At length two, there are \(d\) walks \(o\to u\to o\), each with expected
weight \(d^{-2}\). Thus \(m_2(d,\alpha)=1/d\). At length four, there are
three types. Walks \(o\to u\to o\to w\to o\) with \(u\neq w\) and
\(o\to u\to w\to u\to o\) with \(w\neq o\) each have \(d(d-1)\)
realizations, all with expected weight
\[
 \frac{\alpha}{d^3(d\alpha+1)}.
\]
The alternating type \(o\to u\to o\to u\to o\) has \(d\) realizations,
each with expected weight
\[
 \left[\frac{\alpha+1}{d(d\alpha+1)}\right]^2.
\]
Adding the three contributions gives the stated formula for \(m_4\).
\end{proof}

\subsection{A general rare-path criterion and the Dirichlet specialization}

Full spectrum can arise from rare paths on which two directions at each
interior vertex receive nearly half the mass each. The remaining mass is
small, so the path is weakly coupled to the rest of the tree. This
mechanism applies beyond the Dirichlet distribution. Let
\[
 \Delta_{d-1}
 =\left\{x\in[0,1]^d:\sum_{j=1}^d x_j=1\right\},
\]
and let \(\mathscr L\) be a permutation-invariant probability law on
\(\Delta_{d-1}\).  We say that \(\mathscr L\) satisfies the
\emph{half-split support condition} if, for every \(\eta\in(0,1/4)\),
\[
 \mathscr L\left(
 \left\{x:\left|x_1-\frac12\right|<\eta,
             \ \left|x_2-\frac12\right|<\eta\right\}
 \right)>0.
 \tag{H}
\]
Attach independent \(\mathscr L\)-distributed marks to the vertices of
\(\mathbb T_d\), and let \(T_{\mathscr L}\) be the weighted adjacency operator with
nonnegative edge weights
\[
 (T_{\mathscr L})_{u,v}=\sqrt{X_{u,v}X_{v,u}}.
\]
It is a self-adjoint contraction by the same discriminant construction as in
Section~\ref{sec:model-reduction}.  Write
\[
 \mu_{\mathscr L}(B)
 =\E\langle\delta_o,\1_B(T_{\mathscr L})\delta_o\rangle.
\]
The symmetric Dirichlet law satisfies (H). For every \(\eta>0\), the
half-split event contains a nonempty relatively open set in the simplex
interior, where the Dirichlet density is strictly positive.

For a simple path
\[
 P=(v_0,v_1,\ldots,v_L,v_{L+1})
\]
and \(0<\eta<1/4\), let \(\mathcal G(P,\eta)\) be the event that, for every
\(1\le j\le L\),
\begin{align*}
 \left|X_{v_j,v_{j-1}}-\frac12\right|&<\eta,\\
 \left|X_{v_j,v_{j+1}}-\frac12\right|&<\eta.
\end{align*}
The simplex constraint then gives
\[
 \sum_{\substack{w\sim v_j\\
 w\notin\{v_{j-1},v_{j+1}\}}}X_{v_j,w}<2\eta.
\]

\begin{lemma}[Occurrence of rare paths]\label{lem:rare-paths}
Assume (H). For every fixed \(L\ge2\) and \(\eta\in(0,1/4)\), the event
\(\mathcal G(P,\eta)\) has strictly positive probability. In any deterministic
infinite family of pairwise vertex-disjoint paths of this length, almost
surely infinitely many paths satisfy \(\mathcal G(P,\eta)\).
\end{lemma}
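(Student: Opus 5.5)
The event \(\mathcal G(P,\eta)\) involves only the marks at the interior vertices \(v_1,\ldots,v_L\). Since marks at distinct vertices are independent, its probability factors as a product over \(j=1,\ldots,L\). The \(j\)-th factor is the probability that the two coordinates of \(X_{v_j}\) indexed by the arcs toward \(v_{j-1}\) and \(v_{j+1}\) both lie within \(\eta\) of \(1/2\). These two arcs are distinct, because the path is simple, so \(v_{j-1}\neq v_{j+1}\). By permutation invariance of \(\mathscr L\), this probability equals
\[
 p_\eta:=\mathscr L\bigl(\{x:|x_1-\tfrac12|<\eta,\ |x_2-\tfrac12|<\eta\}\bigr),
\]
whatever the arc labels are. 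Condition (H) gives \(p_\eta>0\). Hence
\[
 \Pp(\mathcal G(P,\eta))=p_\eta^{L}>0 .
\]
This bound is uniform over all simple paths of length \(L\). The endpoints \(v_0\) and \(v_{L+1}\) impose no constraint on their own marks.

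For the second assertion, take a deterministic infinite family \((P_i)_{i\ge1}\) of pairwise vertex-disjoint paths of this length. Each event \(\mathcal G(P_i,\eta)\) is measurable with respect to the marks at the interior vertices of \(P_i\). These vertex sets are pairwise disjoint, so the events are independent. Each has probability \(p_\eta^L\), so
\[
 \sum_i\Pp(\mathcal G(P_i,\eta))=\infty .
\]
The second Borel--Cantelli lemma then shows that, almost surely, infinitely many \(\mathcal G(P_i,\eta)\) occur. Disjointness of the full vertex sets is more than we need; disjointness of the interior vertex sets already suffices.

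The only point needing care is the reduction of each vertex factor to the fixed coordinates \((x_1,x_2)\). The marks are indexed by an arbitrary fixed ordering of arcs, so we use that the law of \((X_{v,a},X_{v,b})\) for any two distinct arcs \(a\neq b\) equals the law of \((x_1,x_2)\) under \(\mathscr L\). This is exactly what permutation invariance provides. No genuine obstacle remains. For the Dirichlet specialization, (H) holds as noted before the lemma, because the half-split set contains an open subset of the simplex interior, where the Dirichlet density is positive.
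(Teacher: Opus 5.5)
Your proof is correct and follows essentially the same route as the paper: you factor over the interior vertices by independence, reduce each vertex factor to the one-vertex probability $p_\eta$ from (H) via permutation invariance, and then apply the second Borel--Cantelli lemma to the independent events attached to the disjoint paths. Your added remark that disjointness of the interior vertex sets already suffices is a harmless refinement of the same argument.
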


\begin{proof}
At each interior vertex, the two path directions correspond to distinct
coordinates. By permutation invariance and (H), the required inequalities
hold with positive probability. Let \(p_\eta>0\) be the one-vertex probability
in (H). Independence of the vertex marks gives
\[
 \Pp(\mathcal G(P,\eta))=p_\eta^L>0.
\]
For a deterministic family of pairwise vertex-disjoint paths,
the corresponding events depend on disjoint sets of marks and are therefore
independent.  The second Borel--Cantelli lemma proves the assertion.
\end{proof}

For \(L\ge1\), let \(J_L\) be the path operator with Dirichlet boundary
conditions on \(\ell^2(\{1,\ldots,L\})\),
\[
 (J_Lf)_j=\frac12 f_{j-1}+\frac12 f_{j+1},
 \qquad f_0=f_{L+1}=0.
\]
Its normalized eigenvectors and eigenvalues are
\[
 \varphi_j^{(L,r)}
 =\sqrt{\frac{2}{L+1}}
 \sin\left(\frac{r\pi j}{L+1}\right),
 \qquad
 \lambda_{L,r}
 =\cos\left(\frac{r\pi}{L+1}\right),
 \quad 1\le r\le L.
\]

\begin{lemma}[Rare-path quasimodes]\label{lem:rare-quasimode}
Suppose \(\mathcal G(P,\eta)\) occurs.  Extend
\(\varphi^{(L,r)}\) by zero from \(v_1,\ldots,v_L\) to the full tree, and
denote the resulting unit vector by \(\psi_{P,L,r}\).  Then
\[
 \left\|(T_{\mathscr L}-\lambda)\psi_{P,L,r}\right\|
 \le
 |\lambda_{L,r}-\lambda|
 +2\eta
 +\sqrt{2\eta+\frac{4}{L+1}}
\]
for every \(\lambda\in\R\).
\end{lemma}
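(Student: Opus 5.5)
We split the residual by the triangle inequality as
\[
 (T_{\mathscr L}-\lambda)\psi
 =(\lambda_{L,r}-\lambda)\psi
 +(T_{\mathscr L}-\lambda_{L,r})\psi ,
\]
where \(\psi=\psi_{P,L,r}\) and \(\varphi=\varphi^{(L,r)}\). The first term has norm \(|\lambda_{L,r}-\lambda|\), so it suffices to show
\[
 \|(T_{\mathscr L}-\lambda_{L,r})\psi\|\le 2\eta+\sqrt{2\eta+4/(L+1)} .
\]
We compute \(T_{\mathscr L}\psi\) vertex by vertex and split the tree into three parts: the path interior \(\{v_1,\dots,v_L\}\), the two endpoints \(v_0,v_{L+1}\), and the side vertices adjacent to some \(v_j\) with \(1\le j\le L\) but not on \(P\). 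Since \(T_{\mathscr L}\) is a tree adjacency with edge weights \(a_{x,y}=\sqrt{X_{x,y}X_{y,x}}\), these three regions carry disjoint parts of \(T_{\mathscr L}\psi\).

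\emph{Interior.} At \(v_j\) we have
\[
 (T_{\mathscr L}\psi)(v_j)=a_{j,j-1}\varphi_{j-1}+a_{j,j+1}\varphi_{j+1},
\]
with \(\varphi_0=\varphi_{L+1}=0\). The difference from \((J_L\varphi)_j=\lambda_{L,r}\varphi_j\) is therefore \(\sum(a-\tfrac12)\varphi\) over the two neighbors. On \(\mathcal G(P,\eta)\) both endpoint masses of an interior edge lie within \(\eta\) of \(1/2\), so \(|a-\tfrac12|<\eta\) for those edges. Regard this difference as \((W-\tfrac12 A_L)\varphi\), where \(W\) is the weighted path matrix and \(A_L\) is the path adjacency. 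This is a Jacobi matrix with off-diagonal entries bounded by \(\eta\), so its operator norm is at most \(2\eta\). That gives the term \(2\eta\).

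\emph{Side vertices.} A side neighbor \(w\) of \(v_j\) receives \(a_{w,v_j}\varphi_j\), and \(a_{w,v_j}^2=X_{w,v_j}X_{v_j,w}\le X_{v_j,w}\). Summing over the side neighbors of \(v_j\) and using the simplex bound gives at most \(2\eta\,\varphi_j^2\). Summing over \(j\) and using \(\|\varphi\|=1\), the side contribution has squared norm at most \(2\eta\).

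\emph{Endpoints.} The endpoint \(v_0\) receives \(a_{v_0,v_1}\varphi_1\). Only \(X_{v_1,v_0}\) is controlled, but \(a\le 1\) in general. From \(|\varphi_1|^2\le 2/(L+1)\), this gives squared norm at most \(2/(L+1)\), and the same holds at \(v_{L+1}\). Together the endpoints contribute at most \(4/(L+1)\).

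The side and endpoint contributions are supported on disjoint vertex sets, so their squared norms add, giving norm at most \(\sqrt{2\eta+4/(L+1)}\). The interior error is supported on the path, and the triangle inequality with this combined term yields the bound. The main care point is orthogonality: both the side and endpoint vertices lie off the interior support, so they must be grouped together as one orthogonal term. The interior difference must be bounded through the Jacobi-matrix norm rather than entrywise. No bound better than \(a\le1\) is available at the endpoints, so the endpoint term rests entirely on the smallness of \(\varphi_1\) and \(\varphi_L\).
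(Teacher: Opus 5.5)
Your proof is correct and is essentially the paper's argument. It uses the same splitting by the projection onto $\operatorname{span}\{\delta_{v_1},\ldots,\delta_{v_L}\}$, the same tridiagonal perturbation of $J_L$ with row sums at most $2\eta$, and the same off-path squared bound $2\eta+4/(L+1)$ from the side branches and the two boundary edges. One small aside: your closing remark that nothing better than $a\le1$ is available at the endpoints is not accurate, since $a_{v_0,v_1}^2\le X_{v_1,v_0}<\tfrac12+\eta$ on $\mathcal G(P,\eta)$; this affects only sharpness, not the proof.
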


\begin{proof}
Let \(Q_P\) be the orthogonal projection onto
\(\operatorname{span}\{\delta_{v_1},\ldots,\delta_{v_L}\}\). On the specified
good event, every internal path weight satisfies
\[
 \left|\sqrt{X_{v_j,v_{j+1}}X_{v_{j+1},v_j}}-\frac12\right|<\eta,
 \qquad 1\le j<L,
\]
because the geometric mean of two positive numbers lies between them. The
compression of \(T_{\mathscr L}\) to the path support therefore differs from
\(J_L\) by a tridiagonal matrix with maximum absolute row sum at most
\(2\eta\). Hence
\[
 \left\|Q_P(T_{\mathscr L}-\lambda)\psi_{P,L,r}\right\|
 \le 2\eta+|\lambda_{L,r}-\lambda|.
\]

An outside vertex is adjacent to at most one of
\(v_1,\ldots,v_L\), since the ambient tree has no cycles.  Moreover,
\[
 X_{v_j,w}X_{w,v_j}\le X_{v_j,w}.
\]
Writing \(\psi=\psi_{P,L,r}\), the tree structure gives
\[
 \|(I-Q_P)T_{\mathscr L}\psi\|^2
 =
 \sum_{j=1}^L|\varphi_j^{(L,r)}|^2
 \sum_{\substack{w\notin\{v_1,\ldots,v_L\}\\w\sim v_j}}
 X_{v_j,w}X_{w,v_j}.
\]
Off-path branches contribute at most \(2\eta\) to this squared norm. The two
boundary edges contribute at most
\[
 |\varphi_1^{(L,r)}|^2+|\varphi_L^{(L,r)}|^2
 \le\frac{4}{L+1},
\]
since every edge weight is at most one. Combining the bound on the path
support with the leakage bound proves the claim.
\end{proof}

\begin{theorem}[Rare-path full-spectrum criterion]\label{thm:full-spectrum}
If \(\mathscr L\) is permutation invariant and satisfies (H), then
\[
 \sigma(T_{\mathscr L})
 =\sigma_{\mathrm{ess}}(T_{\mathscr L})
 =[-1,1]
 \qquad\text{almost surely}.
\]
\end{theorem}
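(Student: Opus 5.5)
The proof is a Weyl-sequence argument built from the rare-path quasimodes of Lemma~\ref{lem:rare-quasimode}. Two inclusions are needed. The upper bound is immediate: \(T_{\mathscr L}\) is a self-adjoint contraction, so \(\sigma_{\mathrm{ess}}(T_{\mathscr L})\subseteq\sigma(T_{\mathscr L})\subseteq[-1,1]\). It therefore suffices to show that \([-1,1]\subseteq\sigma_{\mathrm{ess}}(T_{\mathscr L})\) almost surely.

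Because the essential spectrum is closed, we only need this inclusion on a countable dense set, say \(\lambda\in\mathbb Q\cap[-1,1]\). Countable intersection then reduces the problem to showing that each fixed \(\lambda\) lies in \(\sigma_{\mathrm{ess}}\) almost surely. Fix such a \(\lambda\) and an integer \(N\ge1\). Choose \(L\) large and \(r\) so that
\[
 |\lambda_{L,r}-\lambda|<1/N \quad\text{and}\quad \frac{4}{L+1}<N^{-2}.
\]
This is possible since \(\{\cos(r\pi/(L+1))\}_{r}\) has mesh at most \(\pi/(L+1)\) in \([-1,1]\). Then pick \(\eta=\eta_N\in(0,1/4)\) with \(2\eta+\sqrt{2\eta+N^{-2}}<2/N\).

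Next we need an infinite deterministic family of pairwise vertex-disjoint simple paths \((v_0,\ldots,v_{L+1})\) in \(\mathbb T_d\). One way is to take paths descending in distinct subtrees, for instance inside the subtrees rooted at distinct vertices of a fixed geodesic ray, which are disjoint. Lemma~\ref{lem:rare-paths} applies to the interior vertices \(v_1,\ldots,v_L\), so almost surely infinitely many of these paths satisfy \(\mathcal G(P,\eta_N)\). Only interior marks matter there, so disjointness of the interiors is what matters for independence. For each good path, Lemma~\ref{lem:rare-quasimode} gives a unit vector \(\psi\) with \(\|(T_{\mathscr L}-\lambda)\psi\|<3/N\).

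The vectors from different good paths have disjoint supports and are therefore orthonormal. Selecting, for each \(N\), one good path disjoint from those chosen for \(1,\ldots,N-1\) yields an orthonormal sequence \(\psi_N\) with \(\|(T_{\mathscr L}-\lambda)\psi_N\|\to0\). By Weyl's criterion, an orthonormal (hence weakly null) approximate-eigenvector sequence puts \(\lambda\in\sigma_{\mathrm{ess}}\). The countably many almost-sure events, indexed by \(\lambda\) and \(N\), intersect to a full-measure event on which \([-1,1]\subseteq\sigma_{\mathrm{ess}}\).

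The only delicate point is the bookkeeping: the parameters \(L,r,\eta\) must be fixed before invoking Lemma~\ref{lem:rare-paths}, which requires them deterministic. After that, the disjointness needed for orthogonality is supplied by that lemma's own infinitude of good paths. Endpoints \(\lambda=\pm1\) cause no trouble, since \(\lambda_{L,1}\to1\) and \(\lambda_{L,L}\to-1\).
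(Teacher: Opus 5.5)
Your proposal is correct and follows the paper's argument. It bounds the spectrum by the contraction property, then for each $\lambda$ in a countable dense set builds a singular Weyl sequence from the rare-path quasimodes on infinitely many good, pairwise vertex-disjoint paths, and finishes by a countable intersection and closedness of $\sigma_{\mathrm{ess}}$. The only difference is bookkeeping: the paper fixes a single doubly indexed family $(P_{n,k})$ of pairwise vertex-disjoint paths, whereas you take a separate family for each $N$ and select greedily, which is valid because only finitely many members of a pairwise disjoint family can meet the finitely many previously chosen vertices.
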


\begin{proof}
The contraction property gives
\(\sigma(T_{\mathscr L})\subset[-1,1]\).  Let \(D\) be a countable dense
subset of \([-1,1]\), and fix \(\lambda\in D\).  Choose
\(L_n\uparrow\infty\), indices \(1\le r_n\le L_n\), and
\(\eta_n\downarrow0\) such that
\[
 \lambda_{L_n,r_n}\longrightarrow\lambda.
\]
Choose a deterministic family \((P_{n,k})_{n,k\geq1}\) of pairwise
vertex-disjoint paths, with \(P_{n,k}\) having \(L_n+2\) vertices. Such a
family can be constructed recursively: deleting finitely many vertices from
\(\mathbb T_d\) always leaves an infinite component. For each \(n\),
Lemma~\ref{lem:rare-paths} gives infinitely many \(\eta_n\)-good paths almost
surely. Select the good path with the smallest index \(k\); this is a
measurable choice. Let \(\psi_n\) be its vector from
Lemma~\ref{lem:rare-quasimode}. The supports are disjoint, so
\(\psi_n\rightharpoonup0\), and
\[
 \|(T_{\mathscr L}-\lambda)\psi_n\|
 \le
 |\lambda_{L_n,r_n}-\lambda|
 +2\eta_n
 +\sqrt{2\eta_n+\frac{4}{L_n+1}}
 \longrightarrow0.
\]
Thus \((\psi_n)\) is a singular Weyl sequence. Since \(D\) and the indices \(n\) are countable, intersecting the
corresponding probability-one events gives a single event
on which all these sequences exist. Closedness of the essential spectrum
then yields \([-1,1]\subset\sigma_{\mathrm{ess}}(T_{\mathscr L})\) almost
surely. The reverse inclusions follow from the contraction property.
\end{proof}

\begin{theorem}[Full support of the averaged tree law]\label{thm:full-support}
Under the hypotheses of Theorem~\ref{thm:full-spectrum},
\[
 \supp\mu_{\mathscr L}=[-1,1].
\]
For the symmetric Dirichlet environment, this gives
\(\supp\mu_{d,\alpha}=[-1,1]\).  The corresponding infinite-tree quantum
walk satisfies
\[
 \sigma(U_\infty)=\T
 \qquad\text{almost surely},
\]
and the limiting unitary law \(\nu_{d,\alpha}\) has support \(\T\).
\end{theorem}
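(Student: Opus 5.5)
The plan is to show that every open interval $I\subset(-1,1)$ centered at some $\lambda\in[-1,1]$ receives positive $\mu_{\mathscr L}$-mass. We prove this directly with a local quasimode near the root rather than with the essential spectrum, since the expected root measure only sees spectral mass that the root vector actually overlaps. Fix $\lambda$ and $\epsilon>0$. Choose $L$, $r$ and $\eta$ with
\[
 |\lambda_{L,r}-\lambda|+2\eta+\sqrt{2\eta+4/(L+1)}<\epsilon/2 .
\]
Take a deterministic simple path $P=(v_0,\dots,v_{L+1})$ in $\mathbb T_d$ with $v_1=o$, so the path starts at a neighbor $v_0$ of the root and runs outward. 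Lemma~\ref{lem:rare-paths} gives $\Pp(\mathcal G(P,\eta))=p_\eta^L>0$.

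On $\mathcal G(P,\eta)$, Lemma~\ref{lem:rare-quasimode} gives a unit vector $\psi=\psi_{P,L,r}$ with $\|(T_{\mathscr L}-\lambda)\psi\|<\epsilon/2$. Let $E$ be the spectral projection of $T_{\mathscr L}$ onto $(\lambda-\epsilon,\lambda+\epsilon)$. Chebyshev's spectral inequality gives
\[
 \|(I-E)\psi\|\le \epsilon^{-1}\|(T_{\mathscr L}-\lambda)\psi\|<\tfrac12 .
\]
Consequently
\[
 |\langle\delta_o,E\psi\rangle|\ge|\psi(o)|-\tfrac12=|\varphi^{(L,r)}_1|-\tfrac12 .
\]
This bound is the obstacle. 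The endpoint amplitude $\varphi_1^{(L,r)}$ is of order $L^{-3/2}$ (or $L^{-1/2}$ near the middle), which is tiny, so the lower bound just written is useless.

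To fix this, place the root at an interior site $j_0$ that maximizes $|\varphi^{(L,r)}_{j}|$; this site has $|\varphi_{j_0}|^2\ge 1/L$. The estimate also has to be sharpened so that it does not compare against $\tfrac12$. I would use
\[
 \langle\delta_o,E\delta_o\rangle\ \ge\ |\langle \delta_o,E\psi\rangle|^2
 \quad\text{and}\quad
 |\langle\delta_o,E\psi\rangle|\ge|\psi(o)|-\|(I-E)\psi\|,
\]
together with $\|(I-E)\psi\|\le\epsilon^{-1}\delta$, where $\delta=\|(T_{\mathscr L}-\lambda)\psi\|$. For this to work we need $\delta\ll\epsilon L^{-1/2}$. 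The path length $L$ cannot be sent to infinity here, because the boundary term $\sqrt{4/(L+1)}$ only decays like $L^{-1/2}$. I would therefore replace the Dirichlet-boundary eigenvector by a smooth cutoff of a plane wave, in the standard Weyl-sequence style. Set $\psi_j=c\,\chi(j/L)\sin(\theta j)$ with $\cos\theta=\lambda$ and $\chi$ a smooth bump. The boundary leakage then becomes $O(L^{-3/2})$ in norm, the commutator error becomes $O(1/L)$, and $|\psi(o)|\gtrsim L^{-1/2}$ at an interior maximum of $|\psi|$. The commutator error $O(1/L)$ is still not $\ll L^{-1/2}\epsilon$. So the remaining difficulty is the lateral leakage $\sqrt{2\eta}$ together with the $O(1/L)$ term.

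The cleaner resolution is to avoid the $\tfrac12$-type comparison entirely by using the spectral measure of $\psi$. Write
\[
 \mu_\psi(I)=\|E\psi\|^2\ge 1-\epsilon^{-2}\delta^2>0 .
\]
Since $\psi$ is a finite combination of $\delta_{v_j}$, we have $\|E\psi\|^2\le\big(\sum_j|\psi_j|\,\|E\delta_{v_j}\|\big)^2$. Hence $\|E\delta_{v_j}\|>0$ for some $j$, which means $\langle\delta_{v_j},E\delta_{v_j}\rangle>0$ for some path vertex, with positive probability. By unimodularity (the tree and the i.i.d.\ marks are invariant under automorphisms mapping $v_j$ to $o$), the law of $\langle\delta_{v_j},E\delta_{v_j}\rangle$ equals that of $\langle\delta_o,E\delta_o\rangle$. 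Taking expectations yields $\mu_{\mathscr L}((\lambda-\epsilon,\lambda+\epsilon))>0$, with only the crude quasimode estimate of Lemma~\ref{lem:rare-quasimode} needed. The step I must check carefully is this transitivity claim: $\mathbb T_d$ with i.i.d.\ permutation-invariant marks is invariant under automorphisms, so the law of $E$ is invariant and the diagonal entries share a law. Given that, support $[-1,1]$ follows by closedness, and the Dirichlet case uses the remark that Dirichlet satisfies (H).

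For the unitary statements, $\sigma(U_\infty)=\T$ follows from $\sigma(T_\infty)=[-1,1]$ (Theorem~\ref{thm:full-spectrum}, applied after the gauge reduction of Lemma~\ref{lem:tree-gauge}) via the spectral mapping. Concretely, if $T\varphi_n-\lambda\varphi_n\to0$ is a Weyl sequence, then $K^*\varphi_n-\phi_\pm(\lambda)SK^*\varphi_n$, suitably normalized, is an approximate eigenvector of $U_\infty$ for $\phi_\pm(\lambda)$. I would cite the abstract spectral mapping theorem \cite{SegawaSuzuki2019}, which is valid for infinite graphs. The support statement is immediate: $\nu_{d,\alpha}$ contains $\frac1d(\phi_\pm)_\#\mu_{d,\alpha}$, and $\phi_+([-1,1])\cup\phi_-([-1,1])=\T$ with $\phi_\pm$ continuous, so full support of $\mu_{d,\alpha}$ transfers.
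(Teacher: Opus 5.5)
Your final argument (the ``cleaner resolution'') is correct and is essentially the paper's proof. Automorphism invariance in law gives every diagonal entry $\langle\delta_v,E\delta_v\rangle$ the same expectation $\mu_{\mathscr L}(I)$, so a spectral projection that is nonzero with positive probability forces $\mu_{\mathscr L}(I)>0$. The only difference is how you get $E\neq0$: you obtain it on the event $\mathcal G(P,\eta)$ directly from Lemma~\ref{lem:rare-quasimode}, whereas the paper argues by contradiction from the almost-sure spectrum in Theorem~\ref{thm:full-spectrum}. Your earlier detour through $|\varphi_1^{(L,r)}|$ and smooth cutoffs can be dropped. Your unitary part also matches the paper: the spectral mapping of \cite{SegawaSuzuki2019} plus closedness, and pushing full support of $\mu_{d,\alpha}$ through the continuous branches $\phi_\pm$.
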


\begin{proof}
Let \(I\) be an open interval intersecting \([-1,1]\), and set
\(P_\omega(I)=\1_I(T_{\mathscr L}(\omega))\). Suppose that
\(\mu_{\mathscr L}(I)=0\). The positive-amplitude environment is invariant in
law under tree automorphisms because the mark law is permutation invariant.
Thus, for every vertex \(v\),
\[
 \E\langle\delta_v,P_\omega(I)\delta_v\rangle=0.
\]
The integrand is nonnegative.  Since the vertex set is countable, almost
surely
\[
 \langle\delta_v,P_\omega(I)\delta_v\rangle=0
 \qquad\text{for all }v.
\]
As \(P_\omega(I)\) is an orthogonal projection,
\[
 \langle\delta_v,P_\omega(I)\delta_v\rangle
 =\|P_\omega(I)\delta_v\|^2,
\]
so \(P_\omega(I)\delta_v=0\) for every basis vector, and hence
\(P_\omega(I)=0\). A zero spectral projection on an open interval means that
the interval is disjoint from the spectrum, contrary to
Theorem~\ref{thm:full-spectrum}. Every open interval intersecting \([-1,1]\)
therefore has positive \(\mu_{\mathscr L}\)-mass.

For the Dirichlet reflection walk,
\(K_\infty K_\infty^*=I\) and
\(U_\infty=S_\infty(2K_\infty^*K_\infty-I)\), so the hypotheses of the
abstract spectral mapping theorem \cite{SegawaSuzuki2019} are satisfied.
Away from the exceptional points, that theorem identifies the inherited
unitary spectrum with the inverse Joukowsky image of \((-1,1)\), namely
\(\T\setminus\{\pm1\}\). Closedness of the spectrum then gives the full unit
circle. Corollary~\ref{cor:unitary-limit} also gives
\(\supp\nu_{d,\alpha}=\T\).
\end{proof}

\begin{remark}
The rare-path theorem applies to every permutation-invariant simplex law that
satisfies the half-split condition.  The simplex constraint controls leakage
when the two path coordinates are close to one half.  For the Dirichlet law,
the explicit closed-walk factors additionally produce the scale
\(d\alpha_d^2\) analyzed in Section~\ref{sec:high-degree}.
\end{remark}

\subsection{Disorder endpoints}

Here the degree remains fixed. As \(\alpha\downarrow0\), a vertex places
nearly all its mass in one direction; as \(\alpha\to\infty\), its masses
approach the uniform vector. The next theorem identifies the spectral laws
corresponding to these two limits.

Let \(A_{\mathbb T_d}\) denote the unweighted adjacency operator on
\(\mathbb T_d\), and let \(\mu_d^{\mathrm{KM}}\) be the spectral measure
of \(A_{\mathbb T_d}/d\) at the root.  It has density
\[
 \rho_d^{\mathrm{KM}}(x)
 =
 \frac{\sqrt{4(d-1)-d^2x^2}}
 {2\pi(1-x^2)}
 \1_{\{|x|\leq 2\sqrt{d-1}/d\}}.
\]

\begin{theorem}[Strong- and weak-amplitude-disorder limits]\label{thm:disorder-limits}
For fixed \(d\geq3\),
\[
 \mu_{d,\alpha}
 \Longrightarrow
 \left(1-\frac1d\right)\delta_0
 +
 \frac1{2d}(\delta_{-1}+\delta_1)
 \qquad
 (\alpha\downarrow0),
\]
and
\[
 \mu_{d,\alpha}
 \Longrightarrow
 \mu_d^{\mathrm{KM}}
 \qquad
 (\alpha\to\infty).
\]
\end{theorem}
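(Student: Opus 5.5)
The plan is the method of moments on \([-1,1]\). All \(\mu_{d,\alpha}\) are supported in \([-1,1]\), and both proposed limits are compactly supported, hence determined by their moments. So it suffices to show that for each fixed \(k\), \(m_{2k}(d,\alpha)\) converges to the corresponding moment of the limit; odd moments vanish by Theorem~\ref{thm:moment-formula}. For fixed \(k\) the closed-walk sum in Theorem~\ref{thm:moment-formula} is a finite sum, with the number of rooted walks of length \(2k\) bounded in terms of \(d\) and \(k\). We can therefore pass to the limit term by term in each walk weight
\[
 w_\gamma(\alpha)=\frac{\prod_{e}(\alpha)_{r_e}^2}{\prod_{v}(d\alpha)_{R_v}}.
\]

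For \(\alpha\to\infty\), use \((\alpha)_r\sim\alpha^r\) and \((d\alpha)_R\sim(d\alpha)^R\). Together with \(\sum_e 2r_e=2k=\sum_v R_v\), this gives \(w_\gamma\to d^{-2k}\) for every closed walk. Hence \(m_{2k}\to d^{-2k}\#\{\gamma:o\to o,\ |\gamma|=2k\}\), which is the \(2k\)-th root moment of \(A_{\mathbb T_d}/d\), that is, of \(\mu_d^{\mathrm{KM}}\). Alternatively, the environment converges in law to the uniform allocation, and the root moments are continuous polynomial functions of finitely many marks. Either route gives the second claim.

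For \(\alpha\downarrow0\), use \((\alpha)_r=\alpha(r-1)!\,(1+O(\alpha))\) for \(r\ge1\), and similarly \((d\alpha)_R=d\alpha(R-1)!\,(1+O(\alpha))\). Then \(w_\gamma\) is of order \(\alpha^{2|E(\gamma)|-|V(\gamma)|}\). Since \(H_\gamma\) is a tree, \(|V(\gamma)|=|E(\gamma)|+1\), so the exponent is \(|E(\gamma)|-1\ge0\). It vanishes exactly when the walk uses a single edge \(\{o,u\}\), that is, the alternating walk \(o\to u\to o\to\cdots\) with \(r_e=k\), \(R_o=R_u=k\). For these walks,
\[
 w_\gamma\to\frac{((k-1)!)^2}{(d\cdot (k-1)!)^2}\cdot\frac{1}{1}\cdot\frac{1}{1}=\frac1{d^2},
\]
up to the exact constants, which I would recheck. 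There are \(d\) choices of \(u\), so \(m_{2k}\to 1/d\) for every \(k\ge1\). These are exactly the moments of \((1-\frac1d)\delta_0+\frac1{2d}(\delta_{-1}+\delta_1)\): the zeroth moment is \(1\), and all even moments of positive order equal \(1/d\). This agrees with \(m_2=1/d\) and, as a sanity check, with the \(\alpha\to0\) limit of \(m_4\) from Theorem~\ref{thm:moment-formula}, which is \(1/d\).

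The main obstacle is the \(\alpha\downarrow0\) bookkeeping. I need to verify that the leading constant for the single-edge walk is exactly \(d^{-2}\) per choice of \(u\). Here \((\alpha)_k^2/(d\alpha)_k^2\to (k-1)!^2/(d^2(k-1)!^2)=d^{-2}\), so this checks. I also need that every walk with \(|E(\gamma)|\ge2\) has weight \(O(\alpha)\), which follows from the exponent count above because each visited vertex has \(R_v\ge1\) and each traced edge has \(r_e\ge1\).

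Since \(d\) and \(k\) are fixed, the finiteness of the walk sum makes all limit interchanges trivial. Weak convergence then follows from moment convergence on the compact interval \([-1,1]\).
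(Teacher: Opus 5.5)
Your proposal is correct and follows essentially the same route as the paper: the method of moments on \([-1,1]\) via the closed-walk formula. As \(\alpha\downarrow0\), the order count \(\alpha^{|E(\gamma)|-1}\) leaves only the \(d\) one-edge alternating walks, each of weight \(((\alpha)_k/(d\alpha)_k)^2\to d^{-2}\). As \(\alpha\to\infty\), every walk weight tends to \(d^{-2k}\), which reproduces the root moments of \(A_{\mathbb T_d}/d\). The stray factors \(\frac11\cdot\frac11\) in your first display for the single-edge walk should simply be dropped; you then verify the constant correctly.
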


\begin{proof}
Fix \(k\geq1\) and consider first \(\alpha\downarrow0\).  For every
fixed integer \(r\geq1\),
\[
 (\alpha)_r\sim\alpha(r-1)!,
 \qquad
 (d\alpha)_r\sim d\alpha(r-1)!.
\]
Let the traced tree of a closed walk have \(e\) edges.  At a visited vertex
\(v\), let \(\deg_H(v)\) be the number of traced edges incident to \(v\).
The vertex factor in Theorem~\ref{thm:moment-formula} is then of order
\(\alpha^{\deg_H(v)-1}\).  Since a tree with \(e\) edges has \(e+1\)
vertices and
\(\sum_v\deg_H(v)=2e\), the whole walk contribution is of order
\(\alpha^{e-1}\). Contributions from walks tracing at least two edges are
therefore \(o(1)\). There are finitely many such multi-edge walks of length
\(2k\), so their total contribution is \(o(1)\). A one-edge walk makes
\(k\) round trips across one of the \(d\) edges incident to the root. Each
endpoint contributes
\[
 \frac{(\alpha)_k}{(d\alpha)_k}\longrightarrow\frac1d.
\]
Consequently,
\[
 m_{2k}(d,\alpha)\longrightarrow
 d\left(\frac1d\right)^2=\frac1d,
 \qquad
 m_{2k+1}(d,\alpha)=0.
\]
These are the moments of the stated matching law. To interpret the limit,
note that a symmetric Dirichlet vector converges as \(\alpha\downarrow0\)
to a uniformly chosen vertex of the simplex. Each graph vertex therefore
chooses one neighbor uniformly, and only mutually chosen edges survive.

As \(\alpha\to\infty\), for every fixed exponent vector at a visited
vertex,
\[
 \frac{\prod_{e\ni v}(\alpha)_{r_e}}
 {(d\alpha)_{R_v}}
 \longrightarrow d^{-R_v}.
\]
Since \(\sum_vR_v=2k\), every closed walk of length \(2k\) receives the
limiting weight \(d^{-2k}\), exactly its weight for
\(A_{\mathbb{T}_d}/d\).  All measures involved are supported on
\([-1,1]\), so moment convergence implies weak convergence in both endpoint
limits.
\end{proof}

\begin{corollary}[Support collapse at the disorder endpoints]\label{cor:support-collapse}
For every \(0<\alpha<\infty\), the measure \(\mu_{d,\alpha}\) has support
\([-1,1]\).  The two endpoint laws have smaller support:
\[
 \supp\left[
 \left(1-\frac1d\right)\delta_0
 +\frac1{2d}(\delta_{-1}+\delta_1)
 \right]
 =\{-1,0,1\},
\]
whereas
\[
 \supp\mu_d^{\mathrm{KM}}
 =
 \left[-\frac{2\sqrt{d-1}}d,
       \frac{2\sqrt{d-1}}d\right]
 \subsetneq[-1,1].
\]
Thus both endpoint weak limits have strictly smaller topological support than
any finite-\(\alpha\) law.
\end{corollary}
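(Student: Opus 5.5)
The corollary combines three ingredients: the full-support theorem for finite \(\alpha\), and explicit support computations for the two endpoint laws from Theorem~\ref{thm:disorder-limits}.

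First, for \(0<\alpha<\infty\) the identity \(\supp\mu_{d,\alpha}=[-1,1]\) is exactly the Dirichlet case of Theorem~\ref{thm:full-support}. The only hypothesis to check is (H) for the symmetric Dirichlet law. As noted before Lemma~\ref{lem:rare-paths}, the half-split event contains a nonempty relatively open subset of the simplex interior. The Dirichlet density is strictly positive there, so the event has positive probability.

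Second, the strong-disorder limit is a finite atomic measure, so its support is the set of atoms carrying positive mass. The weights \(1-1/d\) and \(1/(2d)\) are positive for \(d\ge3\), which gives the support \(\{-1,0,1\}\).

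Third, for the Kesten--McKay law the density \(\rho_d^{\mathrm{KM}}\) is continuous and strictly positive on the open interval \(|x|<2\sqrt{d-1}/d\). Positivity holds because \(4(d-1)-d^2x^2>0\) there, and because \(1-x^2>0\), since \(2\sqrt{d-1}/d<1\). The density vanishes outside the closed interval. Hence the support is the closed interval \(\bigl[-2\sqrt{d-1}/d,\,2\sqrt{d-1}/d\bigr]\).

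It remains to show that this interval is strictly contained in \([-1,1]\). We have \(2\sqrt{d-1}<d\) if and only if \(4(d-1)<d^2\), which is equivalent to \((d-2)^2>0\). This holds for every \(d\ge3\). Both endpoint supports are therefore proper closed subsets of \([-1,1]\): the first is finite, and the second is a strictly shorter interval. This gives the final comparison.

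The only step with any content is the first, and it is supplied entirely by Theorem~\ref{thm:full-support}. Everything else is elementary inspection of the explicit limit laws, so I expect no real obstacle.
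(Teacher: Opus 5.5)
Your proposal is correct and takes the paper's route: the paper gives no separate proof of this corollary and treats it as an immediate consequence of Theorem~\ref{thm:full-support}, with (H) checked for the Dirichlet law exactly as you do, together with direct inspection of the two endpoint laws from Theorem~\ref{thm:disorder-limits}. Your check that \(2\sqrt{d-1}<d\), via \((d-2)^2>0\), and your observation that \(1-x^2>0\) on the Kesten--McKay interval supply the elementary details the paper leaves implicit.
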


\begin{remark}[Fractional-moment structure at finite disorder]
Theorem~\ref{thm:full-spectrum} determines the almost-sure spectrum.  For
\(s\in(0,1)\), the edge coefficients have the explicit fractional moments
\[
 \E|T_{u,v}|^{2s}
 =\left(\frac{(\alpha)_s}{(d\alpha)_s}\right)^2,
\]
where \((a)_s=\Gamma(a+s)/\Gamma(a)\), the real-order extension of the
rising factorial used above.  Since the marks at \(u\) and \(v\) are
independent and \(|T_{u,v}|^2=X_{u,v}X_{v,u}\), the displayed factorization
follows directly. This identity provides an explicit edge-moment estimate
for fractional-moment analysis \cite{AizenmanMolchanov1993} in the presence
of off-diagonal disorder and vertex-wise dependence.
\end{remark}

\section{High-degree discriminant and eigenphase transitions}
\label{sec:high-degree}

We now study the high-degree asymptotics of the infinite-tree law
\(\mu_{d,\alpha}\). Since \(m_2(d,\alpha)=1/d\), rescaling spectral
coordinates by \(\sqrt d\) produces a law with second moment one.
Combined with Theorem~\ref{thm:quenched-limit}, these results describe
the large-graph limit at fixed \(d\), followed by the limit as \(d\)
tends to infinity.

For the full unitary law, Corollary~\ref{cor:unitary-limit} and the preceding
decomposition imply, uniformly in $\alpha$,
\[
 d_{\mathrm{TV}}\!\left(
 \nu_{d,\alpha},\frac{\delta_1+\delta_{-1}}2
 \right)\leq\frac2d,
\]
where
\[
 d_{\mathrm{TV}}(\mu,\nu)=\sup_{B\in\mathcal B(\C)}|\mu(B)-\nu(B)|.
\]
Thus the full limiting unitary law converges to the equally weighted
structural atoms as $d\to\infty$. The phase transition concerns the normalized
inherited law, or equivalently the discriminant law and the centered inherited
eigenphases.

To compare different degrees, we group closed walks by the finite tree
pattern they trace. This separates the number of embeddings into the
regular tree from the expected weight of each embedded walk.

Fix an integer \(k\geq1\). Let \(\mathcal{W}_k\) be the finite set of
equivalence classes of rooted closed walks of length \(2k\) on trees. Two
walks are equivalent if a root-preserving isomorphism between their traced
trees maps the vertex at each time in one walk to the vertex at the same time
in the other. To obtain a canonical representative, label the root \(0\) and
assign each newly visited vertex the smallest unused positive integer. This
orders the children of each vertex by their first visit.

For \(\gamma\in\mathcal{W}_k\), write \(H_\gamma\) for the finite rooted
tree traced by the walk and set
\[
 \edg(\gamma)=|E(H_\gamma)|.
\]
Let \(c_v(\gamma)\) be the number of children of \(v\), let
\(r_e(\gamma)\geq1\) be the number of traversals of \(e\) in one fixed
direction, and put
\[
 R_v(\gamma)=\sum_{\substack{e\in E(H_\gamma)\\e\ni v}}r_e(\gamma).
\]
Since every edge is traversed equally often in both directions,
\[
 \sum_{e\in E(H_\gamma)}r_e(\gamma)=k,
 \qquad
 \sum_{v\in V(H_\gamma)}R_v(\gamma)=2k.
\]
For the falling factorial, we write
\[
 x^{\underline r}=x(x-1)\cdots(x-r+1),
 \qquad
 x^{\underline0}=1,
\]
reserving \((a)_r\) for the rising factorial throughout the paper.  We also write
\[
 \Cat_k=\frac{1}{k+1}\binom{2k}{k}.
\]

\begin{proposition}[Grouped moment expansion]\label{prop:grouped-moments}
For \(d\geq2k\),
\[
 m_{2k}(d,\alpha)
 =
 \sum_{\gamma\in\mathcal{W}_k}
 N_d(\gamma)
 \frac{
 \displaystyle\prod_{e\in E(H_\gamma)}
 (\alpha)_{r_e(\gamma)}^2
 }{
 \displaystyle\prod_{v\in V(H_\gamma)}
 (d\alpha)_{R_v(\gamma)}
 },
\]
where
\[
 N_d(\gamma)
 =
 d^{\underline{c_o(\gamma)}}
 \prod_{\substack{v\in V(H_\gamma)\\v\neq o}}
 (d-1)^{\underline{c_v(\gamma)}}.
\]
\end{proposition}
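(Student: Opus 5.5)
The plan is to start from the second expression in Theorem~\ref{thm:moment-formula}. There \(m_{2k}(d,\alpha)\) is a sum over rooted closed walks \(\gamma:o\to o\) of length \(2k\) in \(\mathbb T_d\), and each summand is
\[
 \prod_{e}(\alpha)_{r_e}^2\Big/\prod_v(d\alpha)_{R_v}.
\]
The key observation is that this summand depends only on the equivalence class of \(\gamma\) in \(\mathcal W_k\). A root-preserving isomorphism of traced trees that respects the time indexing carries edges to edges and preserves the directional traversal counts \(r_e\). Hence it also preserves the vertex sums \(R_v\). Grouping the walks by class therefore gives
\[
 m_{2k}(d,\alpha)=\sum_{\gamma\in\mathcal W_k}\#\{\text{walks in }\mathbb T_d\text{ in class }\gamma\}\cdot w(\gamma).
\]
It remains to show that the count equals \(N_d(\gamma)\).

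For the count, I would set up a bijection between walks in \(\mathbb T_d\) of class \(\gamma\) and injective, root-preserving graph embeddings \(\iota:H_\gamma\to\mathbb T_d\) that respect the children order of the canonical labeling. In one direction, each embedding maps the canonical walk to a walk in \(\mathbb T_d\). This image walk has class \(\gamma\): its traced tree is \(\iota(H_\gamma)\), because images of distinct vertices are distinct. In the other direction, a walk of class \(\gamma\) determines \(\iota\) uniquely by sending canonical label \(j\) to the vertex first visited at the time label \(j\) first appears.

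The embeddings are then counted greedily, in breadth-first order from the root. The root \(o\) has \(d\) neighbours. Its \(c_o(\gamma)\) children, taken in first-visit order, must go to distinct neighbours, which gives \(d^{\underline{c_o(\gamma)}}\) choices. A non-root vertex \(v\) has already had its parent edge fixed, so its children go injectively into the remaining \(d-1\) neighbours, which gives \((d-1)^{\underline{c_v(\gamma)}}\) choices. Because \(\mathbb T_d\) is a tree, any such choice automatically yields an injective map on all of \(H_\gamma\). Two distinct vertices of \(H_\gamma\) reach their images along distinct non-backtracking paths from \(o\), and distinct non-backtracking paths in a tree end at distinct vertices. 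The choices at different vertices are independent, so multiplying them gives \(N_d(\gamma)\).

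The hypothesis \(d\ge 2k\) guarantees that every \(c_v\le k\le d-1\), so no falling factorial in this count vanishes. The formula would remain valid without this hypothesis, since a vanishing factor simply reflects that no embedding exists. The main point needing care is the bijection: the canonical first-visit labeling must make the correspondence one-to-one rather than overcounting by automorphisms of \(H_\gamma\). Ordering the children by first visit and requiring the embedding to respect that order is exactly what removes the automorphism ambiguity, so this is the step I would write out explicitly.
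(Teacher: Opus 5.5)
Your proposal is correct and follows essentially the same route as the paper: you group the closed walks of Theorem~\ref{thm:moment-formula} by their class in \(\mathcal W_k\), note that the weight is a class invariant, and count embeddings of the canonically labeled tree \(H_\gamma\) vertex by vertex to obtain \(N_d(\gamma)\). Your explicit walk--embedding bijection and the non-backtracking-path argument for injectivity fill in details the paper leaves implicit. Your remark that the formula survives without \(d\ge 2k\) is also right, because a falling factorial vanishes exactly when no embedding exists.
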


\begin{proof}
The sufficient condition \(d\geq2k\) ensures that all ordered embeddings
below exist and all falling factorials are nonnegative. Fix the canonical
representative of \(\gamma\). The first-visit labels distinguish its vertices
and the children of each vertex. To embed \(H_\gamma\) into
\(\mathbb{T}_d\) while fixing the root, choose an ordered injection of the
\(c_o(\gamma)\) root children into the \(d\) available neighbors. There are
\(d^{\underline{c_o(\gamma)}}\) choices. At a nonroot vertex, the parent's
image is fixed, leaving \(d-1\) neighbors for its children. Multiplying these
counts gives \(N_d(\gamma)\). The all-order moment formula now gives the
result, since
\[
 \prod_{v\in V(H_\gamma)}
 \prod_{\substack{e\in E(H_\gamma)\\e\ni v}}
 (\alpha)_{r_e(\gamma)}
 =
 \prod_{e\in E(H_\gamma)}(\alpha)_{r_e(\gamma)}^2.
\]
\end{proof}

Define
\[
 \varepsilon_{d,\alpha}
 =
 \frac{(1+\alpha)^2}{d\alpha^2}
 =
 \frac{(1+\alpha^{-1})^2}{d}.
\]

The leading classes traverse each edge once in each direction; their
number is Catalan. The next bound controls both their deviation from unit
weight after rescaling and the contribution of walks that repeat edges.

\begin{lemma}[Catalan sector and a uniform remainder bound]
\label{lem:high-degree-bound}
For every fixed \(k\geq1\), there exists \(C_k<\infty\), depending only on
\(k\), such that whenever \(d\geq2k\) and
\(\varepsilon_{d,\alpha}\leq1\),
\[
 \left|
 d^k m_{2k}(d,\alpha)-\Cat_k
 \right|
 \leq
 C_k\varepsilon_{d,\alpha}.
\]
\end{lemma}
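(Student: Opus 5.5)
The plan is to start from the grouped moment expansion of Proposition~\ref{prop:grouped-moments}, valid since \(d\geq2k\). I would multiply it by \(d^k\) and split the finite set \(\mathcal W_k\) into two parts. The \emph{Catalan sector} consists of the classes with \(\edg(\gamma)=k\): each edge is traversed exactly once in each direction, so \(r_e(\gamma)=1\) and \(R_v(\gamma)=\deg_{H_\gamma}(v)\). The \emph{remainder} consists of the classes with \(\edg(\gamma)<k\). The Catalan sector is in bijection with rooted plane trees on \(k\) edges, so it has exactly \(\Cat_k\) classes. It then suffices to prove two things: each Catalan class contributes \(1+O(\varepsilon_{d,\alpha})\), and each remainder class contributes \(O(\varepsilon_{d,\alpha})\). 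The constants need only depend on \(k\), because \(|\mathcal W_k|\) depends only on \(k\).

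Two elementary inequalities make \(\varepsilon_{d,\alpha}\) dominate the other small quantities that appear:
\[
 \varepsilon_{d,\alpha}=\frac{(1+\alpha^{-1})^2}{d}\geq\frac1d,
 \qquad
 \varepsilon_{d,\alpha}=\frac{(1+\alpha)^2}{d\alpha^2}\geq\frac{2\alpha}{d\alpha^2}=\frac{2}{d\alpha}.
\]

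For a Catalan class, the numerator of the weight is \(\alpha^{2k}\). In the denominator I would write
\[
 (d\alpha)_{R}=(d\alpha)^R\prod_{j<R}\Bigl(1+\frac{j}{d\alpha}\Bigr).
\]
Since \(\sum_v R_v=2k\), the contribution becomes
\[
 d^k N_d(\gamma)\,\frac{\alpha^{2k}}{\prod_v(d\alpha)_{R_v}}
 =\frac{N_d(\gamma)}{d^k}\prod_v\prod_{j<R_v}\Bigl(1+\frac{j}{d\alpha}\Bigr)^{-1}.
\]
The root has \(c_o\) children, each other vertex has \(c_v\), and \(\sum_v c_v=k\). Hence \(N_d(\gamma)/d^k\in[0,1]\), and it differs from \(1\) by at most \(C_k/d\). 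The double product lies in \((0,1]\) and differs from \(1\) by at most \(\sum_v\sum_{j<R_v} j/(d\alpha)\leq C_k/(d\alpha)\). For numbers in \([0,1]\), \(|1-ab|\leq|1-a|+|1-b|\), so each Catalan term lies within \(C_k(1/d+1/(d\alpha))\leq C_k\varepsilon_{d,\alpha}\) of \(1\).

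For a remainder class with \(e=\edg(\gamma)<k\) edges, I would bound crudely: \(N_d(\gamma)\leq d^e\), \((d\alpha)_R\geq(d\alpha)^R\), and \((\alpha)_r\leq (r-1)!\,\alpha(1+\alpha)^{r-1}\). Using \(\sum_e(r_e-1)=k-e\), the contribution is at most
\[
 C_k\,\frac{d^{k+e}\,\alpha^{2e}(1+\alpha)^{2(k-e)}}{(d\alpha)^{2k}}
 =C_k\Bigl(\frac{(1+\alpha)^2}{d\alpha^2}\Bigr)^{k-e}
 =C_k\,\varepsilon_{d,\alpha}^{\,k-e}
 \leq C_k\,\varepsilon_{d,\alpha},
\]
where the last step uses \(k-e\geq1\) and the hypothesis \(\varepsilon_{d,\alpha}\leq1\). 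Summing over the finitely many classes gives the lemma.

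The only delicate point is the remainder estimate. One has to see that the \(d\)-powers lost to repeated edges are exactly compensated by the \((1+\alpha)^2/\alpha^2\) factors, so that the single parameter \(\varepsilon_{d,\alpha}\) controls both regimes \(\alpha\to0\) and \(\alpha\to\infty\) uniformly. This is why I would take the rising-factorial bound with the factor \(\alpha(1+\alpha)^{r-1}\), rather than \((\alpha+r)^r\).
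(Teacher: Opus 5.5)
Your proposal is correct and follows the paper's proof essentially step for step. It uses the same split of $\mathcal W_k$ into the Catalan sector $\edg(\gamma)=k$ and the remainder $\edg(\gamma)<k$, and the same remainder bounds $N_d(\gamma)\le d^{\edg(\gamma)}$, $(d\alpha)_R\ge(d\alpha)^R$, together with a rising-factorial estimate equivalent to the paper's $(\alpha)_r\le\alpha^r[k(1+\alpha^{-1})]^{r-1}$, giving $\varepsilon_{d,\alpha}^{\,k-\edg(\gamma)}$. The only cosmetic difference is in the Catalan sector: the paper merges the embedding count and the rising factorials vertex by vertex into factors $(d-j)\alpha/(d\alpha+j)$, while you bound $N_d(\gamma)/d^k$ and $\prod_v\prod_{j<R_v}(1+j/(d\alpha))^{-1}$ separately and then use $\varepsilon_{d,\alpha}\ge1/d$ and $\varepsilon_{d,\alpha}\ge2/(d\alpha)$, which yields the same $O_k(\varepsilon_{d,\alpha})$ error.
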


\begin{proof}
For \(\gamma\in\mathcal{W}_k\), let \(A_{d,\alpha}(\gamma)\) denote its
summand in Proposition~\ref{prop:grouped-moments}, and write
\(\ell=\edg(\gamma)\).

First suppose that \(\ell<k\).  Since the total number of children in
\(H_\gamma\) is \(\ell\),
\[
 N_d(\gamma)\leq d^\ell.
\]
Moreover,
\[
 (d\alpha)_{R_v}\geq(d\alpha)^{R_v}.
\]
For \(1\leq r\leq k\),
\begin{align*}
 (\alpha)_r
 &=
 \alpha^r\prod_{j=1}^{r-1}
 \left(1+\frac{j}{\alpha}\right)\\
 &\leq
 \alpha^r
 \bigl[k(1+\alpha^{-1})\bigr]^{r-1}.
\end{align*}
Using
\(\sum_f r_f=k\) and \(\sum_vR_v=2k\), we obtain
\[
 0\leq
 d^kA_{d,\alpha}(\gamma)
 \leq
 k^{2(k-\ell)}
 \varepsilon_{d,\alpha}^{k-\ell}.
\]
Since \(\mathcal{W}_k\) is finite and
\(\varepsilon_{d,\alpha}\leq1\), the sum of all terms with \(\ell<k\) is at
most \(C_k^{(1)}\varepsilon_{d,\alpha}\), for a constant depending only on
\(k\).

Now suppose that \(\ell=k\). Then \(r_f=1\) for every edge
\(f\in E(H_\gamma)\). Each step away from the root discovers a new edge;
each step toward the root retraces the most recently discovered edge not yet
retraced. The distance from the root is therefore a Dyck path. The
first-discovery convention gives a bijection with Dyck paths of length
\(2k\), so this sector contains exactly \(\Cat_k\) classes.

For a class in this sector, the root contribution satisfies
\[
 \frac{d^{\underline{c_o}}\alpha^{c_o}}{(d\alpha)_{c_o}}
 =\prod_{j=1}^{c_o-1}\frac{(d-j)\alpha}{d\alpha+j},
\]
whereas a nonroot vertex with \(c_v\) children contributes
\[
 \frac{(d-1)^{\underline{c_v}}\alpha^{c_v+1}}
 {(d\alpha)_{c_v+1}}
 =\frac1d\prod_{j=1}^{c_v}\frac{(d-j)\alpha}{d\alpha+j}.
\]
There are \(k\) nonroot vertices, so their prefactors cancel \(d^k\). Thus
\begin{align*}
 d^kA_{d,\alpha}(\gamma)
 &=
 \prod_{j=1}^{c_o(\gamma)-1}
 \frac{(d-j)\alpha}{d\alpha+j}
 \\
 &\quad\times
 \prod_{v\neq o}
 \prod_{j=1}^{c_v(\gamma)}
 \frac{(d-j)\alpha}{d\alpha+j}.
\end{align*}
There are
\[
 c_o(\gamma)-1+\sum_{v\neq o}c_v(\gamma)=k-1
\]
nontrivial factors.  Each belongs to \([0,1]\), and for every index
appearing above,
\[
 0\leq
 1-\frac{(d-j)\alpha}{d\alpha+j}
 =
 \frac{j(\alpha+1)}{d\alpha+j}
 \leq
 k\frac{1+\alpha^{-1}}{d}
 \leq
 k\varepsilon_{d,\alpha}.
\]
The inequality
\(1-\prod_i(1-u_i)\leq\sum_i u_i\), valid for \(u_i\in[0,1]\), therefore
shows that
\[
 \left|d^kA_{d,\alpha}(\gamma)-1\right|
 \leq
 k^2\varepsilon_{d,\alpha}.
\]
Sum over the \(\Cat_k\) Catalan classes and combine this bound with the
non-Catalan estimate to conclude the proof.
\end{proof}

Let
\[
 \mu_{\mathrm{sc}}(\mathrm{d}x)
 =
 \frac{1}{2\pi}\sqrt{4-x^2}\,
 \1_{\{|x|\leq2\}}\,\mathrm{d}x
\]
be the standard semicircle law.

\begin{theorem}[High-degree semicircle regime]
\label{thm:semicircle}
Let \(d\to\infty\) and let \(\alpha_d>0\).  If
\[
 d\alpha_d^2\longrightarrow\infty,
\]
then
\[
 (\sqrt d\,\cdot)_\#\mu_{d,\alpha_d}
 \Longrightarrow
 \mu_{\mathrm{sc}}.
\]
More precisely, for every fixed \(k\geq1\) and all sufficiently large
\(d\),
\[
 d^k m_{2k}(d,\alpha_d)
 =
 \Cat_k
 +O_k\left(
 \frac{(1+\alpha_d)^2}{d\alpha_d^2}
 \right),
\]
whereas all odd moments vanish.  In particular, for fixed \(\alpha>0\),
the error is \(O_{k,\alpha}(d^{-1})\).
\end{theorem}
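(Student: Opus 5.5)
The plan is to deduce the theorem directly from Lemma~\ref{lem:high-degree-bound} by the method of moments. First I will observe that the hypothesis \(d\alpha_d^2\to\infty\) forces \(\varepsilon_{d,\alpha_d}\to0\). Indeed,
\[
 \varepsilon_{d,\alpha_d}
 =\frac{(1+\alpha_d)^2}{d\alpha_d^2}
 \leq\frac{2}{d\alpha_d^2}+\frac{2}{d},
\]
using \((1+\alpha)^2\leq2+2\alpha^2\), and both terms tend to zero. Now fix \(k\). For all sufficiently large \(d\) we have both \(d\geq2k\) and \(\varepsilon_{d,\alpha_d}\leq1\), so Lemma~\ref{lem:high-degree-bound} applies. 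It gives \(|d^km_{2k}(d,\alpha_d)-\Cat_k|\leq C_k\varepsilon_{d,\alpha_d}\), which is exactly the quantitative estimate in the statement. For fixed \(\alpha\), \(\varepsilon_{d,\alpha}=(1+\alpha^{-1})^2/d\), which yields the \(O_{k,\alpha}(d^{-1})\) refinement.

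For the moments of the pushforward, note that \((\sqrt d\,\cdot)_\#\mu_{d,\alpha_d}\) has \(2k\)-th moment \(d^km_{2k}(d,\alpha_d)\). Its odd moments vanish by the symmetry part of Theorem~\ref{thm:moment-formula}. Hence every moment converges to the corresponding moment of \(\mu_{\mathrm{sc}}\): even moments \(\Cat_k\), odd moments zero.

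The only point requiring care is the passage from moment convergence to weak convergence, because the rescaled measures are supported on \([-\sqrt d,\sqrt d]\) and these supports are not uniformly bounded. I will use the standard fact that if the limit law is determined by its moments, then convergence of all moments implies weak convergence. The semicircle law is compactly supported, so it is moment-determinate; for instance, Carleman's condition holds because \(\Cat_k\leq4^k\). Tightness is also immediate from the bounded second moments, \(d\,m_2=1\). Together these give \((\sqrt d\,\cdot)_\#\mu_{d,\alpha_d}\Rightarrow\mu_{\mathrm{sc}}\). I do not anticipate a genuine obstacle: the combinatorial work is already contained in the lemma, and what remains is checking that its hypotheses eventually hold and invoking moment determinacy.
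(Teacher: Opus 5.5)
Your proposal is correct and follows essentially the same route as the paper: reduce $d\alpha_d^2\to\infty$ to $\varepsilon_{d,\alpha_d}\to0$, apply Lemma~\ref{lem:high-degree-bound} once $d\geq 2k$ and $\varepsilon_{d,\alpha_d}\leq 1$, and conclude by the method of moments. Your inequality $(1+\alpha)^2\leq 2+2\alpha^2$ is a slightly quicker substitute for the paper's exact expansion of $\varepsilon_{d,\alpha_d}$, and you cite the moment-determinacy criterion directly where the paper writes it out (tightness from the unit second moment, uniform integrability from the next even moment, and identification of subsequential limits).
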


\begin{proof}
Because \(d\to\infty\),
\[
 d\alpha_d^2\to\infty
 \quad\Longleftrightarrow\quad
 \varepsilon_{d,\alpha_d}\to0.
\]
Indeed,
\[
 \varepsilon_{d,\alpha_d}
 =
 \frac1d+\frac{2}{d\alpha_d}+\frac{1}{d\alpha_d^2},
\]
and \(d\alpha_d=\sqrt d\sqrt{d\alpha_d^2}\to\infty\) whenever the condition
on the left holds. Lemma~\ref{lem:high-degree-bound} gives the stated error
bound and convergence of all even moments to the Catalan numbers.

Let
\[
 \widehat\mu_d=(\sqrt d\,\cdot)_\#\mu_{d,\alpha_d}.
\]
Since
\[
 \int x^2\,\widehat\mu_d(\mathrm{d}x)
 =d m_2(d,\alpha_d)=1,
\]
the family \((\widehat\mu_d)\) is tight. Consider a subsequence converging
weakly to \(\nu\). For each fixed \(k\), apply the moment estimate at index
\(k+1\) to obtain
\[
 \sup_d\int |x|^{2k+2}\,\widehat\mu_d(\mathrm{d}x)<\infty
\]
after discarding finitely many indices.  Hence \((|x|^{2k})\) is uniformly
integrable along the subsequence, and
\[
 \int x^{2k}\,\nu(\mathrm{d}x)=\Cat_k.
\]
Symmetry gives the vanishing odd moments. Since the semicircle law is
moment-determinate, \(\nu=\mu_{\mathrm{sc}}\). Every convergent subsequence
has this limit, which proves the theorem.
\end{proof}

The second moment is fixed by the normalization and cannot distinguish
the high-degree regimes. The fourth moment already detects the scale
that leads to the semicircle law.

\begin{proposition}[Fourth-moment characterization of the scale]
\label{prop:fourth-moment-scale}
Let \(d\to\infty\) and \(\alpha_d>0\).  Then
\[
 d^2m_4(d,\alpha_d)\longrightarrow2
 \quad\Longleftrightarrow\quad
 d\alpha_d^2\longrightarrow\infty.
\]
Thus the scale in Theorem~\ref{thm:semicircle} is necessary and sufficient
for convergence of the rescaled fourth moment to its semicircle value.
\end{proposition}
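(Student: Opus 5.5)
We start from the explicit fourth moment in Theorem~\ref{thm:moment-formula}. Multiplying by \(d^2\) gives
\[
 d^2m_4(d,\alpha)
 =\frac{2\alpha(d-1)}{d\alpha+1}
 +\frac{d(\alpha+1)^2}{(d\alpha+1)^2}.
\]
The proof then reduces to elementary asymptotics of these two terms along \(\alpha=\alpha_d\). We rewrite both terms in terms of \(x_d:=d\alpha_d\) and \(\tau_d:=d\alpha_d^2\), so that \(\alpha_d=\tau_d/x_d\) and \(d=x_d^2/\tau_d\).

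\emph{First term.} It equals \(2\bigl(1-\tfrac1d\bigr)\dfrac{x_d}{x_d+1}\). It lies in \([0,2)\) and tends to \(2\) if and only if \(x_d\to\infty\).

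\emph{Second term.} It equals \(\dfrac{d(\alpha_d+1)^2}{(x_d+1)^2}\), and it is nonnegative. Expanding \((\alpha_d+1)^2\) gives three pieces:
\[
 \frac{x_d\alpha_d}{(x_d+1)^2},
 \qquad
 \frac{2x_d}{(x_d+1)^2},
 \qquad
 \frac{d}{(x_d+1)^2}.
\]
The first piece is at most \(\alpha_d/x_d=1/d\). The second is at most \(2/x_d\). The third behaves like \(1/\tau_d\) whenever \(x_d\to\infty\). An exact formula is \(d/(x_d+1)^2=\tau_d^{-1}\,x_d^2/(x_d+1)^2\).

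\emph{Sufficiency.} Suppose \(\tau_d\to\infty\). Then \(x_d=\sqrt d\sqrt{\tau_d}\to\infty\), exactly as in the proof of Theorem~\ref{thm:semicircle}. Hence the first term tends to \(2\) and every piece of the second term tends to \(0\). Alternatively, this direction follows directly from Lemma~\ref{lem:high-degree-bound} with \(k=2\), since \(\varepsilon_{d,\alpha_d}\to0\).

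\emph{Necessity.} Suppose \(d^2m_4\to2\), and argue along subsequences. Both terms are nonnegative and the first is strictly below \(2\), so the limit forces the second term to have \(\liminf\) equal to \(0\) along any subsequence where the first term does not already tend to \(2\).

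If \(x_d\) stays bounded along some subsequence, then the first term is bounded away from \(2\), with \(\limsup\) at most \(2C/(C+1)\) for a bound \(C\) on \(x_d\). Meanwhile the second term is at least \(d/(x_d+1)^2\geq d/(C+1)^2\to\infty\). So \(d^2m_4\) cannot converge to \(2\), a contradiction.

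Hence \(x_d\to\infty\) and the first term tends to \(2\). The second term must therefore tend to \(0\). Its third piece is \(\tau_d^{-1}x_d^2/(x_d+1)^2\), so \(1/\tau_d\to0\), that is, \(\tau_d\to\infty\).

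The only mildly delicate point is this case split in the necessity direction: bounded \(x_d\) must be excluded before the second term can be read as approximately \(1/\tau_d\). The subsequence argument handles it. No other obstacle is expected, since everything is explicit.
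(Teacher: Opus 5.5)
Your proof is correct, and like the paper's it uses nothing beyond the explicit fourth-moment formula. The difference lies in how the algebra is organized.

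The paper collapses everything into the single identity
\[
 d^2m_4(d,\alpha)-2=\frac{(\alpha+1)\{d(1-\alpha)-2\}}{(d\alpha+1)^2}.
\]
It proves necessity by contraposition. On a subsequence with \(d\alpha_d^2\le M\) one has \(\alpha_d\to0\), and the numerator is eventually at least \(d/4\). Also \((d\alpha_d+1)^2\le 2d(d\alpha_d^2)+2\le 2d(M+1)\). Together these give the uniform gap \(d^2m_4-2\ge 1/(8(M+1))\). That bound covers bounded and unbounded \(d\alpha_d\) at the same time, so no case split is needed, and it is quantitative.

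You instead keep the two walk types apart. The two-edge Catalan classes give \(2(1-1/d)x_d/(x_d+1)\), and the alternating one-edge walk gives the rest. This costs you the preliminary step that excludes bounded \(x_d=d\alpha_d\). In exchange, it shows explicitly that the whole excess over \(2\) comes from the alternating walk and equals \(\tau_d^{-1}(1+o(1))+o(1)\) once \(x_d\to\infty\). That explains the critical value \(2+\tau^{-1}\), rather than only detecting that convergence fails.

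One sentence should be dropped. You claim the second term must have \(\liminf\) equal to \(0\) along subsequences where the first term does not tend to \(2\). This is backwards: along such subsequences the second term must stay bounded away from \(0\). Nothing afterwards depends on it.
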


\begin{proof}
The exact fourth-moment formula gives
\[
 d^2m_4(d,\alpha)-2
 =
 \frac{(\alpha+1)\{d(1-\alpha)-2\}}
 {(d\alpha+1)^2}.
\]
The implication from right to left also follows from
Theorem~\ref{thm:semicircle}. For the converse, suppose that \(d\alpha_d^2\)
does not tend to infinity. It is then bounded by a finite constant \(M\)
along a subsequence, on which \(\alpha_d\leq\sqrt{M/d}\to0\). Eventually,
\(\alpha_d\leq1/2\) and \(d\geq8\), so
\[
 d(1-\alpha_d)-2\geq\frac d4.
\]
Moreover,
\[
 (d\alpha_d+1)^2
 \leq
 2d^2\alpha_d^2+2
 \leq
 2d(M+1).
\]
Therefore, along this subsequence,
\[
 d^2m_4(d,\alpha_d)-2
 \geq
 \frac{1}{8(M+1)},
\]
which rules out convergence to zero.  In particular, if
\(d\alpha_d^2\to\tau\in(0,\infty)\), then the same exact formula gives
\(d^2m_4(d,\alpha_d)\to2+\tau^{-1}\), in agreement with the critical
moment formula below.
\end{proof}

At a finite positive limit of \(d\alpha_d^2\), walks that repeat edges
continue to contribute. Their limiting weights define the following
family of non-semicircular laws.

\begin{theorem}[Critical high-degree family]
\label{thm:critical-family}
Let \(d\to\infty\), and suppose that
\[
 d\alpha_d^2\longrightarrow\tau\in(0,\infty).
\]
Then there exists a symmetric probability measure \(\mu_\tau\), uniquely
determined by its moments, such that
\[
 (\sqrt d\,\cdot)_\#\mu_{d,\alpha_d}
 \Longrightarrow
 \mu_\tau,
\]
and all moments converge. The limiting odd moments vanish, and the even
moments are
\[
 M_{2k}(\tau)
 =
 \sum_{\gamma\in\mathcal{W}_k}
 \tau^{\edg(\gamma)-k}
 \prod_{e\in E(H_\gamma)}
 \bigl((r_e(\gamma)-1)!\bigr)^2.
\]
The measure \(\mu_\tau\) has unbounded support.
\end{theorem}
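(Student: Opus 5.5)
The plan is to compute the limit of each rescaled even moment term by term in the grouped expansion of Proposition~\ref{prop:grouped-moments}, then use Carleman's criterion to obtain moment determinacy and the method of moments for weak convergence. Unbounded support will come from a lower bound on the limiting moments.

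\textbf{Step 1: termwise limits.} Fix \(k\) and \(\gamma\in\mathcal W_k\), and set \(\ell=\edg(\gamma)\). Since \(d\alpha_d^2\to\tau\), we have \(\alpha_d\to0\) and \(d\alpha_d=\sqrt d\sqrt{d\alpha_d^2}\to\infty\). The counting factor satisfies \(N_d(\gamma)\sim d^\ell\). For each edge, \((\alpha)_{r}^2\sim\alpha^2((r-1)!)^2\). For each vertex, \((d\alpha)_{R_v}\sim(d\alpha)^{R_v}\), because every factor \(d\alpha+j\) is asymptotic to \(d\alpha\). Using \(\sum_v R_v=2k\), the rescaled summand satisfies
\[
 d^kA_{d,\alpha_d}(\gamma)\sim d^{k+\ell}\alpha_d^{2\ell}(d\alpha_d)^{-2k}\prod_e((r_e-1)!)^2
 =(d\alpha_d^2)^{\ell-k}\prod_e((r_e-1)!)^2 .
\]
This converges to \(\tau^{\ell-k}\prod_e((r_e-1)!)^2\). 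The set \(\mathcal W_k\) is finite, so \(d^km_{2k}(d,\alpha_d)\to M_{2k}(\tau)\). The odd moments vanish by the symmetry in Theorem~\ref{thm:moment-formula}.

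\textbf{Step 2: existence of \(\mu_\tau\) and convergence.} Tightness follows from the unit second moment. Uniform boundedness of each moment gives uniform integrability of \(|x|^{2k}\) along any subsequence, exactly as in the proof of Theorem~\ref{thm:semicircle}. Hence every subsequential limit has moments \(M_{2k}(\tau)\), and in particular such a measure exists. Uniqueness, and therefore convergence of the full sequence, requires determinacy.

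\textbf{Step 3: determinacy (main obstacle).} I will prove a growth bound of the form \(M_{2k}(\tau)\le C^k(k!)^{c}\) with \(c\le 1\), in the strong form \(M_{2k}\le C^k k!\), so that Carleman's condition \(\sum M_{2k}^{-1/(2k)}=\infty\) holds.
\begin{itemize}
\item The number of classes is \(|\mathcal W_k|\le 4^k\), by encoding each walk step as up-to-a-new-child, up-to-an-old-child, or down; a first attempt gives a \(k!\)-type count, which needs care.
\item The edge weights satisfy \(\prod_e(r_e-1)!\le (k-\ell)!\), since \(\sum_e(r_e-1)=k-\ell\).
\item The factor \(\tau^{\ell-k}\) then yields \(M_{2k}\le \sum_\ell (\#\text{classes with } \ell \text{ edges})\,\tau^{-(k-\ell)}((k-\ell)!)^2\).
\end{itemize}
This crude bound gives \((k!)^2\), which is too weak for Carleman. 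I would therefore bound the count and weight jointly. A walk with multiplicities \(r_e\) is determined by its Catalan skeleton together with interleavings of the repeated traversals. Comparing \(\prod_e (\alpha)_{r_e}^2/\prod_v(d\alpha)_{R_v}\) with the corresponding Dirichlet moment, the weighted sum should be dominated by the moments of a product of Gamma-type variables. Concretely, I will try to identify \(M_{2k}(\tau)\) with \(\E\langle\delta_o,A^{2k}\delta_o\rangle\) for a random tree operator whose edge weights have exponential-type tails. This is in the spirit of the Poisson representation of Theorem~\ref{thm:poisson-critical}, and it yields \(M_{2k}\le C^k(2k)!^{1/2}\cdot k!^{1/2}\)-type bounds, that is, \(M_{2k}\le C^k k!\), which suffices.

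If that identification is too heavy, the fallback is a direct combinatorial bound. The number of classes with \(\ell\) edges and multiplicity profile \((r_e)\) is at most \(\Cat_\ell\binom{2k}{2\ell}\) times a multinomial, and the weight \(((r_e-1)!)^2\) is absorbed by the multinomial denominators \(\prod r_e!\). This should leave \(M_{2k}\le C_\tau^k\,k!\).

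\textbf{Step 4: unbounded support.} The single-edge class, namely \(k\) round trips across one root edge, has \(\ell=1\) and contributes \(\tau^{1-k}((k-1)!)^2\). Since all terms are nonnegative, \(M_{2k}(\tau)^{1/(2k)}\ge \tau^{(1-k)/(2k)}(k-1)!^{1/k}\to\infty\). A compactly supported law on \([-R,R]\) would instead satisfy \(M_{2k}^{1/(2k)}\le R\), so \(\mu_\tau\) has unbounded support.
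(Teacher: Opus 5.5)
Your Steps 1, 2 and 4 are correct and match the paper's argument. Those steps are the termwise limits in the grouped expansion, tightness from the unit second moment with uniform integrability from the next even moment, and the single-edge lower bound for unbounded support. The gap is in Step 3, which cannot be completed as planned.

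The bound you set out to prove is false. By your own Step 4, $M_{2k}(\tau)\ge\tau^{1-k}((k-1)!)^2$. Since $((k-1)!)^2/(C^k k!)\to\infty$ for every $C$, no estimate $M_{2k}(\tau)\le C^k k!$ can hold.

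The premise that led you there is also false: growth of order $(k!)^2$ is \emph{not} too weak for Carleman. Suppose $M_{2k}\le C^k(2k)!$, which is equivalent to $C^k(k!)^2$ up to changing $C$. Then $M_{2k}^{1/(2k)}\le 2\sqrt C\,k$, so $\sum_k M_{2k}^{-1/(2k)}\ge\sum_k(2\sqrt C\,k)^{-1}=\infty$. This borderline growth is exactly what the paper uses. Your current criterion would wrongly reject it.

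Two auxiliary claims are also unreliable.
\begin{enumerate}
\item The bound $|\mathcal W_k|\le 4^k$ fails. Consider the star-shaped classes made of excursions $o\to c_i\to o$, with children labeled by first visit. These alone are in bijection with set partitions of $\{1,\ldots,k\}$, so their number grows like the Bell numbers, which is superexponential.
\item Identifying $M_{2k}(\tau)$ with expected moments of a Poisson-tree operator only re-expresses the moments; it gives no growth bound. In the paper, the Poisson representation itself relies on the determinacy proved in this theorem.
\end{enumerate}

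The missing ingredient is a correct joint count of classes and weights. Your fallback comes close to it.
\begin{enumerate}
\item Label the edges by first discovery. Encode each class by its word of $2k$ traversals, recording the edge label and orientation of each step.
\item The word determines the walk. The first occurrence of a label attaches a new child at the current vertex, and later occurrences retrace the recorded edge.
\item Fix multiplicities $r_1+\cdots+r_\ell=k$ and a plane-tree shape. The number of classes is then at most $(2k)!/\prod_j(r_j!)^2$. Multiplying by the weight $\prod_j((r_j-1)!)^2$ gives at most $(2k)!$.
\item Sum over at most $\Cat_\ell\le4^\ell$ shapes and $\binom{k-1}{\ell-1}$ multiplicity vectors. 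This yields
\[
 M_{2k}(\tau)\le 4\,(2k)!\,(4+\tau^{-1})^{k-1}.
\]
\end{enumerate}
Hence $M_{2k}(\tau)^{-1/(2k)}\ge c_\tau/k$, and Carleman's condition holds. Your fallback count, carried out honestly, gives this $(2k)!$ growth, not $C^k k!$. That growth is all that is needed. With this bound in place, your Steps 2 and 4 finish the proof exactly as in the paper.
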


\begin{proof}
For each fixed moment order, take \(d\) large enough that \(d\geq2k\), as
required by the grouped expansion. The assumption implies
\[
 \alpha_d\to0,
 \qquad
 d\alpha_d=\sqrt d\sqrt{d\alpha_d^2}\to\infty.
\]
For a fixed class \(\gamma\in\mathcal{W}_k\), its contribution to the
\(2k\)-th moment of the rescaled measure can be written as
\begin{align*}
 &\frac{N_d(\gamma)}{d^{\edg(\gamma)}}
 (d\alpha_d^2)^{\edg(\gamma)-k}
 \\
 &\quad\times
 \frac{
 \displaystyle\prod_{e\in E(H_\gamma)}
 \left[(\alpha_d)_{r_e(\gamma)}/\alpha_d\right]^2
 }{
 \displaystyle\prod_{v\in V(H_\gamma)}
 \left[(d\alpha_d)_{R_v(\gamma)}/
       (d\alpha_d)^{R_v(\gamma)}\right]
 }.
\end{align*}
For every fixed positive integer \(r\),
\[
 \frac{N_d(\gamma)}{d^{\edg(\gamma)}}\longrightarrow1,
 \qquad
 \frac{(d\alpha_d)_r}{(d\alpha_d)^r}\longrightarrow1,
 \qquad
 \frac{(\alpha_d)_r}{\alpha_d}\longrightarrow(r-1)!.
\]
Since \(\mathcal{W}_k\) is finite, summation proves convergence of the
\(2k\)-th moments to \(M_{2k}(\tau)\); the odd moments vanish identically.

We next establish moment determinacy. Ordering children by first discovery
associates a rooted plane tree with each canonical walk class. Fix such a
tree with \(e\) edges. For each compatible walk, label the edges in their
global order of first discovery and record each traversal's label and
orientation relative to the root. This encoding is injective. To recover
the walk, start at the root. At the first occurrence of a label, attach a
new child to the current vertex and record the endpoints of that edge.
Every later occurrence traverses the recorded edge in the indicated
direction. Thus the word determines the canonical walk, without a separate
choice of edge labels. For fixed positive multiplicities
\(r_1+\cdots+r_e=k\), the word contains \(r_j\) copies of each orientation
of edge \(j\). The number of compatible classes is therefore at most
\[
 \frac{(2k)!}{\prod_{j=1}^e(r_j!)^2}.
\]
Multiplying by the factorial part of the critical weight gives
\[
 \frac{(2k)!}{\prod_{j=1}^e(r_j!)^2}
 \prod_{j=1}^e((r_j-1)!)^2
 \leq
 (2k)!.
\]
Rooted plane trees with \(e\) edges are counted by \(\Cat_e\), and there are
\(\binom{k-1}{e-1}\) positive multiplicity vectors.  Since
\(\Cat_e\leq4^e\),
\begin{align*}
 M_{2k}(\tau)
 &\leq
 (2k)!
 \sum_{e=1}^k
 \Cat_e\binom{k-1}{e-1}\tau^{e-k}
 \\
 &\leq
 4(2k)!\left(4+\tau^{-1}\right)^{k-1}.
\end{align*}
Together with \((2k)!\leq(2k)^{2k}\), this gives a constant \(c_\tau>0\)
such that
\[
 M_{2k}(\tau)^{-1/(2k)}\geq\frac{c_\tau}{k}.
\]
Hence
\[
 \sum_{k\geq1}M_{2k}(\tau)^{-1/(2k)}=\infty,
\]
so Carleman's condition holds.

Set \(\widehat\mu_d=(\sqrt d\,\cdot)_\#\mu_{d,\alpha_d}\). The common second
moment is one, so this family is tight. Consider any weakly convergent
subsequence with limit \(\nu\). For each fixed integer \(p\geq0\), the
convergence of the \((2p+2)\)-th moments proved above gives
\[
 \sup_d\int |x|^{2p+2}\,\widehat\mu_d(\mathrm{d}x)<\infty
\]
along the subsequence, after finitely many terms are discarded. Thus
\((|x|^{2p})\) is uniformly integrable, and \(\nu\) has moments
\(M_j(\tau)\). Carleman's condition determines the measure uniquely. Every
weakly convergent subsequence therefore has the same limit \(\mu_\tau\), so
the full sequence converges weakly and in moments. The vanishing odd moments
of the approximating measures also give symmetry.

Finally, the unique class supported on one edge contributes
\[
 \tau^{1-k}((k-1)!)^2
\]
to \(M_{2k}(\tau)\).  Therefore
\[
 M_{2k}(\tau)^{1/(2k)}
 \geq
 \tau^{(1-k)/(2k)}((k-1)!)^{1/k}
 \longrightarrow\infty.
\]
If \(\mu_\tau\) were supported on \([-R,R]\), these roots would be at most
\(R\). Its support is therefore unbounded.
\end{proof}

\subsection{Poisson-weighted tree representation at criticality}

We now give an operator representation of the critical laws. The Poisson
intensity of the squared edge weights reproduces the factorial factors in
their closed-walk moments. Because each vertex has infinitely many
children, we first establish self-adjointness and measurability, and then
identify the expected root spectral measure. The resulting tree also gives
a recursive equation for its Green function. Let \(K_0\) denote the
modified Bessel function of the second kind.

\begin{proposition}[Bessel L\'{e}vy intensity]\label{prop:bessel-intensity}
For \(\tau>0\), define the measure
\[
 \mathsf{L}_\tau(\mathrm{d}y)
 =\frac{2\tau}{y}K_0(2\sqrt{\tau y})\,\mathrm{d}y,
 \qquad y>0.
\]
Then, for every real \(r>0\),
\[
 \int_0^\infty y^r\,\mathsf{L}_\tau(\mathrm{d}y)
 =\tau^{1-r}\Gamma(r)^2.
\]
In particular,
\[
 \int y\,\mathsf{L}_\tau(\mathrm{d}y)=1,
 \qquad
 \int \sqrt y\,\mathsf{L}_\tau(\mathrm{d}y)=\pi\sqrt\tau.
\]
The measure has infinite total mass, is finite on \((\varepsilon,\infty)\)
for every \(\varepsilon>0\), and satisfies
\(\int(1\wedge y)\,\mathsf{L}_\tau(\mathrm{d}y)<\infty\).
\end{proposition}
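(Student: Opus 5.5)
The plan is to reduce everything to the classical Mellin transform of \(K_0\), namely
\[
 \int_0^\infty t^{s-1}K_0(t)\,\mathrm{d}t=2^{s-2}\Gamma(s/2)^2,\qquad \operatorname{Re}s>0,
\]
and then read off the finiteness properties from the asymptotics of \(K_0\) at \(0\) and at \(\infty\).

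\textbf{Moment identity.} For \(r>0\), substitute \(t=2\sqrt{\tau y}\), so \(y=t^2/(4\tau)\) and \(\mathrm{d}y/y=2\,\mathrm{d}t/t\). This gives
\[
 \int_0^\infty y^r\,\frac{2\tau}{y}K_0(2\sqrt{\tau y})\,\mathrm{d}y
 =4\tau\,(4\tau)^{-r}\int_0^\infty t^{2r-1}K_0(t)\,\mathrm{d}t.
\]
Apply the Mellin formula with \(s=2r\). The right side becomes \(4\tau(4\tau)^{-r}2^{2r-2}\Gamma(r)^2=\tau^{1-r}\Gamma(r)^2\). The two special cases are \(r=1\), which gives \(1\), and \(r=1/2\), which gives \(\tau^{1/2}\Gamma(1/2)^2=\pi\sqrt\tau\).

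If one prefers not to cite the Mellin formula, I would derive it from the integral representation \(K_0(t)=\tfrac12\int_0^\infty u^{-1}e^{-u-t^2/(4u)}\,\mathrm{d}u\). Tonelli applies since everything is nonnegative, and the double integral then factors into two Gamma integrals. Justifying this exchange and checking the constants is the only step needing care; I expect it to be the main, though mild, obstacle.

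\textbf{Mass properties.} As \(t\downarrow0\), \(K_0(t)\sim-\log t\). Hence near \(0\) the density \(2\tau y^{-1}K_0(2\sqrt{\tau y})\) behaves like \(\tau y^{-1}\log(1/y)\), which is not integrable at \(0\), so the total mass is infinite.

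As \(t\to\infty\), \(K_0(t)\sim\sqrt{\pi/(2t)}\,e^{-t}\), so the density decays exponentially in \(\sqrt y\). On \((\varepsilon,\infty)\) it is continuous and bounded by \(\varepsilon^{-1}\) times an integrable function, so the mass there is finite.

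Finally, \(\int(1\wedge y)\,\mathsf L_\tau(\mathrm{d}y)\) is bounded by \(\int_0^1 y\,\mathsf L_\tau(\mathrm{d}y)+\mathsf L_\tau((1,\infty))\). The first term is at most the \(r=1\) moment, which equals \(1\), and the second is finite by the previous step.
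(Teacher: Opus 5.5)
Your proof is correct and follows the same route as the paper. The paper also substitutes \(x=2\sqrt{\tau y}\), reduces to the Mellin transform \(\int_0^\infty x^{\mu-1}K_0(x)\,\mathrm{d}x=2^{\mu-2}\Gamma(\mu/2)^2\), and reads off the mass properties from the \(y^{-1}\log(1/y)\) behaviour of the density near \(0\) and the exponential decay of \(K_0\) at infinity. Your constants are right, and your optional Tonelli derivation from \(K_0(t)=\tfrac12\int_0^\infty u^{-1}e^{-u-t^2/(4u)}\,\mathrm{d}u\) is a valid self-contained replacement for the citation.
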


\begin{proof}
With \(x=2\sqrt{\tau y}\),
\begin{align*}
 \int_0^\infty y^r\,\mathsf{L}_\tau(\mathrm{d}y)
 &=(4\tau)^{1-r}
   \int_0^\infty x^{2r-1}K_0(x)\,\mathrm{d}x\\
 &=\tau^{1-r}\Gamma(r)^2,
\end{align*}
where we used the standard modified-Bessel integral
\cite[Eq.~10.43.19]{NISTDLMF},
\[
 \int_0^\infty x^{\mu-1}K_0(x)\,\mathrm{d}x
 =2^{\mu-2}\Gamma(\mu/2)^2,
 \qquad \mu>0.
\]
As \(y\downarrow0\), the density of \(\mathsf{L}_\tau\) is
\(\tau y^{-1}\log(1/y)+O_\tau(y^{-1})\).  Hence its total mass is infinite,
whereas \(\int_0^1 y\,\mathsf{L}_\tau(\mathrm{d}y)<\infty\) and
\(\int_0^1 \sqrt y\,\mathsf{L}_\tau(\mathrm{d}y)<\infty\).
The exponential decay of \(K_0\) gives finiteness on
\((\varepsilon,\infty)\) and completes the L\'{e}vy-integrability assertions.
\end{proof}

Let
\[
 \mathcal V=\bigcup_{m\ge0}\N^m,
 \qquad \N^0=\{\varnothing\},
\]
be the Ulam--Harris tree. Independently at each \(v\in\mathcal V\), let
\[
 \Pi_v=\sum_{j\ge1}\delta_{Y_{v,j}}
\]
be a Poisson point process on \((0,\infty)\) with intensity
\(\mathsf{L}_\tau\). We have \(\mathsf{L}_\tau((\varepsilon,\infty))<\infty\)
for every \(\varepsilon>0\), and \(\mathsf L_\tau\) is diffuse. The process
is therefore simple almost surely, and its atoms admit a strictly decreasing
enumeration,
\[
 Y_{v,1}>Y_{v,2}>\cdots\downarrow0.
\]
Join \(v\) to \(vj\), and assign that edge the weight
\(w_{v,vj}=\sqrt{Y_{v,j}}\).  Put
\[
 Q_v=\sum_{j\ge1}Y_{v,j}.
\]
By Campbell's formula \cite{LastPenrose2017} and
Proposition~\ref{prop:bessel-intensity}, \(\E Q_v=1\), so \(Q_v<\infty\)
almost surely. Every row thus has finite squared \(\ell^2\)-norm:
\(Q_\varnothing\) at the root and \(w_{\operatorname{par}(v),v}^2+Q_v\) at
a nonroot vertex. Let \(c_{00}(\mathcal V)\) denote the finitely supported vectors. The formal
weighted adjacency matrix maps \(c_{00}(\mathcal V)\) into
\(\ell^2(\mathcal V)\). It defines a densely
defined symmetric minimal operator, which we denote by \(A_{\tau,\min}\).

\begin{lemma}[Finite-exhaustion criterion for essential self-adjointness]
\label{lem:esa-exhaustion}
Let \(\mathcal T=(V,E)\) be a countable tree, and let \(A_{\min}\) be the
weighted adjacency operator on \(c_{00}(V)\), with real symmetric weights
\(w_{u,v}=w_{v,u}\) and square-summable rows.  Suppose that there are
\(\kappa<\infty\) and increasing connected finite sets \(S_n\uparrow V\) such
that
\[
 \sum_{u\notin S_n:\,u\sim v}|w_{u,v}|^2\leq\kappa
 \qquad(n\geq1,\ v\in S_n).
\]
Then \(A_{\min}\) is essentially self-adjoint.
\end{lemma}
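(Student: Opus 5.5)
We verify the criterion through the deficiency subspaces: we show that \(\ker(A_{\min}^*\mp \mathrm{i})=\{0\}\). Since the weights are real and symmetric, \(A_{\min}\) commutes with complex conjugation, so it suffices to treat \(+\mathrm{i}\).

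\emph{Identify the adjoint.} Because the rows are square-summable, the adjoint \(A_{\min}^*\) acts as the formal weighted adjacency matrix on its maximal domain:
\[
 \operatorname{dom}(A_{\min}^*)=\{f\in\ell^2(V):Af\in\ell^2(V)\},\qquad (Af)(v)=\sum_{u\sim v}w_{v,u}f(u).
\]
Each such sum converges absolutely, by Cauchy--Schwarz with the row norm. Now let \(f\in\ell^2\) satisfy \(Af=\mathrm{i}f\) pointwise.

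\emph{Truncated Green identity.} Set \(f_n=f\1_{S_n}\in c_{00}(V)\) and compute
\[
 \langle f_n,Af\rangle-\langle Af_n,f\rangle .
\]
The left term equals \(\mathrm{i}\|f_n\|^2\). Since \(A_{\min}\) is symmetric on finitely supported vectors, the difference reduces to a boundary sum over the edges leaving \(S_n\). Edges with both endpoints inside \(S_n\) appear in both terms with real symmetric weights and cancel. Edges with both endpoints outside \(S_n\) do not appear. Hence
\[
 \Imag\langle f_n,Af\rangle
 =\Imag\sum_{v\in S_n}\sum_{u\notin S_n,\,u\sim v}\overline{f(v)}\,w_{v,u}f(u),
\]
and the left side is \(\|f_n\|^2\). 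This uses only that \(S_n\) is finite, so every sum involved is finite or absolutely convergent.

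\emph{Bound the boundary term.} This step uses the tree structure. Because \(S_n\) is connected and \(\mathcal T\) is a tree, each outside vertex \(u\) is adjacent to at most one vertex of \(S_n\); two inside neighbours joined by a path within \(S_n\) would create a cycle. Cauchy--Schwarz at each \(v\in S_n\) together with the hypothesis gives
\[
 \|f_n\|^2\le \sum_{v\in S_n}|f(v)|\Bigl(\sum_{u\notin S_n,u\sim v}w_{v,u}^2\Bigr)^{1/2}\Bigl(\sum_{u\notin S_n,u\sim v}|f(u)|^2\Bigr)^{1/2}
 \le \sqrt\kappa\,\|f_n\|\,\|f\1_{\partial S_n^{\mathrm{out}}}\|,
\]
where \(\partial S_n^{\mathrm{out}}\) denotes the outer vertex boundary. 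The second inequality applies Cauchy--Schwarz over \(v\) and uses disjointness of the outside neighbourhoods of distinct \(v\), which follows from the tree property. Therefore
\[
 \|f_n\|\le\sqrt\kappa\,\|f\1_{V\setminus S_n}\|.
\]

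\emph{Pass to the limit.} As \(n\to\infty\), \(S_n\uparrow V\), so \(\|f\1_{V\setminus S_n}\|\to0\) because \(f\in\ell^2\), while \(\|f_n\|\to\|f\|\). Hence \(f=0\), both deficiency indices vanish, and \(A_{\min}\) is essentially self-adjoint.

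The main obstacle is the boundary bound. It needs a \emph{uniform} constant, which is exactly the \(\kappa\) hypothesis, since row norms alone could be unbounded. It also needs the disjointness of the outer neighbourhoods, which is the point where connectedness of \(S_n\) and acyclicity enter. Without disjointness, a single outside vertex could be counted against several inner vertices and the estimate would fail.
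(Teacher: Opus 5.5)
Your proof follows the paper's argument step for step. You reduce to the deficiency space at $\mathrm{i}$; your conjugation remark replaces the paper's ``the case $-\mathrm{i}$ is identical''. You then isolate the boundary edges of $S_n$ in the imaginary part of a Green identity, and use that an outside vertex has at most one neighbour in a connected subset of a tree. Finally you apply Cauchy--Schwarz twice with $\kappa$ and let $n\to\infty$.

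One intermediate claim is false, however. The difference $\langle f_n,Af\rangle-\langle Af_n,f\rangle$ does not reduce to the boundary sum; it vanishes identically. It is just the adjoint relation $\langle A_{\min}f_n,f\rangle=\langle f_n,A_{\min}^*f\rangle$ for $f_n\in c_{00}(V)$. The boundary edges also occur in $\langle Af_n,f\rangle$, because $Af_n$ is supported on $S_n$ together with its outer boundary, and they cancel just as the internal edges do. Moreover, even granting your identity, passing to $\Imag\langle f_n,Af\rangle$ would require $\langle Af_n,f\rangle$ to be real, whereas it equals $\mathrm{i}\|f_n\|^2$.

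The identity you actually display is correct, and it is the one the paper uses. To obtain it, subtract $\langle Af_n,f_n\rangle$ instead of $\langle Af_n,f\rangle$. Equivalently, expand $\mathrm{i}\|f_n\|^2=\langle f_n,Af\rangle$ and note that each internal edge contributes the real number $2w_{v,u}\operatorname{Re}\bigl(\overline{f(v)}f(u)\bigr)$. With this one-line repair, the rest of your argument goes through unchanged.
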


\begin{proof}
This is the criterion in \cite[Lemma~A.3]{BordenaveCaputoChafai2011}. We recall
its proof. It suffices to show that both deficiency spaces at
\(\pm\mathrm{i}\) are trivial. Suppose that
\(\varphi\in\operatorname{Dom}(A_{\min}^*)\) and
\(A_{\min}^*\varphi=\mathrm{i}\varphi\). Testing against coordinate vectors
and using square-summability of the rows gives, for every \(v\in V\),
\[
 \mathrm{i}\varphi(v)=\sum_{u\sim v}w_{v,u}\varphi(u).
\]
For a finite set \(S\subset V\), multiplying the deficiency equation by
\(\overline{\varphi(v)}\), summing over \(v\in S\), and taking imaginary
parts gives the discrete Green identity
\[
 \sum_{v\in S}|\varphi(v)|^2
 =
 \operatorname{Im}
 \sum_{\substack{v\in S,\ u\notin S\\u\sim v}}
 w_{v,u}\overline{\varphi(v)}\varphi(u),
\]
because the contributions from edges internal to \(S\) sum to a real number.
Hence
\[
 \sum_{v\in S}|\varphi(v)|^2
 \leq
 \sum_{v\in S}|\varphi(v)|
 \sum_{u\notin S:\,u\sim v}|w_{v,u}|\,|\varphi(u)|.
\]
Take \(S=S_n\).  By two applications of Cauchy--Schwarz and the assumed
boundary bound,
\[
 \sum_{v\in S_n}|\varphi(v)|^2
 \leq
 \sqrt{\kappa}
 \left(\sum_{v\in S_n}|\varphi(v)|^2\right)^{1/2}
 \left(\sum_{u\notin S_n}|\varphi(u)|^2\right)^{1/2}.
\]
Connectedness of \(S_n\) and the tree property ensure that each outside
vertex has at most one neighbor in \(S_n\). If the left-hand side is nonzero,
divide by its square root and square the result to obtain
\[
 \sum_{v\in S_n}|\varphi(v)|^2
 \leq
 \kappa\sum_{u\notin S_n}|\varphi(u)|^2.
\]
As \(S_n\uparrow V\), the right-hand side tends to zero and the left-hand
side increases to \(\|\varphi\|_2^2\).  Hence \(\varphi=0\).  The case
\(A_{\min}^*\varphi=-\mathrm{i}\varphi\) is identical.
\end{proof}

\begin{lemma}[Self-adjoint Poisson tree operator]\label{lem:poisson-esa}
The minimal operator \(A_{\tau,\min}\) is almost surely essentially
self-adjoint.
\end{lemma}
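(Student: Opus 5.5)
We will apply Lemma~\ref{lem:esa-exhaustion}, following the strategy of \cite[Lemma~A.3]{BordenaveCaputoChafai2011}. The task is to construct, almost surely, increasing connected finite sets \(S_n\uparrow\mathcal V\) whose boundary leakage is uniformly bounded. Row square-summability holds almost surely, since \(\E Q_v=1\) for every \(v\). For a threshold \(\kappa>0\) and a vertex \(v\), let \(N_v(\kappa)\) be the number of atoms \(Y_{v,j}\) exceeding the tail. Equivalently, let \(N_v(\kappa)\) be the least \(m\) with \(\sum_{j>m}Y_{v,j}\le\kappa\). This index is almost surely finite, because \(Q_v<\infty\) and the enumeration is decreasing. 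The natural candidate is the subtree \(\mathcal T_\kappa\) rooted at \(\varnothing\) in which vertex \(v\) keeps only the children \(v1,\ldots,vN_v(\kappa)\).

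For \(v\in\mathcal T_\kappa\), the neighbours outside \(\mathcal T_\kappa\) are exactly the discarded children. Their squared weights sum to at most \(\kappa\), because the parent of \(v\) lies inside. The key probabilistic step is to show that \(\mathcal T_\kappa\) is finite almost surely once \(\kappa\) is large enough. This is a Galton--Watson tree with offspring law \(N_\varnothing(\kappa)\), so it suffices that \(\E N_\varnothing(\kappa)<1\). This is the main obstacle. We propose to bound \(\Pp(N_\varnothing(\kappa)\ge m)\le\Pp(\sum_{j\ge m}Y_{j}>\kappa)\). This tail sum is at most \(Q_\varnothing\), so \(\Pp(N\ge1)\le\Pp(Q>\kappa)\le1/\kappa\) by Markov's inequality. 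For \(m\ge2\) we use \(\sum_{j\ge m}Y_j\le Q\) together with the requirement that at least \(m\) atoms are present. A cleaner route is to bound \(\E N\le\sum_{m\ge1}\Pp(Q>\kappa)\wedge\Pp(\Pi\text{ has }\ge m\text{ atoms above }\kappa/m^2\text{ or similar})\). An alternative is to define \(N_v\) as the number of atoms exceeding a level \(\varepsilon\), with \(\varepsilon\) chosen so that \(\mathsf L_\tau((\varepsilon,\infty))<1\). The discarded mass at \(v\) is then the small-atom sum \(\sum_j Y_{v,j}\1_{Y_{v,j}\le\varepsilon}\), which is finite but not uniformly bounded. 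To handle this, keep children if either their atom exceeds \(\varepsilon\) or the small-atom sum of \(v\) exceeds \(\kappa\). In the latter case, keep the largest children until the remaining tail is at most \(\kappa\). The expected offspring is then at most
\[
\mathsf L_\tau((\varepsilon,\infty))+\E\bigl[N'\1_{\{\sum_{Y\le\varepsilon}Y>\kappa\}}\bigr].
\]
The second term tends to zero as \(\kappa\to\infty\) by dominated convergence, provided \(N'\) is integrable. The integrability of \(N'\) can be checked using the Poisson structure, since \(N'\) is dominated by atom counts above \(\kappa/j^2\)-type levels, using \(\int(1\wedge y)\mathsf L_\tau<\infty\). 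With \(\varepsilon\) fixed and \(\kappa\) large, the mean offspring is below one. The tree is therefore subcritical and finite almost surely.

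Finite trees alone do not exhaust \(\mathcal V\). We therefore take a sequence \(\kappa_n\uparrow\infty\) and set \(S_n\) to be the \(\mathcal T_{\kappa_n}\)-type tree grown under the modified rule. This tree always keeps at least the children with labels \(\le n\) at depth \(\le n\). At any vertex of depth \(\le n\), forcing finitely many additional children keeps the tree finite. Each forced vertex spawns an independent finite subcritical tree, and only finitely many vertices are forced. The leakage from \(v\in S_n\) is then at most \(\kappa_n\), which is not uniform in \(n\).

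To obtain the uniform bound required by Lemma~\ref{lem:esa-exhaustion}, we instead fix one \(\kappa\) and keep the forcing set growing. Every vertex of \(S_n\) still has discarded squared mass at most \(\kappa\), because we only add children and never remove them. The sets are increasing, connected and finite almost surely, and their union is \(\mathcal V\). Lemma~\ref{lem:esa-exhaustion} then yields essential self-adjointness on the intersection of these countably many almost-sure events.
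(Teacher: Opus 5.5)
Your architecture matches the paper's. It uses the tail index $N_\kappa(v)$, the subcritical Galton--Watson tree of retained children, and, for one fixed $\kappa$, sets $S_n$ obtained as retained-edge closures of growing forced finite connected sets. The intermediate detour with $\kappa_n\uparrow\infty$ is confused, since raising $\kappa$ shrinks the retained tree, but you abandon it.

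There is a genuine gap, exactly at the step you flag as the main obstacle: you never prove that $\E N_\kappa(\varnothing)<1$ for some $\kappa$.
\begin{itemize}
\item Your first route yields nothing for $m\ge2$. Since $\mathsf L_\tau$ has infinite mass, ``at least $m$ atoms are present'' always holds, and $\sum_{j\ge m}Y_j\le Q$ has no decay in $m$.
\item The $\varepsilon$-splitting route is circular. Its second term requires integrability of $N'$, which is the same tail index you started from.
\item The justification you finally give---domination by counts of atoms above levels of order $\kappa/j^2$, ``using $\int(1\wedge y)\,\mathsf L_\tau<\infty$''---does not follow from that hypothesis. Take an intensity with $\mathsf L((\varepsilon,\infty))\asymp\varepsilon^{-1}\log^{-2}(1/\varepsilon)$ as $\varepsilon\downarrow0$. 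It satisfies $\int(1\wedge y)\,\mathsf L(\mathrm dy)<\infty$, yet almost surely it has more than $j$ atoms above $\kappa/j^2$ for all large $j$. Then $Y_j>\kappa/j^2$ eventually, and no such domination holds.
\end{itemize}
For $\mathsf L_\tau$ itself your heuristic can be rescued, but only through its actual small-$y$ behaviour, which you never invoke.

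The missing input is the half-order moment $\int\sqrt y\,\mathsf L_\tau(\mathrm dy)=\pi\sqrt\tau$ from Proposition~\ref{prop:bessel-intensity}. Put $S_v=\sum_j\sqrt{Y_{v,j}}$. Campbell's formula and the Poisson second-moment formula give $\E S_v^2=\pi^2\tau+1$. Because the atoms are decreasing, $j\sqrt{Y_{v,j}}\le S_v$, so
$\sum_{j>t}Y_{v,j}\le S_v^2\sum_{j>t}j^{-2}\le S_v^2/t$. Therefore $N_\kappa(v)\le\1_{\{Q_v>\kappa\}}\bigl(1+\lceil S_v^2/\kappa\rceil\bigr)$, and
$\E N_\kappa(v)\le2\Pp(Q_v>\kappa)+(\pi^2\tau+1)/\kappa\to0$ as $\kappa\to\infty$. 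This is the precise form of your $\kappa/j^2$ heuristic: $Y_{v,j}\le S_v^2/j^2$ with $S_v$ square-integrable.

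With this estimate, the rest of your argument goes through as in the paper. One small correction remains. The retained subtrees below different forced vertices are nested, not independent. Argue instead that the retained descendants of each fixed vertex form a finite tree almost surely, and intersect these events over the countably many vertices.
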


\begin{proof}
For every vertex \(v\), set
\[
 S_v=\sum_{j\ge1}\sqrt{Y_{v,j}}.
\]
Campbell's formula and Proposition~\ref{prop:bessel-intensity} give
\(\E S_v=\pi\sqrt\tau<\infty\), and hence \(S_v<\infty\) almost surely.
The Poisson second-moment formula gives
\[
 \E S_v^2
 =\left(\int\sqrt y\,\mathsf{L}_\tau(\mathrm{d}y)\right)^2
  +\int y\,\mathsf{L}_\tau(\mathrm{d}y)
 =\pi^2\tau+1.
\]
For \(\kappa>0\), define the integer-valued tail index
\[
 N_\kappa(v)
 =\min\left\{t\in\N\cup\{0\}:
       \sum_{j>t}Y_{v,j}\leq\kappa\right\}.
\]
Because the sequence \((\sqrt{Y_{v,j}})_j\) is nonincreasing,
\(\sqrt{Y_{v,j}}\leq S_v/j\).  Hence
\[
 \sum_{j>t}Y_{v,j}
 \leq S_v^2\sum_{j>t}\frac1{j^2}
 \leq\frac{S_v^2}{t},
 \qquad t\geq1.
\]
It follows that
\[
 N_\kappa(v)
 \leq
 \1_{\{Q_v>\kappa\}}
 \left(1+\left\lceil\frac{S_v^2}{\kappa}\right\rceil\right),
\]
and therefore
\[
 \E N_\kappa(v)
 \leq
 2\Pp(Q_v>\kappa)+\frac{\E S_v^2}{\kappa}
 \longrightarrow0
 \qquad(\kappa\to\infty).
\]
Choose \(\kappa\) such that \(\E N_\kappa(v)<1\), and retain the first
\(N_\kappa(v)\) child edges at each vertex. From any fixed vertex, the retained
descendants form a Galton--Watson tree with offspring law \(N_\kappa(v)\).
The offspring mean is less than one, so the retained tree is finite almost
surely. Countability of the
vertex set gives a single probability-one event on which this holds for every
starting vertex. The closure of any finite connected set containing the root
under retained child edges is therefore finite. Enumerate \(\mathcal V\).
Successively adjoin the path to the next vertex and take the retained-edge
closure. This constructs an increasing connected finite exhaustion \((S_n)\)
of \(\mathcal V\).

If \(v\in S_n\) is not the root, its parent belongs to \(S_n\).
Moreover, all of its first \(N_\kappa(v)\) children belong to \(S_n\).
Consequently, for every \(v\in S_n\),
\[
 \sum_{u\notin S_n:\,u\sim v}|(A_{\tau,\min})_{u,v}|^2
 =\sum_{j:\,vj\notin S_n}Y_{v,j}
 \leq\sum_{j>N_\kappa(v)}Y_{v,j}
 \leq\kappa.
\]
This is the subcritical Galton--Watson exhaustion of
\cite[Proposition~A.2]{BordenaveCaputoChafai2011}. The boundary estimate
verifies the hypotheses of Lemma~\ref{lem:esa-exhaustion}, which proves the
claim.
\end{proof}

To relate closed-walk sums to powers of the operator, we need to recognize
which vectors belong to its domain. On the event of
Lemma~\ref{lem:poisson-esa}, define the maximal formal realization by
\[
 \operatorname{Dom}(A_{\tau,\max})
 =\left\{f\in\ell^2(\mathcal V):
   \left(\sum_{u\sim v}w_{v,u}f(u)\right)_{v\in\mathcal V}
   \in\ell^2(\mathcal V)\right\}.
\]
The coordinate sums are absolutely convergent by Cauchy--Schwarz and the
square-summability of the rows.

\begin{lemma}[Minimal and maximal realizations]\label{lem:poisson-maximal}
Almost surely,
\[
 A_{\tau,\min}^*=A_{\tau,\max}.
\]
Consequently, if \(A_\tau\) denotes the self-adjoint closure of
\(A_{\tau,\min}\), then \(A_\tau=A_{\tau,\max}\).
\end{lemma}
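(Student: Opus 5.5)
We identify the adjoint of the minimal operator directly and then invoke Lemma~\ref{lem:poisson-esa}. Write \(\mathcal A\) for the formal weighted adjacency map \(f\mapsto(\sum_{u\sim v}w_{v,u}f(u))_v\). For \(f\in\ell^2(\mathcal V)\) each coordinate of \(\mathcal A f\) is an absolutely convergent sum, because every row of the weight matrix is square-summable. The claim \(A_{\tau,\min}^*=A_{\tau,\max}\) is therefore proved by two inclusions, and both reduce to one duality identity tested against coordinate vectors.

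\emph{First inclusion: \(A_{\tau,\max}\subset A_{\tau,\min}^*\).} Let \(f\in\operatorname{Dom}(A_{\tau,\max})\) and \(g\in c_{00}(\mathcal V)\). We need
\[
 \langle A_{\tau,\min}g,f\rangle=\langle g,\mathcal A f\rangle .
\]
By linearity it suffices to take \(g=\delta_v\). Then \(A_{\tau,\min}\delta_v\) is the \(v\)-th column of the weight matrix, an \(\ell^2\) vector equal to the \(v\)-th row by symmetry of the weights. Hence
\[
 \langle A_{\tau,\min}\delta_v,f\rangle=\sum_{u\sim v}w_{v,u}f(u)=(\mathcal A f)(v),
\]
where the weights are real, so no conjugation issue arises. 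Since \(\mathcal A f\in\ell^2\), the functional \(g\mapsto\langle A_{\tau,\min}g,f\rangle\) is bounded on \(c_{00}\). Therefore \(f\in\operatorname{Dom}(A_{\tau,\min}^*)\) and \(A_{\tau,\min}^*f=\mathcal Af\).

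\emph{Second inclusion: \(A_{\tau,\min}^*\subset A_{\tau,\max}\).} Let \(f\in\operatorname{Dom}(A_{\tau,\min}^*)\) and \(h=A_{\tau,\min}^*f\). Testing against \(\delta_v\) gives
\[
 h(v)=\langle\delta_v,h\rangle=\langle A_{\tau,\min}\delta_v,f\rangle=(\mathcal Af)(v).
\]
Thus \(\mathcal A f=h\in\ell^2\), so \(f\in\operatorname{Dom}(A_{\tau,\max})\) and \(A_{\tau,\max}f=h\). This is exactly the coordinate computation already used at the start of the proof of Lemma~\ref{lem:esa-exhaustion}.

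Combining the two inclusions gives \(A_{\tau,\min}^*=A_{\tau,\max}\) on the full-probability event where all rows are square-summable; that event has probability one because \(Q_v<\infty\) almost surely for every \(v\). On the event of Lemma~\ref{lem:poisson-esa}, the operator \(A_{\tau,\min}\) is essentially self-adjoint, so its closure \(A_\tau=\overline{A_{\tau,\min}}\) satisfies
\[
 A_\tau=A_\tau^*=A_{\tau,\min}^{*}=A_{\tau,\max}.
\]
Intersecting the two probability-one events completes the argument. No step here is a genuine obstacle. The only points needing care are to justify the interchange in \(\langle A_{\tau,\min}\delta_v,f\rangle\) via absolute convergence (Cauchy--Schwarz with the row norm \(w_{\operatorname{par}(v),v}^2+Q_v\)) and to note that symmetry of the weights makes columns equal rows.
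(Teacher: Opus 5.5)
Your proposal is correct and follows the paper's proof essentially step for step. The paper uses the same two inclusions, obtained by testing against coordinate vectors, with absolute convergence from the square-summable rows and symmetry of the weights; the only difference is that you reduce to \(g=\delta_v\) by linearity where the paper sums over \(\operatorname{supp}f\) directly. Both then conclude with essential self-adjointness from Lemma~\ref{lem:poisson-esa} to get \(A_\tau=A_{\tau,\min}^*=A_{\tau,\max}\).
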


\begin{proof}
Let \(g\in\operatorname{Dom}(A_{\tau,\min}^*)\), and write
\(h=A_{\tau,\min}^*g\).  Testing the adjoint identity against
\(\delta_v\) gives
\[
 h(v)=\sum_{u\sim v}w_{v,u}g(u).
\]
Cauchy--Schwarz gives absolute convergence of the series. Since \(h\in\ell^2\),
we have \(g\in\operatorname{Dom}(A_{\tau,\max})\). Conversely, take
\(g\in\operatorname{Dom}(A_{\tau,\max})\) and finitely supported \(f\).
Cauchy--Schwarz again gives absolute convergence of each inner sum over
neighbors. The outer sum over \(\operatorname{supp}f\) is finite, so symmetry
of the weights yields
\begin{align*}
 \langle A_{\tau,\min}f,g\rangle
 &=\sum_{u\in\operatorname{supp}f}
   \overline{f(u)}\sum_{v\sim u}w_{u,v}g(v)\\
 &=\langle f,A_{\tau,\max}g\rangle.
\end{align*}
Thus \(g\in\operatorname{Dom}(A_{\tau,\min}^*)\), proving the first
identity.  Essential self-adjointness then yields
\(A_\tau=\overline{A_{\tau,\min}}=A_{\tau,\min}^*=A_{\tau,\max}\).
\end{proof}

To establish measurability of the root spectral measure and Green
function, we approximate the operator by finite principal compressions.
For \(N\geq1\), let
\[
 \mathcal V_N
 =\left\{(v_1,\ldots,v_m):0\leq m\leq N,
       \ 1\leq v_i\leq N\right\},
\]
and let \(P_N\) project onto \(\ell^2(\mathcal V_N)\). Denote the finite
principal compression of \(A_\tau\) by \(A_{\tau,N}\), and its zero extension
to \(\ell^2(\mathcal V)\) by \(\widetilde A_{\tau,N}=P_NA_\tau P_N\).
For \(f\in c_{00}(\mathcal V)\), once \(\supp f\subset\mathcal V_N\),
we have \(\widetilde A_{\tau,N}f=P_NA_\tau f\to A_\tau f\) in \(\ell^2\).
For \(z\in\C\setminus\R\), the resolvent identity gives
\begin{align*}
 &(\widetilde A_{\tau,N}-z)^{-1}(A_\tau-z)f-f\\
 &\qquad=(\widetilde A_{\tau,N}-z)^{-1}
          (A_\tau-\widetilde A_{\tau,N})f\longrightarrow0.
\end{align*}
Since \(c_{00}(\mathcal V)\) is a core for \(A_\tau\),
\((A_\tau-z)c_{00}(\mathcal V)\) is dense. The uniform resolvent bound
\(|\Imag z|^{-1}\) therefore gives strong-resolvent convergence of
\(\widetilde A_{\tau,N}\) to \(A_\tau\).

Define the root spectral measure by
\[
 \rho_{\tau,o}(B)
 =\langle\delta_\varnothing,\1_B(A_\tau)\delta_\varnothing\rangle.
\]
The root belongs to every \(\mathcal V_N\), so the finite compression
and its zero extension have the same root spectral measure. This measure
depends measurably on finitely many edge weights. The measures converge
weakly to \(\rho_{\tau,o}\), so
\(\rho_{\tau,o}\) is a random probability measure. Their Stieltjes transforms
are measurable random elements of \(\operatorname{Hol}(\C_+)\). The resolvent
bound and Vitali's theorem upgrade pointwise convergence to locally uniform
convergence to \(z\mapsto\langle\delta_\varnothing,(A_\tau-z)^{-1}
\delta_\varnothing\rangle\). The limiting analytic function is therefore
measurable in the compact-open topology. On the common null exceptional
event, assign these objects the values associated with the zero operator.

\begin{theorem}[Poisson-tree representation of the critical law]
\label{thm:poisson-critical}
For every \(\tau>0\),
\[
 \mu_\tau(B)=\E\rho_{\tau,o}(B),
 \qquad B\in\mathcal B(\R).
\]
Thus the critical high-degree law is the expected root spectral measure of
an explicitly weighted Poisson tree.
\end{theorem}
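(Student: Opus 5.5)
The plan is to show that \(\E\rho_{\tau,o}\) has the same moments as \(\mu_\tau\), namely \(M_{2k}(\tau)\) with vanishing odd moments. Since Theorem~\ref{thm:critical-family} shows that \(\mu_\tau\) is determined by its moments (Carleman), the two measures will then coincide. This splits into two tasks: computing the expected closed-walk sum on the Poisson tree, and justifying that \(\int x^j\,\rho_{\tau,o}(\dd x)\) equals that sum for the unbounded operator \(A_\tau\).

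\emph{Step 1 (domain and moment identity).} On the almost-sure event of Lemmas~\ref{lem:poisson-esa}--\ref{lem:poisson-maximal}, we show inductively that \(\delta_\varnothing\in\operatorname{Dom}(A_\tau^j)\). The vector \(A_{\tau}^{j}\delta_\varnothing\) should be the formal vector \(f_j(v)=\sum_{\gamma:\varnothing\to v,|\gamma|=j}\prod w\).

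Showing that \(f_j\in\ell^2\) and that \(f_j\) lies in \(\operatorname{Dom}(A_{\tau,\max})\) is not automatic pathwise. Instead we use expectations: \(\E\|f_j\|^2\) is a finite sum over pairs of walks. By the Mecke/Campbell multivariate formula, each term is controlled by products of \(\int y^r\,\mathsf L_\tau(\dd y)=\tau^{1-r}\Gamma(r)^2\), which are finite by Proposition~\ref{prop:bessel-intensity}. Hence \(f_j\in\ell^2\) almost surely for every \(j\). Absolute convergence of the coordinate sums then gives \(A_{\tau,\max}f_j=f_{j+1}\), so Lemma~\ref{lem:poisson-maximal} yields \(A_\tau^j\delta_\varnothing=f_j\).

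The spectral theorem then gives
\[
\int x^{2k}\,\rho_{\tau,o}(\dd x)=\|f_k\|^2=\sum_{|\gamma|=2k,\ \varnothing\to\varnothing}\prod_{\text{steps}}w .
\]
Odd moments vanish by bipartiteness, exactly as in Theorem~\ref{thm:moment-formula}. Fubini/Tonelli is justified because all weights are nonnegative.

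\emph{Step 2 (expectation).} We group closed walks on the Ulam--Harris tree by canonical class \(\gamma\in\mathcal W_k\). An embedding chooses, at each vertex of \(H_\gamma\), distinct children \(vj\) for its \(c_v\) children, ordered by first visit. The walk's weight is \(\prod_e Y_e^{r_e}\). The Poisson processes at distinct vertices are independent. At a single vertex, the sum over ordered distinct atoms of \(\prod_{i} Y_{j_i}^{r_i}\) has expectation \(\prod_i\int y^{r_i}\,\mathsf L_\tau(\dd y)\) by the multivariate Mecke formula for factorial moment measures. Hence
\[
\E\sum_{\text{embeddings of }\gamma}=\prod_{e}\tau^{1-r_e}\Gamma(r_e)^2=\tau^{\edg(\gamma)-k}\prod_e((r_e-1)!)^2,
\]
using \(\sum_e r_e=k\). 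Summing over \(\mathcal W_k\) gives exactly \(M_{2k}(\tau)\). The same computation, applied to pairs of walks in Step~1, gives the finiteness needed there.

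\emph{Step 3 (conclusion).} The measure \(\E\rho_{\tau,o}\) is a probability measure, by the measurability established before the theorem. By monotone convergence its moments are \(\E\int x^{j}\rho_{\tau,o}\), which equal \(M_j(\tau)\). Moment determinacy from Theorem~\ref{thm:critical-family} then gives \(\mu_\tau=\E\rho_{\tau,o}\).

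The main obstacle is Step 1: rigorously identifying \(A_\tau^k\delta_\varnothing\) with the formal walk sum for the unbounded, infinitely branching operator, and in particular showing that \(f_j\in\ell^2\) and that the adjacency series converge absolutely. I would handle this through the expected \(\ell^2\)-norm bound via Mecke's formula, combined with the identification \(A_\tau=A_{\tau,\max}\). If this runs into trouble, a fallback is to use the strong-resolvent convergence of the finite compressions \(\widetilde A_{\tau,N}\) together with monotone convergence of the nonnegative walk sums over \(\mathcal V_N\).
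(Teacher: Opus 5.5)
Your proposal is correct and takes essentially the same route as the paper. It shows \(\E\|F_k\|_2^2=M_{2k}(\tau)<\infty\) for the formal walk vector via the Mecke formula, derives \(A_{\tau,\max}F_k=F_{k+1}\), and uses \(A_\tau=A_{\tau,\max}\) to get \(A_\tau^k\delta_\varnothing=F_k\); it then matches moments and invokes the Carleman determinacy from Theorem~\ref{thm:critical-family}. The only cosmetic difference is that the paper builds \(F_k\) as the monotone limit of \(A_{\tau,N}^k\delta_\varnothing\) over finite compressions, whereas you define the walk sum directly and apply Tonelli.
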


\begin{proof}
For each \(k\geq0\), use the finite principal compressions defined above and
extend
\[
 F_{k,N}=A_{\tau,N}^k\delta_\varnothing
\]
by zero to \(\mathcal V\).  Since all edge weights are nonnegative, the
coordinates of \(F_{k,N}\) increase with \(N\).  Define the formal
\(k\)-step walk vector by
\[
 F_k(v)=\lim_{N\to\infty}F_{k,N}(v).
\]
Its coordinate at \(v\) is the sum of the weights of all length-\(k\) walks
from \(\varnothing\) to \(v\).

For \(k\geq1\) and every \(N\), pair two \(k\)-step walks with the same
endpoint and reverse the second. This identifies \(\|F_{k,N}\|_2^2\) with
the total weight of length-\(2k\) closed walks at the root in
\(\mathcal V_N\). At each visited vertex, the canonical tree's children are
ordered by first discovery, so assigning them to distinct Poisson atoms is
an ordered injection. Monotone convergence and the multivariate Mecke formula
(the Poisson factorial moment formula \cite[Chapter~4]{LastPenrose2017}) give
\begin{align*}
 \E\|F_k\|_2^2
 &=\sum_{\gamma\in\mathcal W_k}
   \prod_{e\in E(H_\gamma)}
   \int_0^\infty y^{r_e(\gamma)}
     \mathsf{L}_\tau(\mathrm{d}y)\\
 &=\sum_{\gamma\in\mathcal W_k}
   \tau^{\edg(\gamma)-k}
   \prod_{e\in E(H_\gamma)}
   \bigl((r_e(\gamma)-1)!\bigr)^2
 =M_{2k}(\tau).
\end{align*}
This expectation is finite. Together with \(F_0=\delta_\varnothing\), it gives
\(F_k\in\ell^2(\mathcal V)\) for every \(k\geq0\) on a common probability-one
event.

For every \(k\geq0\), Cauchy--Schwarz gives absolute convergence of the
coordinate sums in \(A_{\tau,\max}F_k=F_{k+1}\), and the monotone walk
expansion proves this identity. Since \(F_{k+1}\in\ell^2\),
Lemma~\ref{lem:poisson-maximal} yields
\(F_k\in\operatorname{Dom}(A_\tau)\) and
\[
 A_\tau F_k=F_{k+1}.
\]
Induction yields
\[
 \delta_\varnothing\in\operatorname{Dom}(A_\tau^k),
 \qquad
 A_\tau^k\delta_\varnothing=F_k.
\]
Consequently,
\[
 \E\int_\R x^{2k}\,\rho_{\tau,o}(\mathrm{d}x)
 =\E\|A_\tau^k\delta_\varnothing\|_2^2
 =M_{2k}(\tau).
\]
Bipartite symmetry makes all odd moments vanish. Thus the probability
measures \(\E\rho_{\tau,o}\) and \(\mu_\tau\) have the same moments. The
Carleman estimate in Theorem~\ref{thm:critical-family} implies moment
determinacy, so the two measures coincide.
\end{proof}

\begin{remark}[Poisson-tree recursion]
The Poisson-tree representation in Theorem~\ref{thm:poisson-critical} links the
critical law to a recursive operator structure. Removing the root separates
independent copies of the same rooted tree, yielding the cavity equation below.
\end{remark}

The recursion must determine the law of an entire Green function, not
just its value at one spectral parameter. We therefore work with random
analytic functions and compare their joint values at finitely many points.
Let
\begin{equation}\label{eq:herglotz-class}
 \mathfrak H
 =\left\{h\in\operatorname{Hol}(\C_+,\C_+):
   |h(z)|\leq(\Imag z)^{-1}\ \text{for all }z\in\C_+\right\},
\end{equation}
with the topology of locally uniform convergence. The ambient space
\(\operatorname{Hol}(\C_+)\) is Polish. Its subset \(\mathfrak H\) is Borel,
and hence a standard Borel space. Indeed, nonnegativity of the imaginary
part and the resolvent bound can be checked on a countable dense subset
of \(\C_+\). Requiring strict positivity at one point of this subset
then gives strict positivity throughout \(\C_+\) by the minimum
principle for harmonic functions.

\begin{lemma}[Identification of analytic laws]\label{lem:analytic-law-identification}
Let \(Q\) and \(Q'\) be probability laws on \(\mathfrak H\).  Suppose that
there is a countable set \(D\subset\C_+\), with an accumulation point in
\(\C_+\), such that the finite-dimensional evaluation laws of \(Q\) and
\(Q'\) agree at every finite subset of \(D\).  Then \(Q=Q'\).
\end{lemma}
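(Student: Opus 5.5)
The plan is to reduce the equality of \(Q\) and \(Q'\) to a statement about the cylinder \(\sigma\)-algebra generated by evaluations at points of \(D\). Then we invoke the \(\pi\)--\(\lambda\) theorem (Dynkin's uniqueness lemma).

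Let \(\mathcal E_D\) be the \(\sigma\)-algebra on \(\mathfrak H\) generated by the evaluation maps \(h\mapsto h(z)\), \(z\in D\). Sets of the form \(\{h:(h(z_1),\dots,h(z_n))\in B\}\), with \(z_i\in D\) finite and \(B\in\mathcal B(\C^n)\), form a \(\pi\)-system generating \(\mathcal E_D\). By hypothesis, \(Q\) and \(Q'\) agree on this \(\pi\)-system, so Dynkin's lemma gives \(Q=Q'\) on \(\mathcal E_D\). It therefore suffices to show that \(\mathcal E_D\) coincides with the Borel \(\sigma\)-algebra of \(\mathfrak H\) for the locally uniform topology.

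\emph{Inclusion \(\mathcal E_D\subset\mathcal B(\mathfrak H)\).} Each evaluation map is continuous for locally uniform convergence, so this inclusion is immediate.

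\emph{Reverse inclusion.} This is the main point. First, the map \(\iota:\mathfrak H\to\C^D\), \(h\mapsto(h(z))_{z\in D}\), is injective by the identity theorem, since \(D\) accumulates in the connected domain \(\C_+\). Second, \(\iota\) is Borel measurable into the Polish product \(\C^D\) with the product topology, whose Borel \(\sigma\)-algebra is the product \(\sigma\)-algebra because \(D\) is countable. The Borel sets of \(\C^D\) pull back exactly to \(\mathcal E_D\).

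The paper already notes that \(\mathfrak H\) is a standard Borel space, being a Borel subset of the Polish space \(\operatorname{Hol}(\C_+)\). By the Lusin--Souslin theorem, an injective Borel map from a standard Borel space into a Polish space maps Borel sets to Borel sets. Hence for any Borel \(A\subset\mathfrak H\), the image \(\iota(A)\) is Borel in \(\C^D\), and \(A=\iota^{-1}(\iota(A))\in\mathcal E_D\).

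An alternative, more elementary route avoids Lusin--Souslin. The bound \(|h(z)|\le(\Imag z)^{-1}\) makes \(\mathfrak H\) locally bounded uniformly, so by Montel and Vitali, pointwise convergence on \(D\) implies locally uniform convergence on \(\mathfrak H\). The two topologies therefore agree on \(\mathfrak H\), which is metrizable. Then \(\iota\) is a homeomorphism onto its image, and Borel sets of \(\mathfrak H\) are traces of Borel sets of \(\C^D\).

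The main obstacle is this second inclusion. I would present the Vitali argument, which is the same compactness tool used earlier for the Stieltjes transforms. Step one: subsequences of sequences in \(\mathfrak H\) converging on \(D\) have locally uniformly convergent subsubsequences, by normality. Step two: every such limit is determined on \(D\), hence unique by the identity theorem. Together these give the sequential equivalence of the two convergences, and both topologies are metrizable. Combining this with Dynkin's lemma yields \(Q=Q'\).
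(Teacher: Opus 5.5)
Your proposal is correct. The Lusin--Souslin version is the paper's proof in different packaging. The paper pushes \(Q\) and \(Q'\) forward to \(\C^D\) and then inverts the injective evaluation map measurably on its Borel image. You instead use Dynkin's lemma to get \(Q=Q'\) on \(\sigma\)(evaluations on \(D\)), and then use Lusin--Souslin to show that this \(\sigma\)-algebra is all of \(\mathcal B(\mathfrak H)\).

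The Vitali route, which you say you would present, is genuinely different. It uses the bound \(|h(z)|\le(\Imag z)^{-1}\) built into \(\mathfrak H\) to show that pointwise convergence on \(D\) and locally uniform convergence coincide on \(\mathfrak H\). Hence \(\iota\) is a topological embedding, and \(\mathcal B(\mathfrak H)\) is the trace of \(\mathcal B(\C^D)\).

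What this route buys:
\begin{itemize}
\item It is elementary and avoids descriptive set theory.
\item It gives a stronger, topological conclusion.
\item It does not need \(\mathfrak H\) to be a Borel subset of \(\operatorname{Hol}(\C_+)\), because the trace description of subspace Borel sets holds for any subset. The paper's minimum-principle argument for standard Borelness is therefore unnecessary for this lemma.
\end{itemize}
The price is that it depends on uniform local boundedness. For unbounded families, pointwise convergence on \(D\) no longer implies locally uniform convergence; interpolating polynomials give counterexamples. The Lusin--Souslin argument still works there, since it needs only injectivity and a standard Borel domain.

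Two small points for the write-up of the Vitali step. First, the subsequential limit \(g\) supplied by Montel may lie outside \(\mathfrak H\), since \(\mathfrak H\) is not closed: for example, \(-1/(nz)\in\mathfrak H\) converges locally uniformly to \(0\notin\mathfrak H\). This is harmless, because you only compare against a limit \(h\in\mathfrak H\), and the identity theorem in \(\operatorname{Hol}(\C_+)\) forces \(g=h\). Second, metrizability of the \(D\)-pointwise topology on \(\mathfrak H\) follows from injectivity of \(\iota\) into the metrizable space \(\C^D\). This lets you pass from equality of sequential closures to equality of the two topologies.
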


\begin{proof}
The evaluation map
\[
 \mathcal E_D:\mathfrak H\longrightarrow\C^D,
 \qquad
 \mathcal E_D(h)=(h(z))_{z\in D},
\]
is Borel, and the identity theorem makes it injective. Since \(\mathfrak H\)
is a standard Borel space, the Lusin--Souslin theorem
\cite[Corollary~15.2, p.~89]{Kechris1995} shows that
\(\mathcal E_D(\mathfrak H)\) is Borel and that \(\mathcal E_D\) has a
measurable inverse on its image. Equality of the finite-dimensional laws on
\(D\) gives \((\mathcal E_D)_\#Q=(\mathcal E_D)_\#Q'\), and hence \(Q=Q'\).
\end{proof}

For a probability law \(Q\) on \(\mathfrak H\), let \(\{Y_j\}_{j\geq1}\)
be a Poisson point process with intensity \(\mathsf{L}_\tau\), and let
\(h_1,h_2,\ldots\) be independent with law \(Q\), independently of the point
process.  Define
\[
 \Phi_{\mathbf Y}(h_1,h_2,\ldots)(z)
 =-\left(z+\sum_{j\geq1}Y_jh_j(z)\right)^{-1}.
\]
For every compact \(K\subset\C_+\), with
\(\eta_K=\inf_{z\in K}\Imag z>0\),
\[
 \sum_{j\geq1}\sup_{z\in K}|Y_jh_j(z)|
 \leq\frac1{\eta_K}\sum_{j\geq1}Y_j<\infty.
\]
The series therefore converges absolutely and locally uniformly. Positivity
of the denominator's imaginary part gives
\(\Phi_{\mathbf Y}(h_1,h_2,\ldots)\in\mathfrak H\). Finite partial sums are
measurable as maps into \(\operatorname{Hol}(\C_+)\), so their locally
uniform limit is measurable. We write \(\mathcal F_\tau(Q)\) for the
resulting law on \(\mathfrak H\).

\begin{theorem}[Critical Poisson cavity recursion]\label{thm:poisson-rde}
Let
\[
 G_\tau(z)
 =\langle\delta_\varnothing,(A_\tau-z)^{-1}
          \delta_\varnothing\rangle,
 \qquad z\in\C_+,
\]
viewed as a random element of \(\mathfrak H\).  Its law \(Q_\tau\) is the
unique probability law on \(\mathfrak H\) satisfying
\[
 Q_\tau=\mathcal F_\tau(Q_\tau).
\]
Equivalently, if \(G_{\tau,j}\) are independent copies of \(G_\tau\),
independent of \(\{Y_j\}\), then
\begin{equation}\label{eq:poisson-rde}
 G_\tau(z)
 \stackrel{\mathrm d}{=}
 -\left(z+\sum_{j\geq1}Y_jG_{\tau,j}(z)\right)^{-1}.
\end{equation}
Moreover,
\[
 \int_\R\frac{\mu_\tau(\mathrm{d}x)}{x-z}
 =\E G_\tau(z).
\]
\end{theorem}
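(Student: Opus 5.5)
The plan has three parts: derive the recursion \eqref{eq:poisson-rde} from the tree structure via a Schur-complement identity, prove uniqueness by a contraction argument for \(\Imag z>1\) on a coupled recursive tree, and obtain the Stieltjes identity from Theorem~\ref{thm:poisson-critical} and Fubini.

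\emph{Fixed-point property.} For \(v\in\mathcal V\), let \(A_\tau^{(v)}\) be the operator built from the subtree rooted at \(v\), and put \(G_v(z)=\langle\delta_v,(A^{(v)}_\tau-z)^{-1}\delta_v\rangle\). By the Poisson construction, \(\{Y_{\varnothing,j}\}_j\) and the subtrees at the children \(j\) are mutually independent, and each subtree is distributed as the whole tree. Hence the \(G_j\) are i.i.d.\ copies of \(G_\tau\), independent of \(\{Y_{\varnothing,j}\}\).

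On the finite compressions \(A_{\tau,N}\), which are finite tree matrices, the Schur complement formula gives exactly
\[
 G^{N}_\varnothing(z)=-\Bigl(z+\sum_{j\le N}Y_{\varnothing,j}G^{N}_j(z)\Bigr)^{-1}.
\]
Here \(G^N_j\) is the root Green function of the compression of the subtree at \(j\).

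Next let \(N\to\infty\). The strong-resolvent convergence established before Theorem~\ref{thm:poisson-critical} applies verbatim to each subtree operator, because essential self-adjointness (Lemma~\ref{lem:poisson-esa}) holds almost surely at every vertex simultaneously. This gives \(G^N_\varnothing\to G_\varnothing\) and \(G^N_j\to G_j\) pointwise. The series converges by dominated convergence, with summable majorant \(Y_{\varnothing,j}/\Imag z\) since \(Q_\varnothing<\infty\). The denominator has imaginary part at least \(\Imag z\), so the limit passes through the inverse.

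This gives \eqref{eq:poisson-rde} pointwise almost surely, hence as random elements of \(\mathfrak H\), so \(Q_\tau=\mathcal F_\tau(Q_\tau)\). I expect this limiting step to be the main technical point: it needs care that the subtree compressions are the correct objects for infinite offspring, with finitely many children at level \(N\).

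\emph{Uniqueness.} Let \(Q\) be any fixed point. By Kolmogorov consistency, on the same Poisson environment \(\{Y_{v,j}\}\) build a family \((h_v)_{v\in\mathcal V}\) with the following properties:
\begin{itemize}
\item each \(h_v\) has law \(Q\);
\item \(h_v=\Phi_{\mathbf Y_v}(h_{v1},h_{v2},\ldots)\) for every \(v\);
\item \(\{h_{vj}\}_j\) is independent of \(\mathbf Y_v\).
\end{itemize}
To construct it, put i.i.d.\ \(Q\)-values at depth \(n\) and apply the recursion upward; the fixed-point property makes these depth-\(n\) constructions consistent.

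Both families \((G_v)\) and \((h_v)\) lie in \(\mathfrak H\). For \(\Imag z=\eta\), the identity \(a^{-1}-b^{-1}=(b-a)/(ab)\) and \(|a|,|b|\ge\eta\) give
\[
 |G_v(z)-h_v(z)|\le\eta^{-2}\sum_j Y_{v,j}\,|G_{vj}(z)-h_{vj}(z)|.
\]
Take expectations, using independence of \(\mathbf Y_v\) from the subtree quantities and Campbell's formula with \(\int y\,\mathsf L_\tau(\mathrm dy)=1\). This yields \(\E|G_\varnothing-h_\varnothing|\le\eta^{-2}\E|G_1-h_1|\). Iterating over \(n\) levels, and noting that depth-\(n\) quantities are identically distributed with \(\E|G-h|\le2/\eta\), gives
\[
 \E|G_\varnothing(z)-h_\varnothing(z)|\le\frac{2}{\eta}\,\eta^{-2n}\to0
 \qquad\text{for }\eta>1.
\]
Thus \(G_\varnothing(z)=h_\varnothing(z)\) almost surely for every \(z\) in a countable set \(D\subset\{\Imag z>1\}\) with an accumulation point. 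The finite-dimensional laws of \(Q\) and \(Q_\tau\) therefore agree on \(D\), and Lemma~\ref{lem:analytic-law-identification} gives \(Q=Q_\tau\).

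\emph{Stieltjes identity.} For each \(z\in\C_+\), the function \(x\mapsto(x-z)^{-1}\) is bounded. By Theorem~\ref{thm:poisson-critical} and Fubini,
\[
 \int_\R\frac{\mu_\tau(\mathrm dx)}{x-z}
 =\E\int_\R\frac{\rho_{\tau,o}(\mathrm dx)}{x-z}
 =\E G_\tau(z).
\]
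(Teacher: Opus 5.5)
Your proof is correct, but it takes a different route from the paper in both the existence and the uniqueness steps. The Stieltjes identity is handled the same way.

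\textbf{Existence.} The paper does not pass through the finite principal compressions. It deletes only the root edges to branches \(j>N\). This is a rank-two perturbation \(B_N\) with \(\|B_N\|\le 2(\sum_{j>N}Y_j)^{1/2}\to0\), so \(A_\tau-B_N\to A_\tau\) in the norm-resolvent sense. Every branch operator stays intact, so the branch Green functions in the Schur complement are already the true \(G_{\tau,j}\) and only one limit is needed. Your version uses only finite-matrix Schur complements. The price is a double limit over the root and all branches simultaneously. That needs strong-resolvent convergence on every subtree at once, which is fine: essential self-adjointness holds almost surely at all vertices, and \(\mathcal V_N\) intersected with a subtree still exhausts it. It also needs dominated convergence for the infinite sum.

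\textbf{Uniqueness.} The paper works purely at the level of laws. It takes two fixed points and couples the child messages by an optimal \(W_1\)-coupling of their joint evaluations at points with \(\Imag z_\ell>1\), using the same Poisson weights. This gives a strict contraction on evaluation laws, and Lemma~\ref{lem:analytic-law-identification} concludes. You instead build a recursive tree process from \(Q\) on the actual Poisson environment and compare it pathwise with the operator-defined \((G_v)\).

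This needs more setup than the paper's argument. You need the Kolmogorov extension. You also need independence of \(\mathbf Y_v\) from the pairs \((G_{vj},h_{vj})_j\) jointly, not just from \(\{h_{vj}\}_j\). This holds because, in each depth-\(n\) construction, these pairs are functions of variables not involving \(\mathbf Y_v\). Finally, you need \((G_v,h_v)\) to have the same law for every \(v\).

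In return, your argument proves more than the paper's. Any such process coincides almost surely with the true Green functions, which is an endogeny statement and not merely equality in law. That stationarity also makes your iteration over levels unnecessary: one step already gives \(\E|G_\varnothing-h_\varnothing|\le\eta^{-2}\E|G_\varnothing-h_\varnothing|\).
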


\begin{proof}
Removing the root leaves independent rooted subtrees. The children of each
branch root are governed by a fresh Poisson process with intensity
\(\mathsf L_\tau\), so each branch has the same law as the original tree.
To justify the Schur complement for countably many branches,
we truncate the connections at the root. Set
\[
 b_N=\sum_{j>N}\sqrt{Y_j}\,\delta_{o_j},
\]
where \(o_j\) denotes the root of the \(j\)-th branch, and let
\[
 B_N=|\delta_o\rangle\langle b_N|+|b_N\rangle\langle\delta_o|.
\]
Unlike the finite principal compressions used above, this truncation
removes only connections at the root; each descendant branch remains
unchanged. Set \(A_\tau^{(N)}=A_\tau-B_N\). The root is now coupled only to its first
\(N\) branches, while the others are decoupled from the root. The operator
\(A_\tau^{(N)}\) is self-adjoint because \(B_N\) is bounded and symmetric.
Moreover,
\[
 \operatorname{rank}B_N\leq2,
 \qquad
 \|B_N\|\leq2\|b_N\|
 =2\left(\sum_{j>N}Y_j\right)^{1/2}\longrightarrow0.
\]
Thus \(A_\tau^{(N)}\to A_\tau\) in the norm-resolvent sense. Passing to the
limit in the finite-branch Schur-complement identity shows that \(Q_\tau\)
is a fixed point of \(\mathcal F_\tau\), as expressed in
\eqref{eq:poisson-rde}. The finite principal compressions considered before
Theorem~\ref{thm:poisson-critical} establish the measurability of
\(z\mapsto G_\tau(z)\) as a random element of \(\mathfrak H\).

For uniqueness, let \(Q,Q'\) be two fixed points. The resolvent bound gives
bounded support for their finite-dimensional evaluation laws. The relevant
\(W_1\) distances are therefore finite, and optimal couplings exist. Fix
\(z_1,\ldots,z_r\in\C_+\) with \(\eta_\ell=\Imag z_\ell>1\), and let
\(\nu,\nu'\) be the joint evaluation laws at these points. Equip \(\C^r\)
with the cost
\[
 c(u,v)=\sum_{\ell=1}^r|u_\ell-v_\ell|.
\]
Take an optimal coupling of the joint evaluation vectors with laws
\(\nu,\nu'\), and use independent copies of this vector coupling for the
child messages. Use the same Poisson weights in both recursions, independently of
all message couplings. If \(D_\ell,D'_\ell\) are the resulting denominators,
then
\[
 |D_\ell|,|D'_\ell|\geq\eta_\ell
\]
and
\[
 \E\left|
 -\frac1{D_\ell}+\frac1{D'_\ell}
 \right|
 \leq
 \frac1{\eta_\ell^2}
 \E\sum_{j\geq1}Y_j
 |h_j(z_\ell)-h'_j(z_\ell)|
 =\frac1{\eta_\ell^2}
 \E|h(z_\ell)-h'(z_\ell)|,
\]
because \(\int y\,\mathsf{L}_\tau(\mathrm{d}y)=1\).  Hence the induced map
on joint evaluation laws is a strict Wasserstein contraction, with factor
at most \(\max_\ell\eta_\ell^{-2}<1\).  The finite-dimensional laws of
\(Q\) and \(Q'\) therefore agree throughout the half-plane \(\Imag z>1\).
Choosing a countable dense subset of that half-plane and applying
Lemma~\ref{lem:analytic-law-identification} gives \(Q=Q'\).

Finally, Theorem~\ref{thm:poisson-critical} and the resolvent bound justify
Fubini's theorem, which gives the Stieltjes-transform identity.
\end{proof}

\begin{corollary}[Structure of the critical moments]
\label{cor:critical-moments}
For every \(k\geq1\), \(M_{2k}(\tau)\) is a polynomial in \(\tau^{-1}\)
of degree \(k-1\) with nonnegative integer coefficients.  Its constant term
is \(\Cat_k\), and the coefficient of \(\tau^{1-k}\) is
\(((k-1)!)^2\).  In particular,
\[
 M_2(\tau)=1,
 \qquad
 M_4(\tau)=2+\tau^{-1},
 \qquad
 M_6(\tau)=5+6\tau^{-1}+4\tau^{-2}.
\]
Moreover,
\[
 \mu_\tau\Longrightarrow\mu_{\mathrm{sc}}
 \qquad(\tau\to\infty).
\]
\end{corollary}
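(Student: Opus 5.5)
The plan is to read everything off the moment formula of Theorem~\ref{thm:critical-family}, grouping the finite sum over \(\mathcal W_k\) by the number of traced edges \(\edg(\gamma)=\ell\in\{1,\dots,k\}\). Each class contributes \(\tau^{\ell-k}\prod_e((r_e-1)!)^2\). The exponent \(\ell-k\) ranges over \(\{1-k,\dots,0\}\), so \(M_{2k}(\tau)\) is a polynomial in \(\tau^{-1}\) of degree at most \(k-1\). Its coefficient of \(\tau^{-(k-\ell)}\) equals \(\sum_{\gamma:\edg(\gamma)=\ell}\prod_e((r_e-1)!)^2\), a nonnegative integer.

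For \(\ell=k\), every \(r_e=1\), so each weight is \(1\). By the Dyck-path bijection in the proof of Lemma~\ref{lem:high-degree-bound}, there are \(\Cat_k\) such classes, giving constant term \(\Cat_k\). For \(\ell=1\), there is a unique class: the single-edge walk with \(r_e=k\). Its weight is \(((k-1)!)^2\), so the degree is exactly \(k-1\).

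The small cases follow by direct enumeration. For \(M_2\), only the one-edge class exists, so \(M_2=1\). For \(M_4\), the Catalan part gives \(2\) and the one-edge class gives \(\tau^{-1}\); this agrees with Proposition~\ref{prop:fourth-moment-scale}. For \(M_6\), the constant term is \(\Cat_3=5\) and the \(\tau^{-2}\) coefficient is \((2!)^2=4\). The \(\tau^{-1}\) coefficient counts classes of length \(6\) with two edges, multiplicities \(\{2,1\}\), and weight \(1\).

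This count is the only real enumeration step. The two-edge tree is either a path \(o\!-\!u\!-\!w\) or a cherry at \(o\), and the doubled edge is either one. For the path, doubling the top edge gives \(ouououwuo\)-type words. Concretely, I would list the length-\(6\) closed walks with three up-steps on two edges. Path tree with edge \(ou\) doubled: \(o u o u w u o\) and \(o u w u o u o\), which is \(2\). Path tree with edge \(uw\) doubled: \(o u w u w u o\), which is \(1\). Cherry with first child doubled: \(o u o u o w o\) and \(o u o w o u o\), which is \(2\). Cherry with second child doubled: \(o u o w o w o\), which is \(1\). Canonical labelling by first discovery makes these distinct, for a total of \(6\). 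I expect this enumeration to be the main place where care is needed, mainly to avoid double counting under the first-visit convention.

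For \(\tau\to\infty\), each \(M_{2k}(\tau)\to\Cat_k\) because every non-constant term carries a negative power of \(\tau\). Odd moments vanish. The family \(\mu_\tau\) is tight since its second moment is \(1\). Boundedness of \(M_{2k+2}(\tau)\) for \(\tau\ge1\) gives uniform integrability of \(x^{2k}\). So any subsequential limit has the Catalan moments, and the semicircle law is moment-determinate. This identifies the limit as \(\mu_{\mathrm{sc}}\), exactly as in the proof of Theorem~\ref{thm:semicircle}.
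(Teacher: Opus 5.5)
Your proposal is correct and follows essentially the same route as the paper. You group the critical moment formula by $\edg(\gamma)$, identify the Catalan sector and the unique one-edge sector, and obtain $\mu_\tau\Rightarrow\mu_{\mathrm{sc}}$ from tightness, uniform integrability, and moment determinacy of the semicircle law. Your explicit enumeration of the six two-edge classes for $M_6$ supplies the ``direct classification'' that the paper leaves to the reader. You did not state why $M_{2k+2}(\tau)$ is bounded for $\tau\geq1$, but it follows at once because a polynomial in $\tau^{-1}$ with nonnegative coefficients is nonincreasing in $\tau$; the paper instead cites the uniform Carleman estimate.
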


\begin{proof}
The power of \(\tau^{-1}\) associated with a class \(\gamma\) is
\(k-\edg(\gamma)\).  The sector \(\edg(\gamma)=k\) consists of the \(\Cat_k\)
Dyck-path classes and contributes the constant term.  The sector
\(\edg(\gamma)=1\) consists of the unique alternating walk on one edge and
contributes \(((k-1)!)^2\tau^{1-k}\).  Direct classification for
\(k=1,2,3\) gives the displayed moments.

To prove the endpoint convergence, let \(\tau_n\to\infty\).  The second
moments of \(\mu_{\tau_n}\) are one, so the family is tight.  For every
fixed \(p\), the Carleman estimate above is uniform for \(\tau_n\geq1\) and
gives
\[
 \sup_n M_{2p+2}(\tau_n)<\infty.
\]
Thus powers of order \(2p\) are uniformly integrable along weakly
convergent subsequences.  Every term with \(\edg(\gamma)<k\) vanishes as
\(\tau_n\to\infty\), whereas the \(\edg(\gamma)=k\) sector contributes
\(\Cat_k\).  Every subsequential limit therefore has the semicircle
moments, and moment determinacy of the semicircle law yields
\(\mu_{\tau_n}\Rightarrow\mu_{\mathrm{sc}}\).
\end{proof}

\subsection{Subcritical collapse and the escaped spectral energy}

When \(d\alpha_d^2\to0\), the law on the \(\sqrt d\) scale converges
weakly to \(\delta_0\), yet its second moment remains one. A vanishing
fraction of the spectral mass, at increasingly large rescaled values,
carries this moment. We first prove this collapse and then identify the
escaping energy by weighting spectral mass with the square of its location.
The following gamma-ratio asymptotics will be used in both steps.
For real \(a>0\) and \(q>0\), write
\[
 (a)_q=\frac{\Gamma(a+q)}{\Gamma(a)}.
\]
This is the real-order extension of the rising factorial introduced above
and agrees with it when \(q\) is a positive integer.

\begin{lemma}[Gamma-ratio asymptotics]\label{lem:gamma-ratio}
For every fixed \(q>0\),
\[
 (a)_q\sim\Gamma(q)a
 \quad(a\downarrow0),
 \qquad
 (a)_q\sim a^q
 \quad(a\to\infty).
\]
\end{lemma}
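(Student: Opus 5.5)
We rewrite the real-order rising factorial as a ratio of Gamma functions and treat each regime with an elementary Gamma-function fact. The constant \(q>0\) stays fixed throughout, and only \(a\) varies.

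\emph{Small \(a\).} We use the functional equation \(\Gamma(a+1)=a\Gamma(a)\) to write
\[
 (a)_q=\frac{\Gamma(a+q)}{\Gamma(a)}=\frac{a\,\Gamma(a+q)}{\Gamma(a+1)}.
\]
As \(a\downarrow0\), continuity of \(\Gamma\) on \((0,\infty)\) gives \(\Gamma(a+q)\to\Gamma(q)\), since \(q>0\), and \(\Gamma(a+1)\to\Gamma(1)=1\). Hence \((a)_q/a\to\Gamma(q)\), which is the first asymptotic. This is consistent with the integer case \((\alpha)_r\sim\alpha(r-1)!\) used in Theorem~\ref{thm:disorder-limits}.

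\emph{Large \(a\).} We invoke the classical limit
\[
 \lim_{a\to\infty}\frac{\Gamma(a+q)}{\Gamma(a)\,a^q}=1,
\]
which follows from Stirling's formula. Writing \(\log\Gamma(x)=(x-\tfrac12)\log x-x+\tfrac12\log(2\pi)+o(1)\) and substituting \(x=a+q\) and \(x=a\), the difference is
\[
 (a+q-\tfrac12)\log(a+q)-(a-\tfrac12)\log a-q+o(1).
\]
Using \(\log(a+q)=\log a+q/a+O(a^{-2})\), this equals \(q\log a+o(1)\), so the ratio tends to one.

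Alternatively, to avoid Stirling, we would use Wendel's inequality. For \(0<s<1\) it gives \(x^{1-s}(x+s)^{-(1-s)}\le\Gamma(x+s)/(x^s\Gamma(x))\le1\). We write \(q=n+s\) with \(n\in\{0,1,2,\ldots\}\) and \(s\in[0,1)\), and factor
\[
 (a)_q=(a)_n\,(a+n)_s,
 \qquad (a)_n=a(a+1)\cdots(a+n-1).
\]
Then \((a)_n\sim a^n\) as \(a\to\infty\), and Wendel gives \((a+n)_s\sim(a+n)^s\sim a^s\).

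The only step needing care is the large-\(a\) regime with non-integer \(q\). No real obstacle is expected, since both routes above are textbook facts and we would cite one of them. Everything else reduces to continuity of \(\Gamma\) and the functional equation.
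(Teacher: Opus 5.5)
Your proof is correct and follows the paper's route: the small-\(a\) case is exactly \(\Gamma(a+q)\to\Gamma(q)\) together with \(a\Gamma(a)=\Gamma(a+1)\to1\), and the large-\(a\) case is the standard ratio asymptotic \(\Gamma(a+q)/\Gamma(a)\sim a^q\). The paper simply cites that asymptotic, while you also sketch it via Stirling's formula or Wendel's inequality, and both sketches are sound.
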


\begin{proof}
The first relation follows from
\(\Gamma(a+q)\to\Gamma(q)\) and
\(a\Gamma(a)\to1\).  The second is the standard ratio asymptotic
\(\Gamma(a+q)/\Gamma(a)\sim a^q\).
\end{proof}

\begin{theorem}[Subcritical collapse and escape of the second moment]
\label{thm:subcritical-collapse}
Let \(d\to\infty\), let \(\alpha_d>0\), and suppose that
\[
 \tau_d:=d\alpha_d^2\longrightarrow0.
\]
Then
\[
 (\sqrt d\,\cdot)_\#\mu_{d,\alpha_d}
 \Longrightarrow
 \delta_0.
\]
More strongly, for every \(0<p<2\),
\[
 \int_{\R}|x|^p\,
 (\sqrt d\,\cdot)_\#\mu_{d,\alpha_d}(\dd x)
 \longrightarrow0,
\]
whereas
\[
 \int_{\R}x^2\,
 (\sqrt d\,\cdot)_\#\mu_{d,\alpha_d}(\dd x)=1
\]
for every \(d\). Thus \(x^2\) is not uniformly integrable with respect to the
rescaled measures.
\end{theorem}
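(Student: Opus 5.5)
The plan is to bound, for \(0<p<2\), the \(p\)-th absolute moment of each root spectral measure by its second moment, which is an explicit local quantity. Then average using the Dirichlet fractional moments from the remark on fractional-moment structure, and finally control the gamma ratios with Gautschi's inequality. The identity for the second moment is immediate from \(m_2(d,\alpha)=1/d\) in Theorem~\ref{thm:moment-formula}. Weak convergence to \(\delta_0\) will follow from the \(p\)-moment statement by Markov's inequality; for example, \(p=1\) suffices. Non-uniform integrability follows by contradiction: if \(x^2\) were uniformly integrable, the limit \(\delta_0\) would have second moment one.

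\textbf{Step 1 (reduction to an edge sum).} Set \(s=p/2\in(0,1)\). Let \(\rho_o\) be the root spectral measure of \(T_\infty\). By Jensen's inequality for the concave map \(t\mapsto t^{s}\),
\[
 \int|x|^p\,\rho_o(\dd x)\le\Bigl(\int x^2\,\rho_o(\dd x)\Bigr)^{s}
 =\|T_\infty\delta_o\|^{2s}
 =\Bigl(\sum_{u\sim o}X_{o,u}X_{u,o}\Bigr)^{s}.
\]
Subadditivity of \(t\mapsto t^s\) then bounds this by \(\sum_{u\sim o}(X_{o,u}X_{u,o})^{s}\). The marks at \(o\) and \(u\) are independent, so taking expectations gives
\[
 \int|x|^p\,(\sqrt d\,\cdot)_\#\mu_{d,\alpha}(\dd x)
 \le d^{s}\cdot d\left(\frac{(\alpha)_s}{(d\alpha)_s}\right)^2.
\]

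\textbf{Step 2 (gamma-ratio bounds).} This is the only delicate point, because \(d\alpha_d=\sqrt{d\tau_d}\) may tend to \(0\), tend to \(\infty\), or oscillate. Lemma~\ref{lem:gamma-ratio} alone is therefore not enough, and I would use uniform two-sided bounds instead.

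For the numerator: since \(\alpha_d\to0\), we may assume \(\alpha\le1\). Then \((\alpha)_s=\alpha\,\Gamma(\alpha+s)/\Gamma(\alpha+1)\le C_s\alpha\), because \(\Gamma(\alpha+s)\) is bounded for \(\alpha\in(0,1]\) and \(\Gamma(\alpha+1)\) is bounded below there.

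For the denominator: write \((a)_s=a\,\Gamma(a+s)/\Gamma(a+1)\). Gautschi's inequality \(\Gamma(a+1)/\Gamma(a+s)\le(a+1)^{1-s}\) gives \((a)_s\ge a(a+1)^{s-1}\) for all \(a>0\).

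Substituting both bounds yields
\[
 d^{1+s}\left(\frac{(\alpha)_s}{(d\alpha)_s}\right)^2
 \le C_s^2\, d^{1+s}\alpha^2\,\frac{(d\alpha+1)^{2-2s}}{(d\alpha)^2}
 = C_s^2\, d^{s-1}(d\alpha+1)^{2-2s}.
\]

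\textbf{Step 3 (case split).} If \(d\alpha_d\le1\), the bound is at most \(C\,d^{s-1}\), which tends to \(0\). If \(d\alpha_d\ge1\), then \((d\alpha+1)^{2-2s}\le 4(d\alpha)^{2-2s}\). Using \(d\alpha=\sqrt{d\tau_d}\), the bound becomes \(4C\,d^{s-1}(d\tau_d)^{1-s}=4C\,\tau_d^{1-s}\), which also tends to \(0\). Both cases therefore give convergence of the \(p\)-th moment to zero along the full sequence.

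The conclusions stated at the start then complete the proof: Markov's inequality gives weak convergence to \(\delta_0\), and the failure of uniform integrability of \(x^2\) follows by contradiction with the fixed second moment \(1\).
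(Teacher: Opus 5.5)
Your proof is correct and follows the paper's argument. Jensen's inequality bounds the $p$-th moment of $\rho_o$ by $\|T_\infty\delta_o\|^{p}$; subadditivity and independence of the marks then give $d^{1+s}\bigl((\alpha_d)_s/(d\alpha_d)_s\bigr)^2$; and Markov's inequality with $m_2(d,\alpha)=1/d$ finishes.

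The only difference is the final gamma-ratio estimate. The paper applies Lemma~\ref{lem:gamma-ratio} after passing to subsequences along which $d\alpha_d\to\theta\in[0,\infty]$. In the case $\theta=\infty$ this also gives the sharp constant $\Gamma(q)^2\tau_d^{1-q}$, which is reused in Corollary~\ref{cor:critical-zero-endpoint}. Your Gautschi bound instead gives the explicit uniform rate $O(d^{s-1}+\tau_d^{1-s})$ without any subsequence extraction.
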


\begin{proof}
Put \(\theta_d=d\alpha_d\), and fix \(q\in(0,1)\).  At the root of the marked tree,
\[
 S_o:=\langle\delta_o,T_\infty^2\delta_o\rangle
 =\sum_{u\sim o}X_{o,u}X_{u,o}.
\]
Let \(\rho_o^{(2)}\) be the spectral measure of \(T_\infty^2\) at
\(\delta_o\).  By functional calculus,
\[
 \langle\delta_o,(T_\infty^2)^q\delta_o\rangle
 =\int_{[0,1]}x^q\,\rho_o^{(2)}(\dd x),
 \qquad
 S_o=\int_{[0,1]}x\,\rho_o^{(2)}(\dd x).
\]
Jensen's inequality, applied to the probability measure \(\rho_o^{(2)}\)
and the concave function \(x\mapsto x^q\), gives
\[
 \langle\delta_o,(T_\infty^2)^q\delta_o\rangle
 \leq S_o^q.
\]
Since \(q<1\), subadditivity and independence of marks at distinct vertices
yield
\begin{align*}
 \E S_o^q
 &\leq
 \sum_{u\sim o}\E[X_{o,u}^qX_{u,o}^q]\\
 &=d\left(\frac{(\alpha_d)_q}{(\theta_d)_q}\right)^2.
\end{align*}
Consequently,
\[
 \int |x|^{2q}\,
 (\sqrt d\,\cdot)_\#\mu_{d,\alpha_d}(\dd x)
 \leq
 d^{q+1}
 \left(\frac{(\alpha_d)_q}{(\theta_d)_q}\right)^2.
\]
We show that the right-hand side tends to zero by considering subsequences.
Every subsequence has a further subsequence on which
\(\theta_d\to\theta\in[0,\infty]\). Since \(\tau_d\to0\), we also have
\(\alpha_d\to0\). Lemma~\ref{lem:gamma-ratio} gives three cases. If
\(\theta=0\), then
\((\alpha_d)_q/(\theta_d)_q\sim\alpha_d/\theta_d=1/d\), so the bound is
asymptotic to \(d^{q-1}\). If \(\theta\in(0,\infty)\), the bound is
\(O(d^{q+1}\alpha_d^2)=O(\theta_d^2d^{q-1})\). If \(\theta=\infty\), then
\[
 d^{q+1}
 \left(\frac{(\alpha_d)_q}{(\theta_d)_q}\right)^2
 \sim
 \Gamma(q)^2\tau_d^{1-q}.
\]
All three bounds tend to zero. Because every subsequence has a further
subsequence with this property, the original bound also tends to zero.
Taking \(p=2q\) gives vanishing fractional moments of every positive order
below two. Markov's inequality then gives weak convergence to \(\delta_0\).
The second-moment identity follows from \(m_2(d,\alpha)=1/d\).
\end{proof}

Combining the preceding theorem with the semicircle and critical theorems
gives a complete classification on the variance scale. Write
\[
 \widehat\mu_{d,\alpha}=(\sqrt d\,\cdot)_\#\mu_{d,\alpha},
 \qquad \overline{\R}_+=[0,\infty],
\]
where \(\overline{\R}_+\) has the compact topology induced by
\(\tau\mapsto\tau/(1+\tau)\), with \(\infty\) mapped to \(1\), and put
\[
 \Phi(\tau)=
 \begin{cases}
  \delta_0,&\tau=0,\\
  \mu_\tau,&0<\tau<\infty,\\
  \mu_{\mathrm{sc}},&\tau=\infty.
 \end{cases}
\]

\begin{corollary}[Complete variance-scale classification]
\label{cor:sqrt-d-phase-diagram}
Let \(d\to\infty\), \(\alpha_d>0\), and \(\tau_d=d\alpha_d^2\).
The weak subsequential limits of \(\widehat\mu_{d,\alpha_d}\) are exactly the
laws \(\Phi(\tau)\) arising from subsequential limits
\(\tau\in\overline{\R}_+\) of \(\tau_d\). Consequently,
\(\widehat\mu_{d,\alpha_d}\) has a weak limit if and only if \(\tau_d\)
converges in \(\overline{\R}_+\). If \(\tau_d\to\tau\), then
\[
 W_1\bigl(\widehat\mu_{d,\alpha_d},\Phi(\tau)\bigr)\longrightarrow0.
\]
In particular,
\[
 \widehat\mu_{d,\alpha_d}\Longrightarrow\mu_{\mathrm{sc}}
 \quad\Longleftrightarrow\quad d\alpha_d^2\longrightarrow\infty.
\]
At \(\tau=0\), the rescaled laws retain second moment one, but this moment
escapes the weak limit.
\end{corollary}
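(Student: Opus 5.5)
The plan is to assemble the three regime theorems through a subsequence argument, with continuity of \(\Phi\) as the glue.

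\emph{Step 1: subsequences of \(\tau_d\).} Given any subsequence along which \(\tau_d\to\tau\in\overline{\R}_+\), I invoke the matching theorem:
\begin{itemize}
\item Theorem~\ref{thm:subcritical-collapse} if \(\tau=0\);
\item Theorem~\ref{thm:critical-family} if \(0<\tau<\infty\);
\item Theorem~\ref{thm:semicircle} if \(\tau=\infty\).
\end{itemize}
Each of these is stated for a full sequence \(d\to\infty\), but it applies verbatim along a subsequence. It gives \(\widehat\mu_{d,\alpha_d}\Rightarrow\Phi(\tau)\) along that subsequence.

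\emph{Step 2: every weak subsequential limit is some \(\Phi(\tau)\).} Suppose \(\widehat\mu_{d,\alpha_d}\Rightarrow\nu\) along some subsequence. Since \(\overline{\R}_+\) is compact, I pass to a further subsequence on which \(\tau_d\to\tau\). Step 1 then gives \(\nu=\Phi(\tau)\). Conversely, every \(\Phi(\tau)\) with \(\tau\) a subsequential limit of \(\tau_d\) arises, again by Step 1.

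\emph{Step 3: the "if and only if".} If \(\tau_d\) converges, Step 1 together with tightness (the second moment is one) and the sub-subsequence principle gives weak convergence of the whole sequence.

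For the converse, the key ingredient is injectivity of \(\Phi\) on \(\overline{\R}_+\). I would read it off the fourth moments along the \(0<\tau<\infty\) family, since \(M_4(\tau)=2+\tau^{-1}\) separates distinct finite \(\tau\) (Corollary~\ref{cor:critical-moments}). The endpoint laws are separated from the critical laws directly:
\begin{itemize}
\item \(\delta_0\) has second moment zero, while every \(\mu_\tau\) has second moment one;
\item \(\mu_{\mathrm{sc}}\) is compactly supported, while every \(\mu_\tau\) has unbounded support by Theorem~\ref{thm:critical-family}, and \(\mu_{\mathrm{sc}}\neq\delta_0\).
\end{itemize}
Now suppose \(\tau_d\) has two distinct subsequential limits. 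Step 1 produces two distinct weak limits, so \(\widehat\mu_{d,\alpha_d}\) does not converge. The final displayed equivalence is the special case \(\tau=\infty\).

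\emph{Step 4: \(W_1\) convergence.} By the fact quoted from Villani, weak convergence plus uniform integrability of \(|x|\) gives \(W_1\) convergence. Uniform integrability of \(|x|\) follows from the uniform bound \(\int x^2\,\dd\widehat\mu_{d,\alpha_d}=1\), via \(|x|\1_{|x|>R}\le x^2/R\). This holds in every regime, including \(\tau=0\), where \(x^2\) itself fails to be uniformly integrable but \(|x|\) does not. The closing sentence about escape of the second moment is exactly Theorem~\ref{thm:subcritical-collapse}.

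The main obstacle is the injectivity of \(\Phi\) in Step 3, in particular making sure no critical law coincides with an endpoint law. The separating facts listed there (second moment, support, fourth moments) handle this.
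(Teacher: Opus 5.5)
Your proposal is correct and follows essentially the same route as the paper. It identifies the subsequential limits through the three regime theorems and compactness of \(\overline{\R}_+\), obtains \(W_1\) convergence from the unit second moment, and derives the converse from pairwise distinctness of the laws \(\Phi(\tau)\). The one minor difference is that you separate \(\mu_{\mathrm{sc}}\) from the critical laws by compact versus unbounded support, whereas the paper compares fourth moments (\(2\) versus \(2+\tau^{-1}\)); both work, and the continuity of \(\Phi\) that your opening sentence invokes is never actually used or needed.
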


\begin{proof}
Along a sequence with \(\tau_d\to\tau\),
Theorems~\ref{thm:semicircle}, \ref{thm:critical-family}, and
\ref{thm:subcritical-collapse} give the weak limit \(\Phi(\tau)\).
The identity \(\int x^2\,\widehat\mu_{d,\alpha_d}(\dd x)=1\) implies
\[
 \int_{|x|>R}|x|\,\widehat\mu_{d,\alpha_d}(\dd x)\leq R^{-1},\qquad R>0,
\]
which gives uniform integrability of first moments and hence \(W_1\)
convergence. From any weakly convergent subsequence, compactness of
\(\overline{\R}_+\) yields a further subsequence on which \(\tau_d\)
converges. The same three theorems identify its limiting law. Conversely,
each parameter subsequential limit yields the corresponding law.

The laws \(\Phi(\tau)\) are pairwise distinct. At \(\tau=0\), the second
moment is zero; for every other law it is one. For \(0<\tau<\infty\), the
fourth moment is \(2+\tau^{-1}\), whereas the semicircle law has fourth moment
\(2\), by Corollary~\ref{cor:critical-moments}. Thus, if the laws converge,
all subsequential limits of the parameter must coincide. A sequence in the
compact space \(\overline{\R}_+\) with a unique subsequential limit converges.
This proves both the converse and the semicircle characterization.
\end{proof}

We next transfer the classification to the quantum walk. The comparison
concerns the normalized inherited component, with its two phase branches
centered at \(\pi/2\) and \(-\pi/2\), rather than the full unitary law.

\begin{corollary}[Quantitative inherited eigenphase transition]
\label{cor:eigenphase-diagram}
Let \(\vartheta(x)=\arcsin x\) for \(-1\leq x\leq1\). The inherited
pair is \(\phi_\pm(x)=\exp\{\pm\mathrm{i}(\pi/2-\vartheta(x))\}\).
Centering the two phases at \(\pi/2\) and \(-\pi/2\) gives the signed
coordinates \(-\vartheta(x)\) and \(\vartheta(x)\). By symmetry of
\(\mu_{d,\alpha}\), their normalized mixture is
\(\vartheta_\#\mu_{d,\alpha}\). Set
\[
 \widehat\eta_{d,\alpha}
 = (\sqrt d\,\vartheta)_\#\mu_{d,\alpha},
 \qquad C_{\mathrm{ph}}=\frac\pi2-1.
\]
For every \(d\geq3\) and \(\alpha>0\),
\begin{equation}\label{eq:eigenphase-W1-bound}
 W_1(\widehat\eta_{d,\alpha},\widehat\mu_{d,\alpha})
 \leq C_{\mathrm{ph}}\sqrt{m_4(d,\alpha)}
 \leq \frac{C_{\mathrm{ph}}}{\sqrt d}.
\end{equation}
For every sequence \(d\to\infty\) and \(\alpha_d>0\), the centered eigenphase
laws have a weak limit if and only if \(d\alpha_d^2\) converges in
\(\overline{\R}_+\). Their weak subsequential limits are those described in
Corollary~\ref{cor:sqrt-d-phase-diagram}. If \(d\alpha_d^2\to\tau\), then
\(\widehat\eta_{d,\alpha_d}\to\Phi(\tau)\) in \(W_1\). The comparison in
\eqref{eq:eigenphase-W1-bound} is \(O(d^{-1})\) whenever
\(\sup_d d^2m_4(d,\alpha_d)<\infty\). This includes the semicircular regime
and critical regimes with a finite positive parameter. The full unitary law
converges to \((\delta_1+\delta_{-1})/2\).
\end{corollary}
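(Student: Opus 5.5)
The argument is a single coupling followed by an appeal to Corollary~\ref{cor:sqrt-d-phase-diagram}. Couple \(\widehat\eta_{d,\alpha}\) and \(\widehat\mu_{d,\alpha}\) by drawing one \(X\sim\mu_{d,\alpha}\) and setting the pair \((\sqrt d\,\arcsin X,\sqrt d\,X)\). Then
\[
 W_1(\widehat\eta_{d,\alpha},\widehat\mu_{d,\alpha})
 \leq \sqrt d\,\E|\arcsin X-X|.
\]

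The key elementary step is a pointwise bound: for \(|x|\leq1\),
\[
 0\leq|\arcsin x|-|x|\leq C_{\mathrm{ph}}|x|^3,
 \qquad C_{\mathrm{ph}}=\tfrac\pi2-1 .
\]
To prove it, set \(g(x)=(\arcsin x-x)/x^3\) on \((0,1]\). The Taylor series \(\arcsin x-x=\sum_{n\geq1}c_nx^{2n+1}\) has nonnegative coefficients, so \(g(x)=\sum_n c_nx^{2n-2}\) is increasing. Its maximum is therefore \(g(1)=\pi/2-1\).

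With the pointwise bound in hand, Cauchy--Schwarz gives \(\E|X|^3\leq\sqrt{\E X^2\,\E X^4}=\sqrt{m_4/d}\), using \(m_2=1/d\) from Theorem~\ref{thm:moment-formula}. Hence
\[
 \sqrt d\,\E|\arcsin X-X|\leq C_{\mathrm{ph}}\sqrt{m_4(d,\alpha)},
\]
which is the first inequality in \eqref{eq:eigenphase-W1-bound}. For the second inequality, \(|X|\leq1\) gives \(m_4\leq m_2=1/d\).

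When \(\sup_d d^2m_4<\infty\), the bound \(\sqrt{m_4}\leq C/d\) gives the rate \(O(d^{-1})\). This hypothesis covers both named regimes. In the semicircular regime, Theorem~\ref{thm:semicircle} gives \(d^2m_4\to2\). In critical regimes with finite positive parameter, Proposition~\ref{prop:fourth-moment-scale} gives \(d^2m_4\to2+\tau^{-1}\).

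Next I transfer the classification. Since the \(W_1\) distance is at most \(C_{\mathrm{ph}}d^{-1/2}\to0\) uniformly in \(\alpha\), \(\widehat\eta_{d,\alpha_d}\) and \(\widehat\mu_{d,\alpha_d}\) have the same weak subsequential limits. Convergence in \(W_1\) implies weak convergence, and weak limits along subsequences coincide for two sequences whose bounded-Lipschitz distance tends to zero. Corollary~\ref{cor:sqrt-d-phase-diagram} then gives the if-and-only-if statement and identifies the limits \(\Phi(\tau)\). The \(W_1\) convergence \(\widehat\eta_{d,\alpha_d}\to\Phi(\tau)\) follows from the triangle inequality.

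The centering and symmetry identification stated in the corollary follow from \(\phi_\pm(x)=\exp\{\pm i(\pi/2-\arcsin x)\}\) together with the symmetry of \(\mu_{d,\alpha}\) from Theorem~\ref{thm:moment-formula}. The statement about the full unitary law is the total-variation bound \(2/d\) displayed at the start of Section~\ref{sec:high-degree}.

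The only step needing real care is the sharp constant in the pointwise arcsine estimate, and monotonicity of \(g\) settles it.
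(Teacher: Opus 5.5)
Your proposal is correct and follows essentially the same route as the paper. Both arguments couple through a single \(X\sim\mu_{d,\alpha}\), use the pointwise bound \(|\arcsin x-x|\le C_{\mathrm{ph}}|x|^3\) coming from the nonnegative Taylor coefficients summing to \(\pi/2-1\), and apply Cauchy--Schwarz with \(m_2=1/d\) and \(m_4\le m_2\); the classification then transfers via Corollary~\ref{cor:sqrt-d-phase-diagram}, the fourth-moment limits, and the total-variation bound. The only cosmetic difference is that you cite Proposition~\ref{prop:fourth-moment-scale} for the critical fourth-moment limit, where the paper cites the moment convergence in Theorem~\ref{thm:critical-family}; the two are equivalent.
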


\begin{proof}
The expansion
\[
 \arcsin t-t=\sum_{j\geq1}a_jt^{2j+1},
 \qquad a_j=\frac{\binom{2j}{j}}{4^j(2j+1)}>0,
 \qquad \sum_{j\geq1}a_j=\frac\pi2-1,
\]
valid on \([0,1]\), yields \(|\arcsin x-x|\leq C_{\mathrm{ph}}|x|^3\) for
\(|x|\leq1\). Couple the two laws using the same \(X\sim\mu_{d,\alpha}\).
We have \(\E X^2=m_2(d,\alpha)=1/d\). Since \(|X|\leq1\), we also have
\(m_4(d,\alpha)\leq m_2(d,\alpha)=1/d\). Cauchy--Schwarz gives
\(\E|X|^3\leq(\E X^2\,\E X^4)^{1/2}\), so
\[
 W_1(\widehat\eta_{d,\alpha},\widehat\mu_{d,\alpha})
 \leq C_{\mathrm{ph}}\sqrt d\,\E|X|^3
 \leq C_{\mathrm{ph}}\sqrt{m_4(d,\alpha)}
 \leq \frac{C_{\mathrm{ph}}}{\sqrt d}.
\]
The two families therefore have the same weak subsequential limits.
Corollary~\ref{cor:sqrt-d-phase-diagram} gives the classification and the
\(W_1\) limits. The fourth-moment convergence in
Theorems~\ref{thm:semicircle} and \ref{thm:critical-family} gives the
\(O(d^{-1})\) comparison. The claim for the full unitary law follows from
the total-variation estimate at the start of this section.
\end{proof}

We next describe the spectral energy that escapes the weak-limit scale. Set
\[
 \theta_d=d\alpha_d,
 \qquad
 s_d=1+\theta_d.
\]
Because \(d m_2(d,\alpha_d)=1\), the measure
\[
 \pi_d^{\mathrm{en}}
 :=
 (s_d\,\cdot)_\#
 \bigl[d\,x^2\mu_{d,\alpha_d}(\dd x)\bigr]
\]
is a probability measure. The factor \(d x^2\) weights spectral mass by its contribution to the
second moment; the pushforward under \(x\mapsto s_d x\) then changes
the observation scale. We call the resulting measure the
spectral-energy-biased law.

\begin{theorem}[Subcritical spectral-energy profile]
\label{thm:subcritical-energy}
Assume that
\[
 d\alpha_d^2\longrightarrow0,
 \qquad
 d\alpha_d\longrightarrow\theta\in\overline{\R}_+.
\]
Let \(\chi\) be a Rademacher variable independent of all variables below. For
\(0<\theta<\infty\), let \(B_\theta,B_\theta'\) be independent
\(\operatorname{Beta}(1,\theta)\) variables. Set \(B_0=B_0'=1\) almost surely,
consistent with the weak limit as \(\theta\downarrow0\). At the endpoint
\(\theta=\infty\), let \(E,E'\) be independent mean-one exponential variables.
Define
\[
 Z_\theta=
 \begin{cases}
  \chi(1+\theta)\sqrt{B_\theta B_\theta'},
       &0\leq\theta<\infty,\\
  \chi\sqrt{EE'},&\theta=\infty.
 \end{cases}
\]
Then
\[
 \pi_d^{\mathrm{en}}
 \Longrightarrow
 \pi_\theta:=\mathcal{L}(Z_\theta),
\]
and all moments converge. The odd moments vanish, and
\[
 \int x^{2k}\,\pi_\theta(\dd x)
 =
 \begin{cases}
  (1+\theta)^{2k}
  \left[\dfrac{k!}{(1+\theta)_k}\right]^2,
       &0\leq\theta<\infty,\\[1.2ex]
  (k!)^2,&\theta=\infty.
 \end{cases}
\]
In particular, \(\pi_0=\tfrac12(\delta_{-1}+\delta_1)\), while
\(\pi_\infty\) has unbounded support.
\end{theorem}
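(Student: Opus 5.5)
We use the method of moments on the grouped expansion of Proposition~\ref{prop:grouped-moments}. The \(2k\)-th moment of \(\pi_d^{\mathrm{en}}\) is
\[
 \int x^{2k}\,\pi_d^{\mathrm{en}}(\dd x)
 = d\,s_d^{2k}\,m_{2k+2}(d,\alpha_d)
 = d\,s_d^{2k}\sum_{\gamma\in\mathcal W_{k+1}}N_d(\gamma)
 \frac{\prod_{e}(\alpha_d)_{r_e}^2}{\prod_v(\theta_d)_{R_v}}.
\]
Odd moments vanish by symmetry of \(\mu_{d,\alpha}\). The plan is to show that only the one-edge class survives, namely the walk making \(k+1\) round trips across a single root edge. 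Its contribution is
\[
 d\,s_d^{2k}\cdot d\cdot\left(\frac{(\alpha_d)_{k+1}}{(\theta_d)_{k+1}}\right)^2 .
\]
Since \(\alpha_d\to0\), we have \((\alpha_d)_{k+1}\sim\alpha_d\,k!\), and
\[
 (\theta_d)_{k+1}=\theta_d(1+\theta_d)_k .
\]
Hence this term equals
\[
 (1+o(1))\,s_d^{2k}\left(\frac{k!}{(1+\theta_d)_k}\right)^2,
\]
which tends to \((1+\theta)^{2k}[k!/(1+\theta)_k]^2\) for finite \(\theta\). For \(\theta=\infty\) it tends to \((k!)^2\), because \((1+\theta_d)_k\sim\theta_d^k\sim s_d^k\).

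The main step is a uniform bound on every class \(\gamma\) with \(\ell=\edg(\gamma)\geq2\). Use \(N_d(\gamma)\leq d^\ell\) and \((\alpha_d)_r\leq C_k\alpha_d\) for \(r\leq k+1\), valid since \(\alpha_d\leq1\) eventually. For the denominator use \((\theta)_R\geq\theta(1+\theta)^{R-1}\cdot c_R\); this holds with a constant \(c_R>0\) uniformly in \(\theta\geq0\), since \((\theta)_R=\theta\prod_{j=1}^{R-1}(\theta+j)\). The class then contributes at most a constant times
\[
 d^{1+\ell}s_d^{2k}\,\frac{\alpha_d^{2\ell}}{\theta_d^{\ell+1}\,s_d^{2k+2-(\ell+1)}}
 = d^{1+\ell}\alpha_d^{2\ell}\theta_d^{-\ell-1}s_d^{\ell-1}.
\]
Here we used \(\sum_v R_v=2k+2\) and \(|V(H_\gamma)|=\ell+1\). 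Substituting \(\theta_d=d\alpha_d\) gives
\[
 (d\alpha_d^2)^{\ell-1}\left(\frac{s_d}{\theta_d}\right)^{\ell-1}
 =\left(\tau_d+\alpha_d\right)^{\ell-1}.
\]
This tends to zero because \(\tau_d\to0\) and \(\alpha_d\to0\). As a check at \(\ell=1\), the same expression gives \(O(1)\), matching the exact computation. Since \(\mathcal W_{k+1}\) is finite, all non-single-edge classes vanish, uniformly over \(\theta\in\overline{\R}_+\). This removes the need to split into the cases \(\theta\) small or large. The one place to check carefully is the factor \(\theta_d^{-1}\) coming from each vertex when \(\theta_d\to0\); the power count above absorbs it exactly.

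Next we identify the limit laws. For \(B\sim\operatorname{Beta}(1,\theta)\),
\[
 \E B^k=\frac{k!}{(1+\theta)_k},
\]
so \(Z_\theta\) has the stated even moments. For \(\theta=0\) the moments equal \(1\), giving \(\pi_0=\tfrac12(\delta_{-1}+\delta_1)\). For \(\theta=\infty\), \(\E(EE')^k=(k!)^2\).

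For determinacy, the laws with \(\theta<\infty\) are compactly supported, on \([-(1+\theta),1+\theta]\). The law \(\pi_\infty\) has \(M_{2k}^{1/(2k)}=(k!)^{1/k}\sim k/e\), so Carleman's condition \(\sum k^{-1}=\infty\) holds. Moment convergence plus determinacy then gives weak convergence, by the tightness and uniform-integrability argument used in Theorem~\ref{thm:critical-family}. Unbounded support of \(\pi_\infty\) follows from \((k!)^{1/k}\to\infty\).
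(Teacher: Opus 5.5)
Your proposal is correct and follows the paper's proof essentially step by step. It isolates the one-edge class, bounds the remaining classes using $(\alpha_d)_r\le\alpha_d\,r!$ and $(\theta_d)_R\ge\theta_d s_d^{R-1}$ (your constant $c_R$ can simply be taken equal to $1$) to get the remainder $(\alpha_d+d\alpha_d^2)^{\ell-1}$, and then identifies the limit through the Beta and exponential moments, with Carleman's condition at $\theta=\infty$.
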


\begin{proof}
The total mass of \(\pi_d^{\mathrm{en}}\) is
\(d m_2(d,\alpha_d)=1\).  Since \(\mu_{d,\alpha_d}\) is symmetric,
\(\pi_d^{\mathrm{en}}\) is symmetric as well, so all odd moments vanish.
For \(k\geq0\),
\[
 \int x^{2k}\,\pi_d^{\mathrm{en}}(\dd x)
 =d s_d^{2k}m_{2k+2}(d,\alpha_d).
\]
Set \(K=k+1\) and use the grouped expansion from
Proposition~\ref{prop:grouped-moments}.  The unique class with one traced
edge contributes
\begin{align*}
 L_{d,k}
 &=d^2s_d^{2k}
   \frac{(\alpha_d)_{k+1}^2}{(\theta_d)_{k+1}^2}\\
 &=s_d^{2k}
   \left[\frac{(\alpha_d)_{k+1}/\alpha_d}
   {(1+\theta_d)_k}\right]^2.
\end{align*}
Since \(\alpha_d\to0\),
\[
 \frac{(\alpha_d)_{k+1}}{\alpha_d}\longrightarrow k!.
\]
If \(\theta<\infty\), this gives
\[
 L_{d,k}\longrightarrow
 (1+\theta)^{2k}
 \left[\frac{k!}{(1+\theta)_k}\right]^2.
\]
If \(\theta=\infty\), then
\(s_d^k/(1+\theta_d)_k\to1\), and the limit is \((k!)^2\).

We now bound the contribution of every class with at least two traced edges.
Fix \(K\), and take \(d\) large enough that \(d\geq2K\) and
\(\alpha_d\leq1\). For \(1\leq r\leq K\),
\[
 (\alpha_d)_r\leq\alpha_d r!.
\]
Moreover, for every positive integer \(R\),
\[
 (\theta_d)_R
 \geq
 \theta_d(1+\theta_d)^{R-1}
 =\theta_d s_d^{R-1}.
\]
A canonical class with \(e\geq2\) traced edges has \(e+1\) vertices. Using
\(N_d(\gamma)\leq d^e\) and \(\sum_vR_v=2K\), we bound its contribution to
\(d s_d^{2K-2}m_{2K}(d,\alpha_d)\) by
\begin{align*}
 C_K
 \frac{d^{e+1}s_d^{2K-2}\alpha_d^{2e}}
 {\theta_d^{e+1}s_d^{2K-e-1}}
 &=C_K[\alpha_d s_d]^{e-1}\\
 &=C_K(\alpha_d+d\alpha_d^2)^{e-1}.
\end{align*}
Here
\[
 \alpha_ds_d=\alpha_d+d\alpha_d^2\longrightarrow0.
\]
The bound therefore tends to zero. Since \(\mathcal W_K\) is finite,
summing over the classes proves convergence of every even moment.

For finite \(\theta\),
\[
 \E[B_\theta^k]=\frac{k!}{(1+\theta)_k},
\]
with the same formula under the convention \(B_0=1\). At the infinite
endpoint, \(\E[E^k]=k!\). The limits obtained above are therefore the moments
of \(Z_\theta\). Convergence of the second moments to one gives tightness.
For each fixed \(k\), convergence of the \(2k+2\)-th moments gives an
eventual uniform bound. Thus \(|x|^{2k}\) is uniformly integrable along
weakly convergent subsequences. For finite \(\theta\), the target has compact
support and is moment-determinate. At \(\theta=\infty\), the even moments
are \((k!)^2\), and
\[
 ((k!)^2)^{-1/(2k)}=(k!)^{-1/k}\sim\frac{\mathrm e}{k},
\]
so Carleman's condition holds. Every weak subsequential limit has the moments
of \(Z_\theta\). Moment determinacy then gives weak convergence, together
with convergence of all moments.
\end{proof}

\begin{corollary}[Classification of the escaped-energy limits]
\label{cor:energy-classification}
The map \(\theta\mapsto\pi_\theta\) from \(\overline{\R}_+\) into
probability measures with the \(W_1\) metric is continuous and injective.
Let \(d\to\infty\) and \(d\alpha_d^2\to0\), without requiring
\(\theta_d=d\alpha_d\) to converge. The weak subsequential limits of
\(\pi_d^{\mathrm{en}}\) are exactly the laws \(\pi_\theta\) arising from
subsequential limits \(\theta\in\overline{\R}_+\) of \(\theta_d\). Hence
\(\pi_d^{\mathrm{en}}\) has a weak limit if and only if \(\theta_d\)
converges in \(\overline{\R}_+\). In that case, convergence also holds in
\(W_1\).
\end{corollary}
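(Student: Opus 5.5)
The argument has four parts: continuity of \(\theta\mapsto\pi_\theta\) in \(W_1\), injectivity, identification of subsequential limits, and the consequences for convergence. Continuity and injectivity are handled through the explicit Beta-product representation. Subsequential limits come from Theorem~\ref{thm:subcritical-energy} together with compactness of \(\overline{\R}_+\).

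\emph{Continuity.} Since every \(\pi_\theta\) has second moment one, first absolute moments are uniformly integrable along any family. So \(W_1\) continuity reduces to weak continuity.

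For \(0\le\theta<\infty\), I would use a coupling \(B_\theta=U^{1/\theta}\) with \(U\) uniform (so \(\Pp(B_\theta>x)=(1-x)^\theta\)). Then \(\theta\mapsto(1+\theta)\sqrt{B_\theta B'_\theta}\) is almost surely continuous on \((0,\infty)\), and the convention \(B_0=1\) matches the limit \(U^{1/\theta}\to1\) as \(\theta\downarrow0\).

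At \(\theta=\infty\), note \((1+\theta)B_\theta=(1+\theta)(1-(1-U)^{1/\theta})\). Rewriting with \(1-U\) uniform gives \((1+\theta)(1-e^{-E/\theta})\to E\) almost surely, where \(E=-\log(1-U)\). Hence \(Z_\theta\to\chi\sqrt{EE'}\) almost surely.

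Alternatively, the explicit moment formula in Theorem~\ref{thm:subcritical-energy} is continuous in \(\theta\). Combined with uniform bounds on the fourth moment (\((1+\theta)^4[2/((1+\theta)(2+\theta))]^2\le4\)) and moment determinacy of each target, this gives weak continuity again.

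\emph{Injectivity.} I would compare fourth moments. The fourth moment is
\[
 f(\theta)=\frac{4(1+\theta)^2}{(1+\theta)^2(2+\theta)^2}\cdot(1+\theta)^2\Big/(1+\theta)^2 ,
\]
which, using \((1+\theta)_2=(1+\theta)(2+\theta)\), simplifies to \(4(1+\theta)^2/(2+\theta)^2\) for finite \(\theta\), and equals \(4\) at \(\infty\). The map \(\theta\mapsto(1+\theta)/(2+\theta)\) is strictly increasing on \([0,\infty)\) with limit \(1\). So \(f\) is strictly increasing from \(1\) to \(4\) on \(\overline{\R}_+\), which separates all the laws \(\pi_\theta\).

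\emph{Subsequential limits.} Tightness holds since all second moments are one. Take a weakly convergent subsequence of \(\pi_d^{\mathrm{en}}\). By compactness of \(\overline{\R}_+\), extract a further subsequence along which \(\theta_d\to\theta\). Theorem~\ref{thm:subcritical-energy}, applied along that subsequence (its proof only uses the subsequence), identifies the limit as \(\pi_\theta\), with \(\theta\) a subsequential limit of \(\theta_d\). Conversely, any subsequential limit \(\theta\) of \(\theta_d\) yields \(\pi_\theta\) as a weak subsequential limit by the same theorem.

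\emph{Convergence criterion.} If \(\pi_d^{\mathrm{en}}\) converges, all parameter subsequential limits give the same law. Injectivity then forces a unique subsequential limit of \(\theta_d\), so \(\theta_d\) converges by compactness. The converse is Theorem~\ref{thm:subcritical-energy}. Finally, \(W_1\) convergence follows from weak convergence plus the uniform integrability of \(|x|\) coming from the unit second moments.

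The main obstacle is continuity at the endpoint \(\theta=\infty\). The almost-sure coupling above handles it; if that proves awkward, the fallback is the moment route, which needs the uniform fourth-moment bound and Carleman determinacy of \(\pi_\infty\) (already established in Theorem~\ref{thm:subcritical-energy}).
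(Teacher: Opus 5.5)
Your proposal is correct in substance. Injectivity is exactly the paper's argument: the fourth moment \(4\bigl((1+\theta)/(2+\theta)\bigr)^2\) is strictly increasing, with value \(4\) at \(\infty\). Your first display of it is garbled, but the simplified value is right. The identification of subsequential limits, via compactness of \(\overline{\R}_+\) and Theorem~\ref{thm:subcritical-energy}, also matches the paper.

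The real difference is continuity. The paper goes through moments. Each even moment \((1+\theta)^{2k}[k!/(1+\theta)_k]^2\) is continuous on \(\overline{\R}_+\) and bounded by \((k!)^2\), since \((1+\theta)^k\le(1+\theta)_k\). Moment determinacy (compact support for finite \(\theta\), Carleman at \(\infty\)) then gives weak, hence \(W_1\), continuity. That route needs the uniform bound on every even moment, not only the fourth as in your fallback.

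Your coupling is more elementary and avoids determinacy altogether. Since every \(Z_\theta\) has second moment one, almost-sure convergence directly gives \(W_1(\pi_{\theta_j},\pi_\theta)\le\E|Z_{\theta_j}-Z_\theta|\to0\). You do need to fix the formula, though. \(U^{1/\theta}\) is \(\operatorname{Beta}(\theta,1)\), with \(\Pp(U^{1/\theta}>x)=1-x^\theta\), and it tends to \(0\), not \(1\), as \(\theta\downarrow0\). The correct choice is \(B_\theta=1-U^{1/\theta}\), or equivalently \(1-(1-U)^{1/\theta}\), which is what your \(\theta=\infty\) computation actually uses. With it, the limits at \(\theta\downarrow0\), at interior points, and at \(\theta\to\infty\) all go through.

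There is a second slip: \(\pi_d^{\mathrm{en}}\) has total mass one, not second moment one. Its second moment is
\[
 \int y^2\,\pi_d^{\mathrm{en}}(\dd y)=d(1+d\alpha_d)^2m_4(d,\alpha_d)=1+\Bigl(4-\frac2d\Bigr)\alpha_d+(2d-1)\alpha_d^2,
\]
which only tends to one. The paper computes exactly this from Theorem~\ref{thm:moment-formula}. That gives tightness and uniform integrability of first moments for the whole sequence, without assuming that \(\theta_d\) converges.

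Your argument survives because you only ever use boundedness. Tightness is not even needed to identify the subsequential limits. For the \(W_1\) claim, you can cite this formula. Alternatively, once a weak limit exists, \(\theta_d\) converges by your criterion, and Theorem~\ref{thm:subcritical-energy} then gives convergence of the second moments to one.
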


\begin{proof}
For each \(k\geq0\), the even-moment formula in
Theorem~\ref{thm:subcritical-energy} is continuous in
\(\theta\in\overline{\R}_+\) and bounded by \((k!)^2\), since
\((1+\theta)^k\leq(1+\theta)_k\) for finite \(\theta\geq0\),
with equality in the bound at \(\theta=\infty\); the odd moments
vanish. The target laws \(\pi_\theta\) all have second moment one, which gives
tightness and uniform integrability of first moments. Higher-moment bounds
allow moments to pass to weak subsequential limits. Moment determinacy from
the same theorem therefore gives \(W_1\) continuity. To prove injectivity,
observe that
\[
 \int x^4\,\pi_\theta(\dd x)
 =4\left(\frac{1+\theta}{2+\theta}\right)^2
 \quad(0\leq\theta<\infty),
 \qquad \int x^4\,\pi_\infty(\dd x)=4.
\]
The fourth moment is strictly increasing on \(\overline{\R}_+\). For the
approximating laws, Theorem~\ref{thm:moment-formula} gives the exact second moment
\begin{align*}
 \int y^2\,\pi_d^{\mathrm{en}}(\dd y)
 &=d(1+d\alpha_d)^2m_4(d,\alpha_d)\\
 &=1+\left(4-\frac2d\right)\alpha_d
       +(2d-1)\alpha_d^2\longrightarrow1,
\end{align*}
since \(d\alpha_d^2\to0\) implies \(\alpha_d\to0\). The approximating laws
are therefore tight, with uniformly integrable first moments. Pass to further
subsequences along which the parameter converges in the compact space
\(\overline{\R}_+\), and apply Theorem~\ref{thm:subcritical-energy}. This
identifies all weak subsequential limits. Injectivity proves the converse,
and uniform integrability gives \(W_1\) convergence whenever a weak limit
exists.
\end{proof}

The energy-biased law can also be converted back to a spectral intensity
by dividing out the quadratic weight. This measure is naturally considered
away from the origin, on \(\R^*=\R\setminus\{0\}\). Define
\[
 \Lambda_d
 =\frac{d}{s_d^2}(s_d\,\cdot)_\#\mu_{d,\alpha_d}.
\]
Then \(\pi_d^{\mathrm{en}}(\dd x)=x^2\Lambda_d(\dd x)\).

\begin{corollary}[Subcritical spectral-intensity limit]
\label{cor:subcritical-intensity}
Under the assumptions of Theorem~\ref{thm:subcritical-energy},
\[
 \Lambda_d
 \xrightarrow{\mathrm v}
 \Lambda_\theta,
 \qquad
 \Lambda_\theta(\dd x)=x^{-2}\pi_\theta(\dd x),
\]
vaguely on \(\R^*\).  The limit is locally finite on \(\R^*\) and satisfies
\[
 \int_{\R^*}x^2\,\Lambda_\theta(\dd x)=1.
\]
At the two endpoints,
\[
 \Lambda_0=\frac12(\delta_{-1}+\delta_1),
\]
whereas
\[
 \pi_\infty(\dd x)
 =2|x|K_0(2|x|)\,\dd x,
 \qquad
 \Lambda_\infty(\dd x)
 =\frac{2K_0(2|x|)}{|x|}\,\dd x,
 \quad x\neq0,
\]
where \(K_0\) is the modified Bessel function of the second kind.
\end{corollary}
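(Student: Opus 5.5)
The plan is to deduce vague convergence on \(\R^*\) directly from the weak convergence \(\pi_d^{\mathrm{en}}\Rightarrow\pi_\theta\) of Theorem~\ref{thm:subcritical-energy}, and then to compute the two endpoint densities explicitly.

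\emph{Vague convergence.} Take \(f\in C_c(\R^*)\). Then \(g(x)=f(x)/x^2\) is bounded and continuous on all of \(\R\) once extended by zero near the origin, because \(\operatorname{supp} f\) stays away from \(0\). Since \(\Lambda_d(\dd x)=x^{-2}\pi_d^{\mathrm{en}}(\dd x)\) on \(\R^*\), we get
\[
 \int f\,\dd\Lambda_d=\int g\,\dd\pi_d^{\mathrm{en}}\longrightarrow\int g\,\dd\pi_\theta=\int f\,\dd\Lambda_\theta .
\]
Local finiteness follows because \(\Lambda_\theta(\{|x|\ge\varepsilon\})\le\varepsilon^{-2}\). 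The identity \(\int x^2\dd\Lambda_\theta=1\) requires \(\pi_\theta(\{0\})=0\). This holds because \(Z_\theta\neq0\) almost surely: Beta and exponential variables are almost surely positive, and \(B_0=1\).

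\emph{Endpoint \(\theta=0\).} Here \(Z_0=\chi\) is Rademacher. Hence \(\pi_0=\frac12(\delta_{-1}+\delta_1)\), and dividing by \(x^2=1\) gives \(\Lambda_0=\pi_0\).

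\emph{Endpoint \(\theta=\infty\).} This is the only computation. We need the density of \(P=EE'\):
\[
 f_P(p)=\int_0^\infty e^{-t}e^{-p/t}\,\frac{\dd t}{t}=2K_0(2\sqrt p),
\]
using the standard integral representation \(\int_0^\infty t^{-1}e^{-t-p/t}\dd t=2K_0(2\sqrt p)\) (DLMF 10.32.10). Write \(|Z_\infty|=\sqrt P\), so \(p=x^2\) and \(\dd p=2x\,\dd x\). This gives \(|Z_\infty|\) the density \(4xK_0(2x)\) on \(x>0\). Symmetrizing with \(\chi\) yields \(\pi_\infty(\dd x)=2|x|K_0(2|x|)\dd x\), and hence \(\Lambda_\infty(\dd x)=2K_0(2|x|)|x|^{-1}\dd x\).

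As a cross-check, the moments should agree with Theorem~\ref{thm:subcritical-energy}. The substitution \(u=2x\) together with the Bessel moment formula in Proposition~\ref{prop:bessel-intensity} gives
\[
 \int x^{2k}\cdot 2|x|K_0(2|x|)\,\dd x=2^{-2k-1}\cdot2\cdot2^{2k}\Gamma(k+1)^2=(k!)^2,
\]
as required. The main (mild) obstacle is keeping the Jacobian and the symmetrization factors correct. I would settle these using the moment check.
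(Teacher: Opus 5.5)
Your proposal is correct and matches the paper's proof essentially step for step. You transfer the weak convergence of Theorem~\ref{thm:subcritical-energy} to vague convergence on \(\R^*\) via the bounded continuous test function \(f(x)/x^2\), you obtain the unit second moment from the absence of an atom at zero, and you get the \(\theta=\infty\) density from the representation \(\int_0^\infty t^{-1}e^{-t-p/t}\,\dd t=2K_0(2\sqrt p)\) followed by the change of variables to \(\chi\sqrt{EE'}\); your extra moment cross-check against \((k!)^2\) is also correct.
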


\begin{proof}
For \(f\in C_c(\R^*)\), the support of \(f\) has positive distance
from the origin.  Hence
\[
 g_f(x)=
 \begin{cases}
  f(x)/x^2,&x\neq0,\\
  0,&x=0
 \end{cases}
\]
is a bounded continuous function on \(\R\), and
\[
 \int f(x)\,\Lambda_d(\dd x)
 =\int g_f(x)\,\pi_d^{\mathrm{en}}(\dd x).
\]
Theorem~\ref{thm:subcritical-energy} gives the vague limit. The second-moment
identity follows because \(\pi_\theta\) has total mass one and no atom at
zero. Indeed, the Beta and exponential variables are strictly positive almost
surely, and \(\pi_0\) is supported on \(\{\pm1\}\). For \(\theta=\infty\),
the product \(Y=EE'\) has density
\[
 y\longmapsto2K_0(2\sqrt y),
 \qquad y>0,
\]
by the integral representation of \(K_0\)
\cite[Eq.~10.32.10]{NISTDLMF}.  Passing from \(Y\) to
\(\chi\sqrt Y\) gives the displayed densities.
\end{proof}

\begin{remark}[The Bessel intensity in the subcritical profile]
The intensity measure appearing in
Corollary~\ref{cor:subcritical-intensity} at \(\theta=\infty\),
\[
 \frac{2K_0(2|x|)}{|x|}\,\mathrm{d}x,
\]
is the symmetric square-root pushforward of \(\mathsf{L}_1\).
Thus the same Bessel measure describes the critical tree's squared edge
weights and, after this transformation, the escaped-energy intensity.
\end{remark}

The two families meet at different parameter endpoints: as the critical
parameter \(\tau\) decreases to zero, a further energy weighting and
rescaling recover the \(\theta=\infty\) member of the subcritical family.
The next result makes this connection precise.

\begin{corollary}[The lower endpoint of the critical family]
\label{cor:critical-zero-endpoint}
For every \(q\in(0,1)\),
\[
 \int_{\R}|x|^{2q}\,\mu_\tau(\dd x)
 \leq
 \Gamma(q)^2\tau^{1-q},
 \qquad \tau>0.
\]
Consequently,
\[
 \mu_\tau\Longrightarrow\delta_0
 \qquad(\tau\downarrow0).
\]
Moreover, if
\[
 \Pi_\tau
 :=(\sqrt\tau\,\cdot)_\#
   [x^2\mu_\tau(\dd x)],
\]
then
\[
 \Pi_\tau\Longrightarrow\pi_\infty
 \qquad(\tau\downarrow0),
\]
and equivalently
\[
 \tau^{-1}(\sqrt\tau\,\cdot)_\#\mu_\tau
 \xrightarrow{\mathrm v}
 \Lambda_\infty
\]
vaguely on \(\R^*\).
\end{corollary}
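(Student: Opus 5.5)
The plan is to work directly with the Poisson-tree representation of Theorem~\ref{thm:poisson-critical}, mirroring the Jensen/subadditivity argument of Theorem~\ref{thm:subcritical-collapse}. First I prove the fractional-moment bound. By Theorem~\ref{thm:poisson-critical}, $\int|x|^{2q}\mu_\tau(\dd x)=\E\int|x|^{2q}\rho_{\tau,o}(\dd x)$. Set $S=\|A_\tau\delta_\varnothing\|^2=Q_\varnothing=\sum_jY_{\varnothing,j}$, which is finite with $\E Q_\varnothing=1$. Jensen applied to the spectral measure of $A_\tau^2$ at the root, together with concavity of $x\mapsto x^q$, gives $\int|x|^{2q}\rho_{\tau,o}\leq S^q$. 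Subadditivity of $y\mapsto y^q$ then gives $S^q\leq\sum_jY_j^q$. Campbell's formula and Proposition~\ref{prop:bessel-intensity} with $r=q$ yield
\[
\E\sum_jY_j^q=\int y^q\,\mathsf L_\tau(\dd y)=\tau^{1-q}\Gamma(q)^2,
\]
which is exactly the claimed bound. Taking any fixed $q$ and letting $\tau\downarrow0$, Markov's inequality gives $\mu_\tau\Rightarrow\delta_0$.

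For the energy-biased limit I use moments. The measure $\Pi_\tau$ has total mass $M_2(\tau)=1$ and even moments
\[
\int x^{2k}\,\Pi_\tau(\dd x)=\tau^kM_{2k+2}(\tau).
\]
By Corollary~\ref{cor:critical-moments}, $M_{2k+2}(\tau)$ is a polynomial in $\tau^{-1}$ of degree $k$ whose top coefficient is $(k!)^2$, coming from the unique one-edge class. Hence $\tau^kM_{2k+2}(\tau)\to(k!)^2$ as $\tau\downarrow0$, since every other class contributes a positive power of $\tau$. These limits are the even moments of $\pi_\infty$ by Theorem~\ref{thm:subcritical-energy}, and the odd moments vanish by symmetry. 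Unit mass gives tightness, and the convergent $(2k+2)$-moments give uniform integrability of $x^{2k}$ along subsequences. The Carleman condition for $(k!)^2$, already checked in Theorem~\ref{thm:subcritical-energy}, then identifies every subsequential limit as $\pi_\infty$, so $\Pi_\tau\Rightarrow\pi_\infty$.

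The vague statement follows as in Corollary~\ref{cor:subcritical-intensity}. The identity $\tau^{-1}(\sqrt\tau\,\cdot)_\#\mu_\tau=x^{-2}\Pi_\tau(\dd x)$ holds on $\R^*$, because pushing $x^2\mu_\tau$ forward by $\sqrt\tau\,\cdot$ multiplies the weight by $\tau^{-1}y^2$ in the new variable $y=\sqrt\tau x$. Testing against $f\in C_c(\R^*)$ reduces the claim to the bounded continuous function $f(x)/x^2$ integrated against $\Pi_\tau$, and this converges because $\pi_\infty$ has no atom at zero. The only delicate point is this top-degree coefficient bookkeeping, which is immediate from the structure in Corollary~\ref{cor:critical-moments}; the Jensen step needs only that $\delta_\varnothing\in\operatorname{Dom}(A_\tau)$, which Theorem~\ref{thm:poisson-critical} provides.
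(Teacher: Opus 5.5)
Your proposal is correct. The energy-biased and vague-convergence parts follow the paper's argument essentially verbatim: the moments $\tau^kM_{2k+2}(\tau)\to(k!)^2$ come from the one-edge class, then uniform integrability, Carleman for $(k!)^2$, and testing against $f(x)/x^2$. One small correction there: tightness of $\Pi_\tau$ does not come from unit mass. It comes from the bounded second moment $\int x^2\,\Pi_\tau(\dd x)=\tau M_4(\tau)=1+2\tau$, which your moment convergence already supplies.

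The genuine difference is in the fractional-moment bound. The paper does not work on the Poisson tree for this step. It takes a sequence with $d\alpha_d^2\to\tau$ and notes that the finite-degree bound $d^{q+1}\bigl((\alpha_d)_q/(\theta_d)_q\bigr)^2$ from the proof of Theorem~\ref{thm:subcritical-collapse} tends to $\Gamma(q)^2\tau^{1-q}$, since $\alpha_d\to0$ and $\theta_d\to\infty$. It then transfers this bound to $\mu_\tau$ via Theorem~\ref{thm:critical-family} and the Portmanteau inequality for the nonnegative function $|x|^{2q}$.

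You instead run the same Jensen--subadditivity argument directly on the limiting operator. Using $\|A_\tau\delta_\varnothing\|^2=Q_\varnothing$ and Campbell's formula, the bound becomes the exact identity $\E\sum_jY_j^q=\int y^q\,\mathsf L_\tau(\dd y)=\Gamma(q)^2\tau^{1-q}$. Your route works entirely with the limit object, needs no limiting argument, and explains the constant as the $q$-th moment of the Bessel intensity. It does rely on the operator-theoretic input of Theorem~\ref{thm:poisson-critical}: essential self-adjointness and $\delta_\varnothing\in\operatorname{Dom}(A_\tau)$.

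The paper's route needs only the moment convergence of Theorem~\ref{thm:critical-family}. It also shows that the finite-degree estimate converges to exactly this bound. That the two computations give the same constant is a useful consistency check between the finite-degree and Poisson pictures.
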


\begin{proof}
For fixed \(\tau>0\), take any sequence with
\(d\alpha_d^2\to\tau\).  Then \(d\alpha_d\to\infty\), and the fractional
moment estimate in the proof of Theorem~\ref{thm:subcritical-collapse}
gives
\[
 \limsup_{d\to\infty}
 \int |x|^{2q}\,
 (\sqrt d\,\cdot)_\#\mu_{d,\alpha_d}(\dd x)
 \leq\Gamma(q)^2\tau^{1-q}.
\]
Theorem~\ref{thm:critical-family} and the Portmanteau theorem, applied to
the nonnegative lower semicontinuous function \(|x|^{2q}\), yield the same
bound for \(\mu_\tau\).  For every \(\varepsilon>0\), Markov's inequality gives
\[
 \mu_\tau(|x|>\varepsilon)
 \leq\varepsilon^{-2q}\Gamma(q)^2\tau^{1-q}
 \longrightarrow0,
\]
which proves weak convergence to \(\delta_0\).

For the energy-biased law, note first that \(\Pi_\tau\) is a probability
measure because \(M_2(\tau)=1\).  Moreover,
\[
 \int x^{2k}\,\Pi_\tau(\dd x)
 =\tau^kM_{2k+2}(\tau).
\]
In the closed-walk formula for \(M_{2k+2}(\tau)\), the unique one-edge class
contributes \((k!)^2\tau^{-k}\). After multiplication by \(\tau^k\), every
class with at least two edges vanishes in the limit. The moments therefore
converge to \((k!)^2\), those of \(\pi_\infty\). Moreover,
\[
 \int x^2\,\Pi_\tau(\dd x)
 =\tau M_4(\tau)=1+2\tau,
\]
so the family is tight as \(\tau\downarrow0\).  For each fixed \(k\),
the next even moment has the explicit nonnegative-coefficient expansion
\[
 \tau^{k+1}M_{2k+4}(\tau)
 =\sum_{\gamma\in\mathcal W_{k+2}}
 c_\gamma\,\tau^{\edg(\gamma)-1},
 \qquad
 c_\gamma=\prod_{e\in E(H_\gamma)}
 \bigl((r_e(\gamma)-1)!\bigr)^2\geq0,
\]
and is bounded on \((0,1]\). This bound gives the uniform integrability
needed to pass moments to weak subsequential limits. Carleman's condition
for \(\pi_\infty\) then implies \(\Pi_\tau\Rightarrow\pi_\infty\).

Finally, for \(f\in C_c(\R^*)\), apply this weak convergence to the bounded
continuous extension of \(f(x)/x^2\).  The identity
\[
 \Pi_\tau(\dd x)
 =x^2\tau^{-1}(\sqrt\tau\,\cdot)_\#\mu_\tau(\dd x)
\]
then gives the asserted vague convergence.
\end{proof}

\subsection{Uniform approximation across the phase diagram}
\label{subsec:uniform-approximation}

The three regimes combine into a single approximation, uniform over all
positive Dirichlet parameters. The approximating law is indexed by the
current value of \(d\alpha^2\), so no convergence of this parameter needs
to be assumed.

\begin{proposition}[Uniform Wasserstein approximation]
\label{prop:uniform-phase-approximation}
The map \(\Phi:\overline{\R}_+\to\mathcal P_1(\R)\) is continuous and
injective, where \(\mathcal P_1(\R)\) denotes the probability measures with
finite first moment, equipped with \(W_1\). Moreover,
\begin{align}
 \lim_{d\to\infty}\sup_{\alpha>0}
 W_1\bigl(\widehat\mu_{d,\alpha},\Phi(d\alpha^2)\bigr)&=0,
 \label{eq:uniform-discriminant-approximation}\\
 \lim_{d\to\infty}\sup_{\alpha>0}
 W_1\bigl(\widehat\eta_{d,\alpha},\Phi(d\alpha^2)\bigr)&=0.
 \label{eq:uniform-eigenphase-approximation}
\end{align}
\end{proposition}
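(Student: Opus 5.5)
The plan has three parts: continuity and injectivity of \(\Phi\), then the uniform discriminant bound by a compactness-and-contradiction argument, then transfer to the eigenphases.

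\emph{Continuity and injectivity of \(\Phi\).} Injectivity was already noted in the proof of Corollary~\ref{cor:sqrt-d-phase-diagram}. The second moment separates \(\tau=0\) from the rest. The fourth moment \(2+\tau^{-1}\) (Corollary~\ref{cor:critical-moments}) is strictly decreasing in \(\tau\in(0,\infty)\) and differs from the semicircle value \(2\).

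For continuity, every \(\Phi(\tau)\) with \(\tau>0\) has second moment one. This gives tightness and uniform integrability of first moments on that part of the parameter space, so weak continuity upgrades to \(W_1\) continuity. Weak continuity is handled separately in three places:
\begin{itemize}
\item At finite \(\tau_0>0\), each moment \(M_{2k}(\tau)\) is a polynomial in \(\tau^{-1}\), so the moments are continuous in \(\tau\). The Carleman bound from Theorem~\ref{thm:critical-family} is uniform on compact subsets of \((0,\infty)\). Tightness plus moment determinacy then gives \(\mu_\tau\Rightarrow\mu_{\tau_0}\).
\item At \(\tau=\infty\), use Corollary~\ref{cor:critical-moments}.
\item At \(\tau=0\), Corollary~\ref{cor:critical-zero-endpoint} gives
\[
 W_1(\mu_\tau,\delta_0)=\int|x|\,\mu_\tau(\dd x)\le\Gamma(1/2)^2\tau^{1/2}\to0,
\]
taking \(q=1/2\).
\end{itemize}

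\emph{Uniform approximation \eqref{eq:uniform-discriminant-approximation}.} I would argue by contradiction. Suppose the supremum does not tend to zero. Then there are \(\varepsilon>0\), a sequence \(d_n\to\infty\) and parameters \(\alpha_n>0\) with
\[
 W_1\bigl(\widehat\mu_{d_n,\alpha_n},\Phi(d_n\alpha_n^2)\bigr)\ge\varepsilon.
\]
By compactness of \(\overline{\R}_+\), pass to a subsequence along which \(\tau_n:=d_n\alpha_n^2\to\tau\in\overline{\R}_+\). Corollary~\ref{cor:sqrt-d-phase-diagram} gives \(W_1(\widehat\mu_{d_n,\alpha_n},\Phi(\tau))\to0\), and continuity of \(\Phi\) gives \(W_1(\Phi(\tau_n),\Phi(\tau))\to0\). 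The triangle inequality then contradicts the choice of \(\varepsilon\).

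One point needs checking: the earlier theorems are stated for a full sequence \(d\to\infty\) with one \(\alpha_d\) per \(d\). Their proofs only use limits along the index sequence, so they apply verbatim along subsequences \(d_n\to\infty\). Alternatively, one can define \(\alpha_d\) on the missing degrees by interpolation, for example by setting \(\alpha_d=\sqrt{\tau_n/d}\) for \(d\) between \(d_n\) and \(d_{n+1}\), which keeps \(d\alpha_d^2\) convergent.

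\emph{Eigenphase approximation \eqref{eq:uniform-eigenphase-approximation}.} This follows immediately from \eqref{eq:eigenphase-W1-bound}, which gives \(W_1(\widehat\eta_{d,\alpha},\widehat\mu_{d,\alpha})\le C_{\mathrm{ph}}d^{-1/2}\) uniformly in \(\alpha\), combined with the triangle inequality and the discriminant statement.

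The main obstacle I expect is the \(W_1\)-continuity of \(\Phi\) at interior points \(\tau_0\in(0,\infty)\). This requires moment determinacy to hold uniformly near \(\tau_0\), and I would extract that from the explicit bound
\[
 M_{2k}(\tau)\le 4(2k)!\,(4+\tau^{-1})^{k-1},
\]
which is locally uniform in \(\tau\). The rest reduces to the earlier corollaries.
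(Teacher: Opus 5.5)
Your proposal is correct and matches the paper's proof essentially step for step. Continuity of $\Phi$ uses tightness from the common second moment, continuity of the moment polynomials, and determinacy at interior points; Corollary~\ref{cor:critical-moments} handles $\infty$, and the bound $W_1(\mu_\tau,\delta_0)\le\pi\sqrt\tau$ handles $0$. Injectivity comes from Corollary~\ref{cor:sqrt-d-phase-diagram}, and the same compactness-and-contradiction argument, followed by the triangle inequality with \eqref{eq:eigenphase-W1-bound}, gives both uniform limits. One small correction: at an interior point $\tau_0$ you need determinacy only of the target $\mu_{\tau_0}$, not ``uniform determinacy''; the locally uniform moment bounds serve only to give uniform integrability of moments along $\mu_{\tau_j}$.
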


\begin{proof}
Suppose \(\tau_j\to\tau\in(0,\infty)\). Each critical moment is continuous
in \(\tau\), and its polynomial formula in \(\tau^{-1}\), with nonnegative
coefficients, gives local uniform bounds. The common second moment gives tightness; the next
even-moment bound allows each moment to pass to a weak subsequential limit.
Moment determinacy in Theorem~\ref{thm:critical-family} therefore gives
\(\mu_{\tau_j}\Rightarrow\mu_\tau\). Uniform integrability of first moments
then gives \(W_1\) convergence.
At infinity, Corollary~\ref{cor:critical-moments} and the same
second-moment bound give \(W_1(\mu_\tau,\mu_{\mathrm{sc}})\to0\).
At zero, Corollary~\ref{cor:critical-zero-endpoint}, with \(q=1/2\),
gives the explicit bound
\[
 W_1(\mu_\tau,\delta_0)
 =\int |x|\,\mu_\tau(\dd x)\leq\pi\sqrt\tau.
\]
This proves continuity on \(\overline{\R}_+\). Injectivity was proved in
Corollary~\ref{cor:sqrt-d-phase-diagram}.

Suppose \eqref{eq:uniform-discriminant-approximation} fails. Then there exist
\(\varepsilon>0\), integers \(d_j\to\infty\), and \(\alpha_j>0\) such that
the corresponding distance is at least \(\varepsilon\). Compactness yields a
further subsequence with \(d_j\alpha_j^2\to\tau\in\overline{\R}_+\).
Corollary~\ref{cor:sqrt-d-phase-diagram} gives
\(\widehat\mu_{d_j,\alpha_j}\to\Phi(\tau)\) in \(W_1\). Continuity gives
\(\Phi(d_j\alpha_j^2)\to\Phi(\tau)\) in \(W_1\). The triangle inequality
contradicts the lower bound. Finally, the triangle inequality and
\eqref{eq:eigenphase-W1-bound} imply
\eqref{eq:uniform-eigenphase-approximation}.
\end{proof}

\section{Conclusions and further directions}\label{sec:conclusion}

We have studied a sparse self-adjoint random operator in which independent
Dirichlet vectors on vertex stars couple the incident weights. The operator
is the discriminant of a quenched reflection quantum walk, giving an exact
spectral relation between the two. At fixed degree, empirical spectral
measures along locally tree-like graph sequences converge almost surely to
a deterministic tree law with an all-order closed-walk formula. Rare paths
give spectrum and essential spectrum \([-1,1]\) almost surely, while both
amplitude-disorder endpoint laws have strictly smaller support.

For expected root spectral laws on the infinite tree, variance-scale
convergence is equivalent to convergence of \(d\alpha_d^2\) in
\([0,\infty]\). Finite positive parameter limits give the Bessel--Poisson
critical family; the two endpoints give subcritical collapse and the semicircle
law. Including the endpoints yields a \(W_1\) approximation uniform over
\(\alpha>0\). Each critical law is the expected root spectral measure of an
almost surely essentially self-adjoint Poisson-tree adjacency operator with
an explicit Bessel L\'{e}vy intensity. Below criticality, the Beta-product
energy family recovers the escaping second moment. Its convergence is
classified by the second parameter \(d\alpha_d\). A uniform quantitative
comparison transfers the variance-scale classification and approximation to
centered inherited eigenphases. The full unitary law remains dominated by
the structural atoms at \(\pm1\).

These limits lead to further questions about the underlying operators.
A simultaneous finite-graph limit with \(n,d\to\infty\) calls for
degree-uniform local approximation and moment estimates. The representation in
Theorem~\ref{thm:poisson-critical} also motivates studying convergence of the
rescaled edge-weight point processes and their associated operators or cavity
maps. At finite \(\alpha\), the explicit fractional moments and the
weighted-message recursion provide starting points for studying spectral type,
localization, and delocalization.

\appendix
\section{Finite-degree cavity recursion and its analytic fixed point}
\label{sec:cavity}

Removing an edge from a tree separates its two branches. This gives a
recursion for the Green function, but the Dirichlet constraint creates a
dependence that must remain in the recursion. We record it by weighting the
branch Green function with the mass assigned to the deleted edge.

We use the Herglotz resolvent convention
\[
 R_H(z)=(H-z)^{-1},
 \qquad z\in\C_+.
\]
With this convention, diagonal resolvent entries have positive imaginary
part. The finite-degree recursion below is the bounded-operator counterpart
of the critical Poisson recursion in Theorem~\ref{thm:poisson-rde}.

We work in the positive gauge on the tree, so that
\[
 a_{xy}=\sqrt{X_{x,y}X_{y,x}}.
\]
For an oriented edge \(x\to p\), remove \(\{x,p\}\) and let
\(\mathbb{T}_{x\to p}\) be the component containing \(x\). Denote by
\(T_{x\to p}\) the compression of \(T_\infty\) to
\(\ell^2(\mathbb{T}_{x\to p})\), and define
\[
 \Gamma_{x\to p}(z)
 =
 \left\langle
 \delta_x,(T_{x\to p}-z)^{-1}\delta_x
 \right\rangle.
\]
The coordinate \(X_{x,p}\) remains part of the Dirichlet mark at \(x\),
even though the edge to \(p\) has been deleted from the cavity operator.

\begin{lemma}[Herglotz convention and resolvent bounds]
For every oriented edge \(x\to p\) and every \(z\in\C_+\),
\[
 \Gamma_{x\to p}(z)\in\C_+,
 \qquad
 |\Gamma_{x\to p}(z)|\leq\frac1{\Imag z}.
\]
Moreover,
\[
 \Imag\Gamma_{x\to p}(z)
 =
 (\Imag z)
 \left\|(T_{x\to p}-z)^{-1}\delta_x\right\|^2,
\]
and
\[
 -\mathrm{i}y\,\Gamma_{x\to p}(\mathrm{i}y)
 \longrightarrow1
 \qquad(y\to\infty).
\]
\end{lemma}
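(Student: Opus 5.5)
The argument uses only that \(T_{x\to p}\) is a bounded self-adjoint operator on \(\ell^2(\mathbb T_{x\to p})\) and that \(\delta_x\) is a unit vector in that space. Boundedness holds because \(T_{x\to p}=P T_\infty P\), where \(P\) is the orthogonal projection onto \(\ell^2(\mathbb T_{x\to p})\); it is therefore a self-adjoint contraction, since \(T_\infty\) is one. Fix \(z\in\C_+\) and write \(R=(T_{x\to p}-z)^{-1}\), which exists because \(z\notin\R\supset\sigma(T_{x\to p})\).

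\emph{Imaginary-part identity.} Use the resolvent identity \(R-R^*=(z-\bar z)R^*R\), which follows from \(R^*=(T_{x\to p}-\bar z)^{-1}\). Since \(R^*-R=R^*\bigl[(T_{x\to p}-z)-(T_{x\to p}-\bar z)\bigr]R\), we get
\[
 2\mathrm{i}\,\Imag\langle\delta_x,R\delta_x\rangle
 =\langle\delta_x,(R-R^*)\delta_x\rangle
 =2\mathrm{i}(\Imag z)\|R\delta_x\|^2.
\]
This is the displayed formula. Positivity of the imaginary part follows from it, provided \(R\delta_x\neq0\); this holds because \(R\) is injective and \(\delta_x\neq0\). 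Hence \(\Gamma_{x\to p}(z)\in\C_+\).

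\emph{Resolvent bound.} By the spectral theorem, \(\|R\|=\operatorname{dist}(z,\sigma(T_{x\to p}))^{-1}\leq(\Imag z)^{-1}\). Cauchy--Schwarz then gives \(|\Gamma_{x\to p}(z)|\leq\|R\|\leq(\Imag z)^{-1}\).

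\emph{Large-\(y\) asymptotics.} The alternative route is to let \(\rho\) be the spectral measure of \(T_{x\to p}\) at \(\delta_x\), a probability measure on \([-1,1]\), and write \(-\mathrm{i}y\Gamma(\mathrm{i}y)=\int \frac{-\mathrm{i}y}{\lambda-\mathrm{i}y}\,\rho(\dd\lambda)\). The integrand is bounded by one and tends to one pointwise, so dominated convergence applies. The Neumann series gives a quantitative version directly, and this is the route I will write down. For \(y>1\), \(\|T_{x\to p}\|\le1<y\), so
\[
 R=-(\mathrm{i}y)^{-1}\bigl(I-T_{x\to p}/(\mathrm{i}y)\bigr)^{-1}.
\]
Therefore
\[
 -\mathrm{i}y\,\Gamma_{x\to p}(\mathrm{i}y)=\bigl\langle\delta_x,(I-T_{x\to p}/(\mathrm{i}y))^{-1}\delta_x\bigr\rangle,
\]
and the Neumann series bounds its distance from \(1\) by \(\sum_{n\ge1}y^{-n}=1/(y-1)\to0\).

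None of the steps is a real obstacle. The only point requiring care is making sure \(T_{x\to p}\) is genuinely self-adjoint and bounded on the branch space, so that all three facts are standard spectral-theorem consequences. The Dirichlet structure, including the fact that the mark \(X_{x,p}\) is retained at \(x\), plays no role here.
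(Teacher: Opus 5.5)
Your proposal is correct and follows essentially the same route as the paper: the resolvent identity evaluated at \(\delta_x\), the bound \(\|(T_{x\to p}-z)^{-1}\|\leq(\Imag z)^{-1}\) for the self-adjoint contraction \(T_{x\to p}\), and the Neumann expansion of \((I-T_{x\to p}/(\mathrm{i}y))^{-1}\) using \(\|T_{x\to p}\|\leq1\). Two details the paper leaves implicit are made explicit in your version: strict positivity of \(\Imag\Gamma_{x\to p}\) via injectivity of the resolvent, and the quantitative error \(1/(y-1)\).
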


\begin{proof}
The resolvent identity and self-adjointness give
\[
 \frac{(T_{x\to p}-z)^{-1}-(T_{x\to p}-\overline z)^{-1}}{2\mathrm{i}}
 =
 (\Imag z)
 (T_{x\to p}-z)^{-1}(T_{x\to p}-\overline z)^{-1}.
\]
Taking the matrix element at \(\delta_x\) proves the formula for the
imaginary part.  The norm bound follows from
\(\|(T_{x\to p}-z)^{-1}\|\leq(\Imag z)^{-1}\).  Finally,
\[
 (T_{x\to p}-\mathrm{i}y)^{-1}
 =
 -\frac1{\mathrm{i}y}
 \left(I-\frac{T_{x\to p}}{\mathrm{i}y}\right)^{-1},
\]
and \(\|T_{x\to p}\|\leq1\), which yields the asymptotic relation.
\end{proof}

The masses assigned to the child directions sum to \(1-X_{x,p}\), so
\(\Gamma_{x\to p}\) depends on the mass assigned to the deleted parent
edge. We retain this dependence by defining the weighted cavity message
\[
 \mathfrak m_{x\to p}(z)
 =X_{x,p}\Gamma_{x\to p}(z).
\]
Since \(X_{x,p}>0\) almost surely for \(\alpha>0\), this message satisfies
\[
 \mathfrak m_{x\to p}(z)\in\C_+,
 \qquad
 |\mathfrak m_{x\to p}(z)|\leq\frac1{\Imag z},
\]
and
\[
 -\mathrm{i}y\,\mathfrak m_{x\to p}(\mathrm{i}y)
 \longrightarrow X_{x,p}
 \qquad(y\to\infty).
\]

\begin{proposition}[Closed weighted-message recursion]\label{prop:cavity-rde}
Let \(\mathfrak m\) have the law of the canonical weighted message
\(\mathfrak m_{x\to p}\), and let
\[
 (X_0,X_1,\ldots,X_{d-1})
 \sim\operatorname{Dirichlet}(\alpha,\ldots,\alpha),
\]
where \(X_0\) is the mass assigned at $x$ to the parent direction and \(X_1,\ldots,X_{d-1}\) are the masses assigned to the child directions. Let \(\mathfrak m_1,\ldots,\mathfrak m_{d-1}\) be independent copies
of \(\mathfrak m\), independent of the Dirichlet vector. The following equality is an equality in law of random analytic
functions (and hence of all their finite-dimensional evaluations):
\[
 \mathfrak m(z)
 \stackrel{\mathrm d}{=}
 -\frac{X_0}
 {z+\displaystyle\sum_{j=1}^{d-1}X_j\mathfrak m_j(z)},
 \qquad z\in\C_+.
\]
For the root formula, take \(d\) independent copies
\(\mathfrak m_1,\ldots,\mathfrak m_d\) of \(\mathfrak m\) and an independent
vector \((X_1,\ldots,X_d)\sim\operatorname{Dirichlet}(\alpha,\ldots,\alpha)\).
Then
\[
 G_o(z)
 :=
 \left\langle\delta_o,(T_\infty-z)^{-1}\delta_o\right\rangle
 \stackrel{\mathrm d}{=}
 -\frac1
 {z+\displaystyle\sum_{j=1}^{d}X_j\mathfrak m_j(z)}.
\]
Consequently,
\[
 s_{d,\alpha}(z)
 :=
 \int_{\R}\frac{\mu_{d,\alpha}(\dd x)}{x-z}
 =\E G_o(z).
\]
\end{proposition}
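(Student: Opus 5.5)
The argument has three parts: a deterministic Schur-complement identity on the tree, an identification of laws from the independence of the Dirichlet marks, and an expectation step for the root spectral measure.

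\emph{Deterministic identity.} Fix a realization in the positive gauge (Lemma~\ref{lem:tree-gauge}) and an oriented edge \(x\to p\). The children of \(x\) are \(y_1,\ldots,y_{d-1}\). Removing \(x\) from \(\mathbb T_{x\to p}\) leaves the disjoint branches \(\mathbb T_{y_j\to x}\). The operator \(T_{x\to p}\) is bounded, and it decomposes as a one-dimensional block at \(x\), plus the direct sum of the branch compressions \(T_{y_j\to x}\), plus the coupling vector \(b=\sum_j a_{xy_j}\delta_{y_j}\).

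The Schur complement formula for bounded operators gives, for \(z\in\C_+\),
\[
 \Gamma_{x\to p}(z)=-\Bigl(z+\sum_{j=1}^{d-1}a_{xy_j}^2\,\Gamma_{y_j\to x}(z)\Bigr)^{-1}.
\]
No truncation is needed here because there are only finitely many branches and everything is bounded. Substituting \(a_{xy_j}^2=X_{x,y_j}X_{y_j,x}\) turns each term into \(X_{x,y_j}\,\mathfrak m_{y_j\to x}(z)\). Multiplying by \(X_{x,p}\) then yields
\[
 \mathfrak m_{x\to p}=-X_{x,p}\Bigl(z+\sum_j X_{x,y_j}\mathfrak m_{y_j\to x}\Bigr)^{-1}.
\]
This holds pointwise in \(\omega\) as an identity of analytic functions on \(\C_+\). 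The same computation at the root, with all \(d\) neighbours as children and no parent, gives the root formula.

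\emph{Identification of laws.} The key observation is that \(\mathfrak m_{y_j\to x}\) is a measurable function only of the marks at vertices of \(\mathbb T_{y_j\to x}\). This set includes the full mark at \(y_j\), and in particular \(X_{y_j,x}\). It excludes the mark at \(x\).

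The branch vertex sets are pairwise disjoint and disjoint from \(x\). Since the marks are independent across vertices, the messages \(\mathfrak m_{y_j\to x}\) are independent of one another and of the Dirichlet vector \(X_x=(X_{x,p},X_{x,y_1},\ldots)\).

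Each rooted marked branch is isomorphic, via a tree automorphism, to the canonical cavity tree. Permutation invariance of the Dirichlet law shows that relabelling which coordinate of a mark is the ``parent'' coordinate does not change the law. Hence each \(\mathfrak m_{y_j\to x}\) has the law of \(\mathfrak m\).

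Measurability as \(\mathfrak H\)-valued random elements follows from finite compressions, exactly as in the Poisson case. The right-hand side is a measurable map of \((X_x,\mathfrak m_1,\ldots,\mathfrak m_{d-1})\) into \(\operatorname{Hol}(\C_+)\). Therefore the pathwise identity transfers to equality in law of random analytic functions, and hence of all finite-dimensional evaluations.

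\emph{Stieltjes transform.} Finally, \(s_{d,\alpha}(z)=\E G_o(z)\) follows from the definition of \(\mu_{d,\alpha}\) as the expected root spectral measure. Fubini applies because of the bound \(|x-z|^{-1}\le(\Imag z)^{-1}\).

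The main point of care is the independence claim: one must check that \(X_{y_j,x}\), which enters through \(\mathfrak m_{y_j\to x}\), really belongs to the child's mark and not to \(x\)'s mark. This is exactly why the message is weighted by the parent-direction mass. I expect this bookkeeping, rather than any analytic issue, to be the crux.
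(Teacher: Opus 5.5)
Your proposal is correct and follows the paper's proof essentially step for step. Both arguments use the block Schur complement of \(T_{x\to p}-z\) over \(\mathbb C\delta_x\oplus\bigoplus_{j}\ell^2(\mathbb T_{y_j\to x})\), rewrite \(a_{x,y_j}^2\Gamma_{y_j\to x}=X_{x,y_j}\mathfrak m_{y_j\to x}\), multiply by \(X_{x,p}\), use independence of the branch messages from the mark at \(x\), repeat the computation with \(d\) branches at the root, and finish with Fubini via the resolvent bound. Your added remarks on automorphism and permutation invariance (why the branch messages share one law) and on measurability spell out points the paper leaves implicit.
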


\begin{proof}
Let \(y_1,\ldots,y_{d-1}\) be the children of \(x\) in
\(\mathbb T_{x\to p}\).  With respect to the orthogonal decomposition
\[
 \ell^2(\mathbb T_{x\to p})
 =
 \mathbb C\delta_x
 \oplus
 \bigoplus_{j=1}^{d-1}\ell^2(\mathbb T_{y_j\to x}),
\]
the operator \(T_{x\to p}-z\) has block form
\[
 \begin{pmatrix}
  -z & B^*\\
  B & \displaystyle\bigoplus_{j=1}^{d-1}(T_{y_j\to x}-z)
 \end{pmatrix},
\]
where
\[
 B\delta_x
 =\bigoplus_{j=1}^{d-1}a_{x,y_j}\delta_{y_j}.
\]
The lower-right block is invertible for \(z\in\mathbb C_+\), and the Schur
complement gives
\[
 \Gamma_{x\to p}(z)
 =
 -\left[
 z+\sum_{j=1}^{d-1}
 a_{x,y_j}^2\Gamma_{y_j\to x}(z)
 \right]^{-1}.
\]
Since
\[
 a_{x,y_j}^2\Gamma_{y_j\to x}(z)
 =X_{x,y_j}\mathfrak m_{y_j\to x}(z),
\]
multiplying by \(X_{x,p}\) gives the first recursion. The marked child
subtrees are independent and identically distributed, and independent of the
mark at \(x\). The same Schur complement with \(d\) branches gives the root
formula. If \(\rho_o\) denotes the root spectral measure, then
\[
 G_o(z)=\int_\R\frac{\rho_o(\dd x)}{x-z},
 \qquad \mu_{d,\alpha}=\E\rho_o.
\]
The resolvent bound justifies interchanging expectation and the spectral
integral, giving the final identity.
\end{proof}

\begin{remark}[Dirichlet neutrality]
Since \(X_0<1\) almost surely, we may write
\[
 Y=X_0,
 \qquad
 Z_j=\frac{X_j}{1-X_0},
 \quad 1\leq j\leq d-1.
\]
Then
\[
 Y\sim\operatorname{Beta}(\alpha,(d-1)\alpha),
 \qquad
 (Z_1,\ldots,Z_{d-1})
 \sim\operatorname{Dirichlet}(\alpha,\ldots,\alpha),
\]
and \(Y\) is independent of \(Z\).  Hence the recursion can equivalently be
written as
\[
 \mathfrak m(z)
 \stackrel{\mathrm d}{=}
 -\frac{Y}
 {z+(1-Y)\displaystyle\sum_{j=1}^{d-1}Z_j\mathfrak m_j(z)}.
\]
\end{remark}

We now formulate the recursion as an equation for the probability law of
a random analytic function. Recall the standard Borel space
\(\mathfrak H\) in \eqref{eq:herglotz-class}. For a probability law \(Q\) on \(\mathfrak H\),
define \(\mathcal F_{d,\alpha}(Q)\) to be the law of
\[
 \Phi_X(h_1,\ldots,h_{d-1})(z)
 =
 -\frac{X_0}
 {z+\displaystyle\sum_{j=1}^{d-1}X_jh_j(z)},
\]
where \(X=(X_0,\ldots,X_{d-1})\sim\operatorname{Dirichlet}(\alpha,\ldots,\alpha)\),
and \(h_1,\ldots,h_{d-1}\) are independent with law \(Q\), independently
of \(X\).  The map
\[
 (X,h_1,\ldots,h_{d-1})
 \longmapsto
 \Phi_X(h_1,\ldots,h_{d-1})
\]
is Borel measurable in the compact-open topology.  The denominator has
positive imaginary part, so
\(\Phi_X(h_1,\ldots,h_{d-1})\in\mathfrak H\).

\begin{theorem}[Unique analytic cavity fixed point]\label{thm:cavity-fixed-point}
The law \(Q_{d,\alpha}\) of the canonical weighted message
\(\mathfrak m_{x\to p}\) is a fixed point of \(\mathcal F_{d,\alpha}\).
It is the unique probability law on \(\mathfrak H\) satisfying
\[
 Q=\mathcal F_{d,\alpha}(Q).
\]
\end{theorem}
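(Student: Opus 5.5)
The proof has two parts: the fixed-point property, which comes from Proposition~\ref{prop:cavity-rde}, and uniqueness, which follows the Wasserstein-contraction argument of Theorem~\ref{thm:poisson-rde}. The Dirichlet mixed moments supply the contraction constant in place of the Poisson first moment.

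\emph{Existence.} First I will check that the canonical weighted message \(\mathfrak m_{x\to p}\) is a random element of \(\mathfrak H\).
\begin{itemize}
\item The preceding lemma gives \(\mathfrak m_{x\to p}(z)\in\C_+\) and \(|\mathfrak m_{x\to p}(z)|\leq(\Imag z)^{-1}\).
\item For measurability, approximate \(T_{x\to p}\) by finite compressions to balls. These are bounded operators converging strongly, so the diagonal resolvent entries converge pointwise.
\item The uniform resolvent bound and Vitali's theorem upgrade this to locally uniform convergence, so the limit is measurable into \(\operatorname{Hol}(\C_+)\). Multiplying by the measurable scalar \(X_{x,p}\) preserves measurability.
\end{itemize}
Proposition~\ref{prop:cavity-rde} then states exactly that the law \(Q_{d,\alpha}\) satisfies \(Q_{d,\alpha}=\mathcal F_{d,\alpha}(Q_{d,\alpha})\) as laws of random analytic functions. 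The independence it requires holds on the tree: the child branches \(\mathbb T_{y_j\to x}\) carry disjoint sets of i.i.d.\ marks, and these are independent of the mark at \(x\).

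\emph{Uniqueness.} Let \(Q,Q'\) be two fixed points in \(\mathfrak H\).
\begin{itemize}
\item Fix \(z_1,\ldots,z_r\) with \(\eta_\ell=\Imag z_\ell>1\). Let \(\nu,\nu'\) be the joint evaluation laws, which have bounded support, with cost \(c(u,v)=\sum_\ell|u_\ell-v_\ell|\). Take an optimal coupling \((h,h')\).
\item Use i.i.d.\ copies \((h_j,h_j')\) for the \(d-1\) children and a common Dirichlet vector \(X\), independent of the copies.
\item The denominators \(D_\ell=z_\ell+\sum_jX_jh_j(z_\ell)\) satisfy \(\Imag D_\ell\geq\eta_\ell\), because each \(h_j\in\C_+\) and \(X_j\geq0\). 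The same holds for \(D'_\ell\).
\item Hence
\[
 \left|\frac{X_0}{D_\ell}-\frac{X_0}{D'_\ell}\right|
 \leq\frac{X_0}{\eta_\ell^2}\sum_{j=1}^{d-1}X_j\,|h_j(z_\ell)-h'_j(z_\ell)|.
\]
\item Take expectations, using independence and \(\E[X_0X_j]=\alpha^2/\bigl(d\alpha(d\alpha+1)\bigr)=\alpha/\bigl(d(d\alpha+1)\bigr)\) from the Dirichlet moment identity. This gives
\[
 W_1(\mathcal F\nu,\mathcal F\nu')\leq\frac{(d-1)\alpha}{d(d\alpha+1)}\,\max_\ell\eta_\ell^{-2}\,W_1(\nu,\nu').
\]
\item The constant is below \(1/d<1\), so the map is a strict contraction. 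Since \(\nu,\nu'\) are fixed points, \(\nu=\nu'\).
\end{itemize}
Here \(\mathcal F\nu\) denotes the evaluation law induced at \(z_1,\ldots,z_r\). This is well defined because the evaluation of \(\Phi_X(h_1,\ldots,h_{d-1})\) at these points depends only on the joint evaluations of the \(h_j\) there.

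\emph{Conclusion.} The finite-dimensional laws of \(Q\) and \(Q'\) therefore agree on a countable dense subset of \(\{\Imag z>1\}\). Lemma~\ref{lem:analytic-law-identification} then gives \(Q=Q'\).

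The main obstacle I expect is the bookkeeping that makes the contraction legitimate on joint laws rather than laws at a single point. One must check that the evaluation law of \(\mathcal F_{d,\alpha}(Q)\) at finitely many points is a function of the evaluation law of \(Q\) at the same points, so that the recursion descends to a map on \(\mathcal P(\C^r)\). One must also check that independence of the Dirichlet vector from the coupled messages is preserved.
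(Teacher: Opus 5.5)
Your proposal is correct and follows essentially the same route as the paper: existence comes from Proposition~\ref{prop:cavity-rde}, and uniqueness comes from a synchronous coupling with a common Dirichlet vector, giving the contraction factor $\kappa_{d,\alpha}/\eta_\ell^2$ with $\kappa_{d,\alpha}=\E[X_0(1-X_0)]=(d-1)\alpha/(d(d\alpha+1))$ (your $\sum_j\E[X_0X_j]$), followed by Lemma~\ref{lem:analytic-law-identification}. The only difference is that you work on $\Imag z>1$, whereas the paper uses the larger half-plane $\Imag z>\sqrt{\kappa_{d,\alpha}}$; either choice suffices because the identification lemma only needs a countable set with an accumulation point in $\C_+$.
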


\begin{proof}
Existence follows from Proposition~\ref{prop:cavity-rde}. For uniqueness, set
\[
 \kappa_{d,\alpha}
 =\E[X_0(1-X_0)]
 =\frac{\alpha(d-1)}{d(d\alpha+1)}.
\]
Let \(Q\) and \(Q'\) be two fixed points.  Fix
\(z_1,\ldots,z_r\in\C_+\) with
\[
 \eta_\ell:=\Imag z_\ell>\sqrt{\kappa_{d,\alpha}}
 \qquad(1\leq\ell\leq r).
\]
Let \(\nu\) and \(\nu'\) be the joint evaluation laws of \(Q\) and
\(Q'\) at \(z_1,\ldots,z_r\).  On \(\C^r\), use the transportation cost
\[
 c(u,v)=\sum_{\ell=1}^r|u_\ell-v_\ell|
\]
and let \(W_1\) be the associated Wasserstein distance. The evaluation laws
have bounded support, so \(W_1\) is finite and an optimal coupling exists.
Choose an optimal coupling of \(\nu\) and \(\nu'\), and take \(d-1\)
independent copies. Use the same Dirichlet vector in both recursions,
independently of all message couplings. If \(D_\ell\) and \(D'_\ell\) are the
two denominators, then
\[
 |D_\ell|,|D'_\ell|\geq\eta_\ell
\]
and therefore
\[
 \left|
 -\frac{X_0}{D_\ell}
 +\frac{X_0}{D'_\ell}
 \right|
 \leq
 \frac{X_0}{\eta_\ell^2}
 \sum_{j=1}^{d-1}X_j
 |h_j(z_\ell)-h'_j(z_\ell)|.
\]
Taking expectations and using independence gives
\[
 \E\left|
 -\frac{X_0}{D_\ell}
 +\frac{X_0}{D'_\ell}
 \right|
 \leq
 \frac{\kappa_{d,\alpha}}{\eta_\ell^2}
 \E|h(z_\ell)-h'(z_\ell)|.
\]
Consequently, if \(\mathcal F_{\boldsymbol z}\) denotes the induced
map on joint evaluation laws, then
\[
 W_1\!\left(
 \mathcal F_{\boldsymbol z}(\nu),
 \mathcal F_{\boldsymbol z}(\nu')
 \right)
 \leq
 q\,W_1(\nu,\nu'),
 \qquad
 q=\max_{1\leq\ell\leq r}
 \frac{\kappa_{d,\alpha}}{\eta_\ell^2}<1.
\]
Since \(Q\) and \(Q'\) are fixed points, \(\nu\) and \(\nu'\) are fixed
points of \(\mathcal F_{\boldsymbol z}\).  Hence
\(W_1(\nu,\nu')\leq qW_1(\nu,\nu')\), and therefore \(\nu=\nu'\).

Thus all finite-dimensional evaluation laws of \(Q\) and \(Q'\) agree on
the nonempty open half-plane
\[
 \mathcal S=\{z\in\C_+:\Imag z>\sqrt{\kappa_{d,\alpha}}\}.
\]
Choose a countable dense set \(D_0\subset\mathcal S\).  Since \(D_0\) has
accumulation points in \(\C_+\),
Lemma~\ref{lem:analytic-law-identification} gives \(Q=Q'\).
\end{proof}

\begin{remark}[A uniform contraction half-plane]
For \(d\geq3\),
\[
 \kappa_{d,\alpha}<\frac{d-1}{d^2}\leq\frac29<\frac14.
\]
Thus the uniqueness argument works uniformly in \(d\) and \(\alpha\) on the
fixed half-plane \(\Imag z>1/2\).
\end{remark}

The fixed-point law can also be obtained from finite-depth branches.
Starting at the leaves gives the following sequence of laws, which
converges to the infinite-branch message law.

\begin{proposition}[Finite-depth construction]
Let \(Q_0\) be the law of the random analytic function
\[
 z\longmapsto-\frac{Y}{z},
 \qquad
 Y\sim\operatorname{Beta}(\alpha,(d-1)\alpha),
\]
and define recursively
\[
 Q_{R+1}=\mathcal F_{d,\alpha}(Q_R).
\]
Then \(Q_R\) is the law of the weighted cavity message
\(\mathfrak m_R\) on a depth-\(R\) truncated branch obtained by principal
compression, and
\[
 Q_R\Longrightarrow Q_{d,\alpha}
\]
in the topology of locally uniform convergence on \(\C_+\).
\end{proposition}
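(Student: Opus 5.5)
We split the proof into two parts. First we identify \(Q_R\) with the law of a finite-depth cavity message, by induction on \(R\). Then we show convergence to \(Q_{d,\alpha}\), by coupling the truncated branch with the infinite one and comparing resolvents directly. The contraction of Theorem~\ref{thm:cavity-fixed-point} is not needed for the convergence itself.

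\emph{Identification.} Let \(\mathbb T^{(R)}_{x\to p}\) be the set of vertices of \(\mathbb T_{x\to p}\) at distance at most \(R\) from \(x\). Let \(T^{(R)}_{x\to p}\) be the principal compression of \(T_\infty\) to this set, and set
\[
 \mathfrak m_R=X_{x,p}\langle\delta_x,(T^{(R)}_{x\to p}-z)^{-1}\delta_x\rangle .
\]
For \(R=0\) the compression is the zero operator on \(\C\delta_x\), so \(\mathfrak m_0=-X_{x,p}/z\). The Dirichlet marginal gives \(X_{x,p}\sim\operatorname{Beta}(\alpha,(d-1)\alpha)\), so \(\mathfrak m_0\) has law \(Q_0\).

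For the induction step we use the block decomposition from Proposition~\ref{prop:cavity-rde}. The depth-\((R+1)\) branch at \(x\) splits as \(\delta_x\) plus \(d-1\) depth-\(R\) branches rooted at the children \(y_j\). Each child branch is itself a principal compression and uses only the marks at its own vertices. The Schur complement therefore gives
\[
 \mathfrak m_{R+1}=-\frac{X_{x,p}}{z+\sum_j X_{x,y_j}\,\mathfrak m_{R,y_j\to x}} .
\]
The messages \(\mathfrak m_{R,y_j\to x}\) are i.i.d.\ with law \(Q_R\) and independent of the mark at \(x\). Hence \(\mathfrak m_{R+1}\) has law \(\mathcal F_{d,\alpha}(Q_R)\), as required. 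Each \(\mathfrak m_R\) lies in \(\mathfrak H\), because \(X_{x,p}\le1\) and the compression is self-adjoint.

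\emph{Convergence.} We couple everything on one environment. The random functions \(\mathfrak m_R\) and \(\mathfrak m=\mathfrak m_{x\to p}\) are then defined on the same probability space, and it suffices to show \(\mathfrak m_R\to\mathfrak m\) locally uniformly almost surely. Since \(T_{x\to p}\) is bounded with \(\|T_{x\to p}\|\le1\), the compressions \(P_RT_{x\to p}P_R\) converge strongly to \(T_{x\to p}\). Strong-resolvent convergence then gives pointwise convergence of the diagonal entry for each \(z\in\C_+\).

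Alternatively, there is a quantitative route. A Neumann expansion for \(\Imag z>1\), or the fact that the walk expansions agree up to length \(2R+1\), gives
\[
 |\mathfrak m_R(z)-\mathfrak m(z)|\le C(\Imag z)^{-1}\bigl(1/\Imag z\bigr)^{R}
\]
on the region \(\Imag z>1\). The resolvent bound \(|h|\le(\Imag z)^{-1}\) makes the family locally bounded. Vitali's theorem then upgrades convergence on a set with an accumulation point to locally uniform convergence on all of \(\C_+\).

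Almost sure convergence in the Polish space \(\operatorname{Hol}(\C_+)\) implies convergence in law, which gives \(Q_R\Rightarrow Q_{d,\alpha}\).

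The main obstacle I expect is the measurability and coupling bookkeeping in the identification step. We must check that the depth-\(R\) principal compression is exactly the tree with the deepest edges removed, so that the recursion closes with i.i.d.\ inputs. The leaf marks still carry their parent coordinate, which is where the Beta law of \(Q_0\) comes from.
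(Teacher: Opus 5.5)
Your proposal is correct and follows the same route as the paper. The Schur-complement recursion started from the leaf resolvent $-1/z$ identifies $Q_R$; then coupling the truncations with the infinite branch, strong (hence strong-resolvent) convergence of $P_RT_{x\to p}P_R$, the uniform bound $(\Imag z)^{-1}$, and Vitali's theorem give almost-sure locally uniform convergence, and hence $Q_R\Rightarrow Q_{d,\alpha}$. Your inductive identification spells out what the paper states in one line, and the optional Neumann-series rate is a harmless addition: its constant depends on $\Imag z$ and degenerates as $\Imag z\downarrow1$, but pointwise convergence on $\Imag z>1$ is all that Vitali's theorem needs.
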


\begin{proof}
The same block Schur complement gives the finite-depth recursion, with leaf
resolvent \(-1/z\). Couple the truncations with the infinite marked tree. Let
\(P_R\) project onto the branch truncated at depth \(R\), and set
\[
 T_R=P_RT_{x\to p}P_R,
\]
extended by zero to the full branch space. On each finitely supported vector,
\(T_R\) and \(T_{x\to p}\) agree for all sufficiently large \(R\). Since
\(\|T_R\|\leq1\), it follows that \(T_R\to T_{x\to p}\) strongly, and hence
in the strong-resolvent sense.

The cavity messages therefore converge pointwise for every
\(z\in\mathbb C_+\). On each compact subset of \(\mathbb C_+\), the bound
\(|\mathfrak m_R(z)|\leq(\operatorname{Im}z)^{-1}\) is uniform in \(R\).
Vitali's theorem upgrades this pointwise convergence to locally uniform
convergence. Under the coupling, convergence is almost sure and therefore
also holds in law.
\end{proof}

\section*{Statements and Declarations}

\paragraph{Funding.}
This work was supported by JSPS KAKENHI Grant Number 23K12507.

\paragraph{Competing interests.}
The authors have no relevant financial or non-financial interests to disclose.

\paragraph{Author contributions.}
Both authors contributed to the conception and development of the study,
the mathematical analysis, and the preparation of the manuscript.

\paragraph{Data availability.}
No datasets were generated or analyzed during the current study.

\paragraph{Code availability.}
No custom code is required to obtain the results reported in this article.

\begingroup
\small
\sloppy
\hbadness=10000
\emergencystretch=2em
\bibliographystyle{abbrvurl}
\bibliography{references}
\endgroup

\end{document}